\providecommand\@dotsep{5}\renewcommand{\listoftodos}[1][\@todonotes@todolistname]{%
  \@starttoc{tdo}{#1}}
\newcommand{\Ext}{\operatorname{Ext}}
\newcommand{\supp}{\operatorname{supp}}
\newcommand{\Hom}{\operatorname{Hom}}
\newcommand{\End}{\operatorname{End}}
\newcommand{\im}{\operatorname{im}}
\newcommand{\id}{{\operatorname{id}}}
\newcommand{\rk} {\operatorname{rk}\,}
\newcommand{\cat}[1]{\underline{#1}}
\newcommand{\cmod}[1]{\cat{\text{mod}\,#1}}
\newcommand{\Z}{\mathbb{Z}}
\newcommand{\N}{\mathbb{N}}
\newcommand{\C}{\mathbb{C}}
\newcommand{\Q}{\mathbb{Q}}
\newcommand{\mathscr}{\mathcal}
\definecolor{rdylgn60}{HTML}{D73027}
\definecolor{rdylgn61}{HTML}{FC8D59}
\definecolor{rdylgn62}{HTML}{FEE08B}
\definecolor{rdylgn63}{HTML}{D9EF8B}
\definecolor{rdylgn64}{HTML}{91CF60}
\definecolor{rdylgn65}{HTML}{1A9850}
\definecolor{rdylgn66}{HTML}{006837}
\definecolor{ca1}{HTML}{72497F}
\definecolor{ca2}{HTML}{5C7B78}
\definecolor{cb1}{HTML}{0862A0}
\definecolor{cb2}{HTML}{CD9B3E}
\definecolor{cc1}{HTML}{C55824}
\definecolor{cc2}{HTML}{288338}
\definecolor{cd}{HTML}{CF838A}
\definecolor{ce}{HTML}{FFAF3A}
\definecolor{cf}{HTML}{AFBF5A}
\definecolor{cg}{HTML}{7ACF99}
\definecolor{ch}{HTML}{8AB1CF}
\definecolor{ci}{HTML}{3B58CF}
\definecolor{cj}{HTML}{736163}
\definecolor{lightgray}{HTML}{DDDDDD}
\theoremstyle{plain}
\newtheorem{theorem}{Theorem}[section]
\newtheorem{proposition}[theorem]{Proposition}
\newtheorem{definition}[theorem]{Definition}
\newtheorem{example}[theorem]{Example}
\newtheorem{lemma}[theorem]{Lemma}
\newtheorem{corollary}[theorem]{Corollary}
\newtheorem{conjecture}{Conjecture}
\theoremstyle{remark}
\newtheorem{remark}{Remark}[section]
\numberwithin{equation}{section}
\tikzset{%
  spot/.style={color=black, thin, dashed},
  rline/.style={color=green, line width=2pt},
  sline/.style={color=blue, line width=2pt},
  tline/.style={color=red, line width=2pt},
  uline/.style={color=green, line width=2pt},
  line/.style={color=#1, line width=2pt},
  line/.default=blue,
  rdot/.style={color=green, thin, fill},
  sdot/.style={color=blue, thin, fill},
  tdot/.style={color=red, thin, fill},
  udot/.style={color=green, thin, fill},
  dot/.style={color=#1, thin, fill},
  dot/.default=blue
}
\newcommand{\namedses}[5]{{#3}\stackrel{#1}{\hookrightarrow}{#4}\stackrel{#2}{\twoheadrightarrow}{#5}}
\newcommand{\rt}{{\color{red} t}}
\newcommand{\rs}{{\color{blue} s}}
\newcommand{\TL}{{\rm TL}}
\newcommand{\JW}{{\rm JW}}
\newcommand{\TLcat}{{\mathscr{TL}}}
\newcommand{\gaussianquant}{\genfrac{[}{]}{0pt}{}}
\newcommand{\restr}{\mathord\downarrow}
\let\emph\relax 
\DeclareTextFontCommand{\emph}{\bfseries\em}
\author[R. A. Spencer]{Robert A. Spencer}
\address{Department of Pure Mathematics and Mathematical Statistics\\ University of Cambridge \\Cambridge\\ CB3 0WA \\ United Kingdom}
\email{ras228@cam.ac.uk}
\date{}
\title{The Modular Temperley-Lieb Algebra}
\begin{document}

\begin{abstract}
  We investigate the representation theory of the Temperley-Lieb algebra, $\TL_n(\delta)$, defined over a field of positive characteristic.
  The principle question we seek to answer is the multiplicity of simple modules in cell modules for $\TL_n$ over arbitrary fields.
  This provides us with the decomposition numbers for this algebra, as well as the dimensions of all simple modules.
  We obtain these results from purely diagrammatic principles, without appealing to realisations of $\TL_n$ as endomorphism algebras of $U_q(\mathfrak{sl}_2)$ modules.
  Our results strictly generalise the known characteristic zero theory of the Temperley-Lieb algebras.
\end{abstract}

\maketitle

\section*{Introduction}\label{sec:introduction}
The defining relations for the Temperley--Lieb algebras were first introduced by Temperley and Lieb in 1971, in order to study linear statistical mechanics problems of the ``Potts'' or ``ice'' type\cite{temperley_lieb_1971}.
The formulation given is in terms of transfer matrices which act on electron-spin state space and the operators are defined in terms of their action on the state space.
The defining relations of
\begin{eqnarray}
  u_i^2 &=& \left(\frac{r}{s} + \frac{s}{r}\right) u_i\\
  u_i u_{i \pm 1} u_i &=& u_i \\
  u_i u_j &=& u_j u_i \quad\quad\quad\quad\quad\quad |i-j|>1
\end{eqnarray}
for complex scalars $r$ and $s$, are given in terms of this action on what are now called ``cell modules''.

The Temperley--Lieb algebras not only admit a simple generators and relations description, but are a quotient of the Hecke algebra of type $A_n$, are considered a canonical example of a cellular algebra, and more recently can be phrased as a simple example of a $2$-category.
They have thus been extensively studied in the literature.
The representation theory is well understood in characteristic zero~\cite{goodman_wenzl_1993,ridout_saint_aubin_2014,westbury_1995},
and the algebras are known to be semi-simple for generic parameter (over any ring), but to have more intricate behaviour when specialised at a root of unity.
Their representation theory can be described by recursively defined ``path idempotents''~\cite{goodman_wenzl_1993} and the first critical Jones-Wenzl idempotents are known in a closed form~\cite{graham_lehrer_1998,morrison_2014}.
The interested reader is directed to the paper by Ridout and Saint-Aubin~\cite{ridout_saint_aubin_2014} for an excellent and comprehensive treatment of the representation theory in characteristic zero.
A recursive formula for the dimensions of the simple modules and Alperin diagrams for the projective indecomposable modules are provided.

More recently, attention has been given to the problem of determining the corresponding results over positive characteristic~\cite{andersen_2019,burrull_libedinsky_sentinelli_2019,cox_graham_martin_2003}.
Historically, the principal tool in this effort has been the Schur-Weyl duality that exists between $\TL_n(q+q^{-1})$ and $U_q(\mathfrak{sl}_2)$, as the representation theory of the latter is well understood~\cite{andersen_catharina_tubbenhauer_2018,cox_erdmann_2000,donkin_1998}.
This realises $\TL_n$ as the endomorphism ring of the $n$-fold tensor iterate of the standard representation $\End_{U_q(\mathfrak{sl}_2)}(V^{\otimes n})$.
Here the study of the Weyl and tilting modules of $U_q(\mathfrak{sl}_2)$ can be linked to the study of cell modules of $\TL_n$.

Recently, characteristic zero Temperley--Lieb algebras have found application to Soergel bimodule theory, where close cousins are to be found as degree zero morphisms between Bott-Samelson modules associated to the dihedral group~\cite{elias_2016}.
Here the degree $m$ of the dihedral group $D_{2m}$ and assumptions about the realisation place restrictions on the parameter of $\TL_n$.
The Jones-Wenzl idempotents describe the indecomposable Soergel bimodules.

If taken over positive characteristic, the theory describes a novel basis of the Hecke algebra associated to the dihedral group.
This ``$p$-canonical basis'' can be computed using local intersection forms.
The theory is explored by Jensen and Williamson for crystallographic Coxeter systems in~\cite{jensen_williamson_2015}.
The simplest non-crystallographic system to explore is the dihedral group and so exploration of the intersection forms (or Gram matrices of cell modules for $\TL_n$) provides a generalisation of the theory to new Coxeter systems.
In this case the desired result is the dimension of the simple $\TL_n$ modules over positive characteristic.

However, throughout, the Temperley--Lieb algebras persist in admitting a simple, generators and relations, inherently diagrammatic, definition.
In this way, they can be studied ``on their own'' without remit to endomorphism algebras of tensor iterates of $U_q(\mathfrak{sl}_2)$ modules or as part of a larger category of bimodules.
Further, this formalism is slightly more general as it makes no constraint on the parameter being of the form $q + q^{-1}$ (in particular, the parameter $\delta$ may not admit square roots of $\delta^2-4$ in the ring) and so the algebras can be defined over any pointed ring.
Further, considering the algebras diagrammatically highlights their cellular structure and opens the door to studying related cellular algebras (such as their Hecke algebras) from first principles.

In this paper, we answer the foremost question of modular representation theory, the decomposition numbers, of the Temperley--Lieb algebra over an arbitrary field, with arbitrary parameter.
This gives rise to a closed form for the dimensions of the simple modules for all $\TL_n$ modules and answers a question of when Jones-Wenzl idempotents descend to characteristic $p$ exactly.

\vspace{1em}

\noindent
The remainder of the paper is arranged as follows.
In \cref{sec:definitions_and_preliminary_results} we define the diagrammatic Temperley-Lieb category and its basic properties.
In \cref{sec:quantum_numbers} the ring over which the category is defined is considered, and quantum numbers defined.  These numbers will appear throughout the theory of the Temperley-Lieb algebra.

In \cref{sec:cellular,sec:cell-modules} we review the basic cellular theory and how it applies to the Temperley-Lieb case.  The known form of the Gram determinant is presented and interrogated in cases that will be useful in later proofs.
A truncation functor is introduced in \cref{sec:truncation} which reduces the problem to that of finding the multiplicity of the trivial module.
We can further restrict our attention by the character of $F_n$, a central element introduced by~\cite{ridout_saint_aubin_2014} that gives a partitioning of the modules slightly coarser than blocks.  We examine this in the modular case in \cref{sec:linkage}.

In \cref{sec:morphisms} we show that morphisms between cell modules are unique,up to scalar, and use this to show the main result (the decomposition numbers) in \cref{sec:decomposition}.
Inverting the decomposition matrix to obtain a closed form for the simple dimensions is performed in \cref{sec:dimensions}.
Finally, we examine some applications and prospects for the results obtained in \cref{sec:applications}.

\section{Definitions and Preliminary Results}\label{sec:definitions_and_preliminary_results}
We will define the Temperley--Lieb category diagrammatically.

\begin{definition}
  For natural numbers $n$ and $m$ of the same parity, an $(n,m)$-diagram is formed of $n+m$ points arranged in two vertical columns of $n$ and $m$ points each, paired by $(n+m)/2$ uncrossing lines in the strip between the columns.
  Two diagrams are equivalent if one may be continuously deformed into the other without moving endpoints.
\end{definition}
Examples of diagrams are:

\vspace{1em}
  \begin{tabular}{ccccc}
    \raisebox{6pt}{
      \begin{tikzpicture}[scale=0.5]
  \draw (0.0, 0.5) -- (0.0,3.5);
  \draw (2.0, -.5) -- (2.0,4.5);
  \foreach \i in {0,...,2} {
    \draw[fill] (0, \i+1) circle (0.1);
  }
  \foreach \i in {0,...,4} {
    \draw[fill] (2, \i) circle (0.1);
  }
  \draw[very thick] (0, 1) to[out=0, in=180] (2,0);
  \draw[very thick] (0, 2) to[out=0, in=180] (2,3);
  \draw[very thick] (0, 3) to[out=0, in=180] (2,4);

  \draw[very thick] (2, 1) to[out=180, in=180] (2,2);
\end{tikzpicture}
    }
    &&
    \raisebox{12pt}{
      \begin{tikzpicture}[scale=0.5]
  \draw (0.0, -.5) -- (0.0,3.5);
  \draw (2.0, -.5) -- (2.0,3.5);
  \foreach \i in {0,...,3} {
    \draw[fill] (0, \i) circle (0.1);
    \draw[fill] (2, \i) circle (0.1);
  }
  \draw[very thick] (0, 0) to[out=0, in=180] (2,2);
  \draw[very thick] (0, 1) to[out=0, in=0] (0,2);
  \draw[very thick] (0, 3) to[out=0, in=180] (2,3);

  \draw[very thick] (2, 0) to[out=180, in=180] (2,1);
\end{tikzpicture}
    }
    &&
    \raisebox{0pt}{
      \begin{tikzpicture}[scale=0.5]
  \draw (0.0, -.5) -- (0.0,5.5);
  \draw (2.0, -.5) -- (2.0,5.5);
  \foreach \i in {0,...,5} {
    \draw[fill] (0, \i) circle (0.1);
    \draw[fill] (2, \i) circle (0.1);
  }
  \draw[very thick] (0, 0) to[out=0, in=0] (0,1);
  \draw[very thick] (0, 2) to[out=0, in=0] (0,3);
  \draw[very thick] (0, 4) to[out=0, in=0] (0,5);

  \draw[very thick] (2, 0) to[out=180, in=180] (2,3);
  \draw[very thick] (2, 2) to[out=180, in=180] (2,1);
  \draw[very thick] (2, 4) to[out=180, in=180] (2,5);
\end{tikzpicture}
    }
    \\
    An epic $(3,5)$-diagram&
    \hspace{6em}&
    A $(4,4)$-diagram&
    \hspace{6em}&
    A $(6,6)$-diagram\\
  \end{tabular}
\vspace{1em}

For an $(n,m)$-diagram, the $n$ points on the left are known as \emph{source sites} and those on the right as \emph{target sites}.
When counting sites, we will enumerate them from top to bottom.
The number of source sites a diagram connects to target sites is the propagation number of the diagram.
Diagrams for which the propagation number is maximal (i.e.\ the larger of $n$ and $m$) are either \emph{monic} if $m \leq n$ or \emph{epic} if $m \ge n$.

The monic $(n,n-2)$-diagram for which source site $i$ is connected to source site $i+1$ is known as the \emph{simple cap} at $i$.  The corresponding $(n-2, n)$-diagram is known as the \emph{simple cup}.  We will denote the simple $(2,0)$-cap by $\cap$ and the $(0,2)$-cup by $\cup$.

We now form a category from these diagrams.
Let $R$ be a commutative ring and $\delta$ a distinguished element.
We will always be assuming that $R$ is commutative.
We may refer to the ``pointed ring'' $(R,\delta)$ or simply $R$ if the value of $\delta$ is clear.
\begin{definition}
  The Temperley--Lieb category, $\TLcat^R(\delta)$ is an abelian category over $R$ with object set $\{\underline{n} : n\in \N\}$.
  The space of morphisms $\underline{n} \to \underline{m}$ has basis given by $(n,m)$-diagrams.
  Composition is defined on this basis by identification of the appropriate source and target sites with each closed loop resolving to a linear factor of $\delta$.
\end{definition}
Throughout, where $R$ or $\delta$ are unambiguous or arbitrary, we will omit them from the notation.

To illustrate morphism composition, we present a composition of a (5,7)-diagram in $\Hom_{\TLcat}(\underline{5}, \underline{7})$ with a (7,3)-diagram in $\Hom_{\TLcat}(\underline{7}, \underline{3})$.
The resultant morphism from $\underline{5} \to \underline{3}$ is computed by identifying the seven points on the right of the first morphism with the seven from the left of the second and resolving the single resulting closed loop to a factor of $\delta$.

\vspace{1em}
\begin{center}
  \begin{tikzpicture}[scale=0.5]
    \draw (0, -0.5) -- (0,4.5);
    \draw (2, -1.5) -- (2,5.5);
    \draw (4, -1.5) -- (4,5.5);
    \draw (6, .5) -- (6,3.5);
    \draw (8.75, -0.5) -- (8.75,4.5);
    \draw (10.75, .5) -- (10.75,3.5);
    \foreach \i in {0,...,4} {
      \draw[fill] (0, \i) circle (0.1);
      \draw[fill] (8.75, \i) circle (0.1);
    };
    \foreach \i in {0,...,6} {
      \draw[fill] (2, \i-1) circle (0.1);
      \draw[fill] (4, \i-1) circle (0.1);
    };
    \foreach \i in {0,...,2} {
      \draw[fill] (6, \i+1) circle (0.1);
      \draw[fill] (10.75, \i+1) circle (0.1);
    };
    \node at (3, 2) {$\circ$};
    \node at (7, 2) {$=$};
    \node at (8., 2) {$\delta$};

    \draw[very thick] (2, 0) to[out=180, in=180] (2,-1);
    \draw[very thick] (4, 0) to[out=0, in=0] (4,-1);
    \draw[very thick] (6, 2) to[out=180, in=180] (6,1);

    \draw[very thick] (0, 0) to[out=0, in=180] (2,1);
    \draw[very thick] (0, 2) to[out=0, in=0] (0,3);
    \draw[very thick] (0, 1) to[out=0, in=180] (2,4);
    \draw[very thick] (0, 4) to[out=0, in=180] (2,5);
    \draw[very thick] (2, 2) to[out=180, in=180] (2,3);

    \draw[very thick] (4, 1) to[out=0, in=0] (4,2);
    \draw[very thick] (4, 4) to[out=0, in=0] (4,5);
    \draw[very thick] (4, 3) to[out=0, in=180] (6,3);

    \draw[very thick] (8.75, 0) to[out=0, in=180] (10.75,3);
    \draw[very thick] (8.75, 1) to[out=0, in=0] (8.75,4);
    \draw[very thick] (8.75, 2) to[out=0, in=0] (8.75,3);
    \draw[very thick] (10.75, 2) to[out=180, in=180] (10.75,1);
  \end{tikzpicture}
\end{center}
\vspace{1em}

It is clear that there are no non-zero morphisms between objects $\underline{n}$ and $\underline{m}$ when $n\not\equiv_2 m$.
Further, monic diagrams are monomorphisms and epic diagrams epimorphisms in this category.

This category, $\TLcat$ is actually monoidal, where the tensor product $- \otimes -$ sends $\underline{n}\otimes \underline{m} = \underline{n+m}$ and acts on morphisms by vertical concatenation.

The \emph{support} of a morphism is the set of diagrams appearing with non-zero corresponding coefficient if the morphism is written in the diagram basis.

\begin{definition}
  The Temperley--Lieb algebra on $n$ sites, $\TL^R_n(\delta)$ is $\End_{\TLcat^R(\delta)}(\underline{n})$.
\end{definition}
As previously mentioned, if $R$ or $\delta$ are understood or arbitrary, we will omit them from the notation.  It is clear that $\TL_n$ is a unital algebra with unit the unique diagram with propagation number $n$.
It admits a description in terms of generators and relations as an algebra with generators $\{u_i : 1\le i < n\}$ subject to
\begin{align*}
  u_i^2 &= \delta u_i\\
  u_iu_j &= u_j u_i & |i - j| \ge 2\\
  u_iu_{i\pm1}u_i &= u_i
\end{align*}
Here, $u_i$ corresponds to the diagram with a simple cup and simple cap at $i$.
\vspace{1em}
\begin{center}
  \begin{tabular}{c}
    \begin{tabular}{l}
      \begin{tikzpicture}[scale=0.5]
  \draw (0.0, -.5) -- (0.0,4.5);
  \draw (2.0, -.5) -- (2.0,4.5);
  \foreach \i in {0,...,4} {
    \draw[fill] (0, \i) circle (0.1);
    \draw[fill] (2, \i) circle (0.1);
  }
  \draw[very thick] (0, 0) to[out=0, in=180] (2,0);
  \draw[very thick] (0, 1) to[out=0, in=180] (2,1);
  \draw[very thick] (0, 4) to[out=0, in=180] (2,4);

  \draw[very thick] (0, 2) to[out=0, in=0] (0,3);
  \draw[very thick] (2, 2) to[out=180, in=180] (2,3);
\end{tikzpicture}
    \end{tabular}
    \\
    The $(5,5)$-diagram, $u_2$\\
  \end{tabular}
\end{center}
\vspace{1em}

For diagram $\underline{n}\to\underline{m}$, the propagation number is also the minimum $k$ such that the corresponding morphism factors through $\underline{k}$.
The propagation number of a morphism is the maximum of the propagation numbers of the diagrams in its support.
As such composition cannot increase propagation number and so we have a strict filtration of ideals in $\TL_n$,
\begin{equation}\label{eq:tln_filt}
  \TL_n = \mathcal{F}^n(\TL_n) \supset
  \mathcal{F}^{n-2}(\TL_n) \supset
  \cdots \supset
  \mathcal{F}^{n- 2\lfloor n/2\rfloor}(\TL_n) \supset 0,
\end{equation}
where $\mathcal{F}^i(\TL_n)$ contains all morphisms of propagation number at most $i$.

\begin{definition}
  A standard $(n,m)$-diagram is an $(n,m)$-diagram which is either monic or epic.
  That is, a diagram is standard if it has maximal propagation number.
\end{definition}

The category $\TLcat$ is generated by standard diagrams and we have a ``Robinson-Schensted type'' correspondence.
\begin{lemma}\label{lem:diagram-tableaux}
  Let $r = (n-m)/2$ for $n\ge m$.  There is a bijection between $(n,m)$ standard diagrams and standard $(n-r,r)$ Young tableaux.  As such, the number of such diagrams is
  \begin{equation}
    \binom{n}{r} - \binom{n}{r-1}.
  \end{equation}
\end{lemma}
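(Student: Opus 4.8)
The plan is to encode a standard $(n,m)$-diagram by a binary string and then feed that string through two classical bijections. Since $n\ge m$, such a diagram is monic: it carries $r=(n-m)/2$ cups among the $n$ source sites and $m$ through-strands to the $m$ target sites. Reading the source sites from top to bottom, record for each one a single bit: $1$ if it is the \emph{lower} endpoint of a cup (a ``closer''), and $0$ otherwise. This produces a subset $T\subseteq\{1,\dots,n\}$ with $|T|=r$. The first and main step is to show this data determines the diagram. Processing the source sites top to bottom with a stack — push a non-closer, and on a closer pop the stack and join the popped site to the current one by a cup — non-crossing of the cups forces each closer to be paired with the most recently opened unmatched opener, so the cup pairing is uniquely recovered; the $m$ sites remaining on the stack must be the through-strand endpoints, because an unmatched opener could never close. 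Conversely this procedure builds a genuine planar diagram from any such $T$: the stack discipline guarantees that no cup ever encloses a site left on the stack (a site below an unpopped entry can never be popped), which is exactly what planarity forbids, and the through-strands are then forced by non-crossing to join the remaining source sites in order to the target sites from top to bottom.

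Next I would identify which subsets $T$ occur. The stack procedure succeeds (never pops an empty stack) precisely when every initial segment $\{1,\dots,k\}$ contains at least as many non-closers as closers; writing $T=\{t_1<\cdots<t_r\}$ this is the ballot condition $t_j\ge 2j$ for all $j$. Equivalently, the length-$n$ string of symbols ($-1$ for a closer, $+1$ otherwise) is a lattice path from $0$ to $m$ with steps $\pm1$ staying weakly positive, whose height after $k$ steps counts the strands pending just below source site $k$.

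Then I would match these ballot sequences with standard Young tableaux of shape $(n-r,r)$ (a valid two-row shape since $n-r=(n+m)/2\ge r$ and $(n-r)+r=n$): place the label $k$ in the second row when step $k$ is a down-step (i.e.\ $k\in T$), and in the first row otherwise, filling each row in increasing order. The inequality $t_j\ge 2j$ is exactly the statement that in each column the second-row entry strictly exceeds the first-row entry, so this is a bijection between ballot sequences and standard tableaux of shape $(n-r,r)$. Finally, counting such tableaux by the hook length formula applied to $(n-r,r)$ — equivalently, counting the lattice paths above by the reflection principle — yields $\binom{n}{r}-\binom{n}{r-1}$, completing the proof; the epic case $n\le m$ follows by reflecting the diagrams left-to-right.

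The step I expect to be the real obstacle is the first one: verifying that a standard diagram is faithfully recorded by the positions of its cup-closers alone, i.e.\ that once $T$ is fixed, planarity leaves no freedom in how the closers pair into cups, which remaining source sites become through-strand endpoints, and how those through-strands are routed. Once that rigidity is established, everything else is a routine dictionary between two-row tableaux, ballot sequences, and Dyck-type lattice paths.
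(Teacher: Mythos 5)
Your proposal is correct and follows essentially the same route as the paper's proof: record the positions of the cup-closing source sites, observe that planarity makes this data determine the diagram, identify the admissible sets via the ballot condition with standardness of the two-row tableau, and count by the hook length formula. You spell out the rigidity step (the stack argument) that the paper leaves as an assertion, but the bijection and the counting are the same.
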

\begin{proof}
  Recall we label the sites from the top down.
  Consider those source sites that are connected to another source site above them.
  Clearly there are $r$ of these closing sites, and they uniquely determine the diagram.

  The bijection sends this diagram to the tableau with second row containing the labels of these closing sites.
  It is thus necessary only to show that the resultant tableau is standard as a counting argument (such as the hook-length formula for standard tableaux) shows the numerical result.

  Note that the top row of the tableau contains all the site labels of sites either connected by through wires or connected to source sites below them.
  In order for the diagram to be planar, we require that the $i$-th closing site has label at least $i$.
  However, for a two part partition, this is equivalent to the condition that the tableaux under the above bijection is standard.
\end{proof}

The category $\TLcat$ is naturally self-dual by the functor $\iota$ which sends a diagram $\underline{n} \to \underline{m}$ to its ``vertical reflection'' $\underline{m} \to \underline{n}$.
This descends to an antiautomorphism on $\TL_n$ also given by reflection about the vertical axis.
We call such a morphism an \emph{involution} and it equips all $\TL_n$-modules with a concept of a dual:
\begin{equation}
  M^* = \Hom_{\TL_n}(M, R)
\end{equation}
where the action on the homomorphism space is $x\cdot\phi(m) = \phi(\iota x \cdot m)$.

\begin{remark}
  Note that the regular $\TL_n$ module is \emph{not} necessarily self-dual.
  Indeed, we will later show that if $\delta =1$ and $R$ is any field, then $\TL_3$ decomposes as the direct sum of two modules: a self-dual, 3--dimensional projective cover of the trivial module and another module which is 2--dimensional and not self-dual.
\end{remark}

We introduce Dirac ``bra-ket'' notation for diagrams.
Let $x$ and $y$ be epic diagrams $\underline{n} \to \underline{m}$ where $n \ge m$.
We will then denote the corresponding morphisms in $\Hom_{\TLcat}(\underline{n}, \underline{m})$ by kets $|x\rangle$ and $|y\rangle$.
The image of these morphisms under $\iota$ are $\langle x|$ and $\langle y|$.
Thus we have $|x\rangle\langle y| \in \End_\TLcat(\underline{n})$ and $\langle x|y \rangle \in \End_\TLcat(\underline{m})$.
Care should be taken by those familiar with Dirac notation not to confuse the morphism $\langle - | - \rangle$ with the bilinear form $\langle -, - \rangle$ introduced later.

%

\section{Quantum Numbers}\label{sec:quantum_numbers}
\subsection{Introduction}
We briefly review the relevant theory of quantum numbers, or Chebyshev polynomials of the second kind.
In what follows in this section, $\delta$ is an indeterminate unless otherwise specified.
Alternatively, $\delta$ can be considered as an element of the pointed ring $(\Z[\delta],\delta)$.

\begin{definition}
  The quantum numbers, $[n]$ for $n\in \Z$, are polynomials in $\Z[\delta]$ that satisfy $[0] = 0$, $[1] = 1$ and $\delta[n] = [n+1] + [n-1]$.
\end{definition}

Note that the recurrence relations $[n\pm1] = \delta [n] - [n\mp1]$ shows that the polynomials are well defined for all $n\in \Z$ and indeed $[-n] = -[n]$.

The first few quantum numbers are thus
\vspace{1em}
\begin{center}
  \begin{tabular}{clccl}
    \toprule
    $n$      & $[n]$                &\hspace{3em}  & $n$ & $[n]$ \\
    \midrule
    0 & $0$                  &  & 5 & $\delta^4 - 3\delta^2 + 1$ \\
    1 & $1$                  &  & 6 & $\delta^5 - 4\delta^3 + 3\delta$ \\
    2 & $\delta$             &  & 7 & $\delta^6 - 5\delta^4 + 6\delta^2 - 1$ \\
    3 & $\delta^2 - 1$       &  & 8 & $\delta^7 - 6\delta^5 + 10\delta^3 - 4\delta$ \\
    4 & $\delta^3 - 2\delta$ &  & 9 & $\delta^8 - 7\delta^6 + 15\delta^4 -10\delta^2 + 1$ \\
    \bottomrule
  \end{tabular}
\end{center}
\vspace{1em}

The coefficients of the quantum numbers, $[n] = \sum_{i=0}^{n-1}c_{n,i}\delta^i$ obey the relation
\begin{equation}
c_{n,i} = c_{n-1,i-1} - c_{n-2,i}
\end{equation}
and form a ``half Pascal triangle''.  A closed form is given by
\begin{equation}
  c_{n, n-1-2i} = {(-1)}^i \binom{n-1-i}{i}.
\end{equation}

Quantum numbers are a ``$q$-analogue'' of the integers in the following way.
Consider the ring of symmetric Laurent polynomials in indeterminate $q$ over the integers.
This ring is isomorphic to $\Z[\delta]$ by $\delta \mapsto q+q^{-1}$.
Under this identification,
\begin{equation}\label{eq:q_quant_def}
  [n] = q^{n-1} + q^{n-3} + \cdots + q^{-n+3} + q^{-n+1} = \frac{q^n - q^{-n}}{q-q^{-1}}
\end{equation}
and we see that under the specification $q = 1$ (or equivalently under $\delta = [2] = 2$), quantum numbers ``specialise'' to normal numbers so that $[n] = n$.
Where necessary we will subscript $\delta$ or $q$ to specify the variable in use.
This ``$q$-formulation'' makes it clear that if $n \mid m$ then $[n] \mid [m]$.

Using the definition in \cref{eq:q_quant_def}, we can state and prove the following.
\begin{lemma}\label{lem:quantum_shatter}
  For any $m$ and $\ell$,
  ${[m\ell]}_q = {[\ell]}_q{[m]}_{q^\ell}$.
\end{lemma}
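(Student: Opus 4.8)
The plan is to compute both sides directly from the closed form in \cref{eq:q_quant_def}, treating $q$ as an indeterminate so that the identity holds in $\Z[q^{\pm1}]$. Recall that \cref{eq:q_quant_def} gives $[n]_q = (q^n - q^{-n})/(q - q^{-1})$, valid as an element of the symmetric Laurent polynomial ring. First I would substitute $n = m\ell$ on the left to get $[m\ell]_q = (q^{m\ell} - q^{-m\ell})/(q - q^{-1})$. The key algebraic observation is that $q^{m\ell} - q^{-m\ell} = (q^\ell)^m - (q^\ell)^{-m}$, which is precisely the numerator of $[m]$ evaluated at the parameter $q^\ell$ in place of $q$.

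So the natural step is to write $[m]_{q^\ell} = \bigl((q^\ell)^m - (q^\ell)^{-m}\bigr)/(q^\ell - q^{-\ell})$, and then multiply by $[\ell]_q = (q^\ell - q^{-\ell})/(q - q^{-1})$. The factor $q^\ell - q^{-\ell}$ cancels, leaving exactly $(q^{m\ell} - q^{-m\ell})/(q - q^{-1}) = [m\ell]_q$. That is the whole argument: it is a one-line cancellation once the closed forms are in place.

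The only subtlety worth flagging — and the step I would treat most carefully — is that $[m]_{q^\ell}$ means the polynomial $[m]$ in the variable $\delta$, specialised at $\delta = q^\ell + q^{-\ell}$ (equivalently, the symmetric Laurent polynomial identity of \cref{eq:q_quant_def} with $q$ replaced by $q^\ell$). One should check this substitution is legitimate: since $[m]_q \in \Z[\delta]$ under $\delta \mapsto q + q^{-1}$, and $q^\ell + q^{-\ell} = [2]_{q^\ell}$ is itself a legitimate value of $\delta$, the evaluation is well-defined, and \cref{eq:q_quant_def} applies verbatim with $q \rightsquigarrow q^\ell$. Because everything lives in the integral domain $\Z[q^{\pm1}]$, dividing by $q^\ell - q^{-\ell}$ in intermediate steps is harmless (it divides the relevant numerators exactly), so no genuine obstacle arises; the lemma is essentially a bookkeeping consequence of the geometric-series shape of quantum numbers.
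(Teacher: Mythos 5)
Your proposal is correct and matches the paper's proof essentially verbatim: both compute $[m\ell]_q$ from the closed form $(q^{m\ell}-q^{-m\ell})/(q-q^{-1})$, insert the factor $(q^\ell-q^{-\ell})/(q^\ell-q^{-\ell})$, and note that the calculation in the fraction field descends to an identity in the symmetric Laurent polynomial ring. No gaps.
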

\begin{proof}
  This is a simple calculation in the subring of symmetric Laurent polynomials in $q$:
  \begin{equation}\label{eq:calcml}
    {[m\ell]}_q = \frac{q^{m\ell}-q^{-m\ell}}{q - q^{-1}}
    = \frac{q^{\ell} - q^{-\ell}}{q - q^{-1}}
      \frac{q^{m\ell} - q^{-m\ell}}{q^\ell - q^{-\ell}}
      = {[\ell]}_q{[m]}_{q^\ell}.
  \end{equation}
\end{proof}
Note that this is a relationship between polynomials and thus has a version in terms of $\delta$ notation:
\begin{equation}
  {[m\ell]}_\delta = {[\ell]}_\delta{[m]}_{[\ell+2]-[\ell]}
\end{equation}
Here the subscripts dictate the value of $\delta$ to use when evaluating the quantum number (not the value of $q$).
This is important as it is an equation in $\delta$ and makes no recourse to values such as $q^\ell - q^{-\ell}$ which may vanish under specialisation.

\begin{definition}
  The quantum factorial, $[n]!{}$ is defined for $n \in \N$ by $[0]! = 1$ and $[n]! = [1][2]\cdots[n]$.
\end{definition}

\subsection{Normalisation}
Readers familiar with quantum numbers may be accustomed to the normalisation
\begin{equation}
  \widetilde{[ n]}_q = 1 + q + \cdots + q^{n-1}.
\end{equation}
The two definitions are related by
\begin{equation}
  [n] = q^{-n+1}\widetilde{[n]}_{q^2}.
\end{equation}
Each has their advantage and place.
For example, $\widetilde{[n]}_q!{}$ counts the number of elements of $\mathfrak{S}_n$, stratified by inversion number, while the normalisation $[n]$ will be crucial to the sequel.

\subsection{Specialisation}
Let us now fix in mind a domain $R$ and some element of $R$, which we will also call $\delta$.
The object $(\Z[\delta],\delta)$ is an initial object in the category of pointed rings, so we may consider the canonical images of elements of $\Z[\delta]$ in the pointed ring $(R,\delta)$.
The context should make it clear if we refer to an element of $R$ or $\Z[\delta]$.
Thus we may consider $[n]$ as an element of $R$ which is polynomial in $\delta \in R$.
This highlights a reason for our formulation in terms of $\delta$ as opposed to $q$: we make no assumption that any elements in $R$ have inverses.

We are now interested in the vanishing of $[n]$.
Suppose $[n]$ does vanish for some $n \neq 0$.
Since $[n] \mid [m]$ whenever $n \mid m$, there must be a least positive $\ell$ such that $[\ell] = 0$ and all zeros occur at multiples of $\ell$.
If $[n]$ never vanishes in $R$, we will take $\ell = \infty$.
This is known as the \emph{quantum characteristic} of the pair $(R,\delta)$.

A simple inductive proof shows that $[\ell - n] = -[\ell + n]$ and in particular, $[2\ell -1] = -1$.
Further, $[\ell - k] = [\ell - 1][k]$, and so $[\ell - 1] = \pm 1$.

Specialising to a ring $R$ and element $\delta$ introduces two ``twists'' as we have seen.
The natural number $\ell$ is the ``quantum-torsion'' of this situation, and there is a smallest natural $p$ such that $p\cdot 1 = 0$ in $R$.
If $R$ is a domain (as will often be the case), $p$ is prime.
We say that the situation over such a pointed ring is
\begin{itemize}
  \item ``semisimple'' if $\ell = \infty$
  \item ``characteristic zero'' if $\ell < \infty$ but $p = \infty$
  \item ``positive characteristic'' if $\delta = 2$ and so $\ell = p < \infty$ and $[n] = n$
  \item ``mixed characteristic'' in other cases
\end{itemize}

Of considerable importance will be the $(\ell, p)$-digits of natural numbers.
When an pointed ring $R$ is in mind, the $(\ell, p)$-digits of a number $n\in\N$ are those naturals $n_0, n_1, \cdots$ such that
\begin{equation}\label{eq:l_p_digits}
  n = n_0 + n_1 \ell + n_2 \ell p + n_3 \ell p^2 + \cdots + n_k \ell p^{k-1}
\end{equation}
for $0\le n_0 < \ell$ and $0 \le n_i < p$ for $i > 0$.
If we write $\ell p^{i-1} = p^{(i)}$ with the understanding that $p^{(0)} = 1$, then we may write $n = \sum_{i=0}^k n_i p^{(i)}$.
We may also simply write out the digits as $n = [n_k, n_{k-1}, \ldots, n_0]_{p,\ell}$.

\subsection{Difference Series}\label{sec:quantum:difference}
Let $R$ be a domain.
We now consider the series given by $\Delta_0 = 2$, $\Delta_1 = \delta$ and obeying the same recurrence relation as the quantum numbers.
This series arises as $\Delta_n = [n+1] - [n-1]$.

If $\ell = 2\ell'$ is even and $[\ell-1] = -1$, then $2[\ell'] = 0$ (as $[2\ell' - k] = -[k]$) and $[2][\ell'] =[\ell'+1] + [\ell'-1] = 0$.
By the minimality of $\ell$, both 2 and $[2]$ vanish (so $\ell = 2$).
In general, if we are suffering from 2-torsion, then $\Delta_n = \delta[n]$.
On the other hand, if $\delta = 0$, then
\begin{equation*}
  [n] = \begin{cases}
    {(-1)}^k & n = 2k-1\\
    0 & \text{else}
  \end{cases}\quad\quad\quad\quad\quad
  \Delta_n =  \begin{cases}
    2{(-1)}^k & n = 2k\\
    0 & \text{else}
  \end{cases}
\end{equation*}

The remaining cases to examine are $\ell$ even with $[\ell-1] = 1$ and $\ell$ odd.

\begin{lemma}\label{lem:quantum_far_diff}
  For all integers $n$ and $k$, $[n+k] - [n-k] = [k]\Delta_{n}$.
\end{lemma}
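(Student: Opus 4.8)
The plan is to prove the identity $[n+k]-[n-k]=[k]\Delta_n$ by reducing it to a computation in the fraction field of ${\Z[q^{\pm1}]}^S$, exactly in the spirit of the proof of \cref{lem:quantum_shatter}. Working with the $q$-expression \cref{eq:q_quant_def}, we have $[m]=(q^m-q^{-m})/(q-q^{-1})$ for all $m\in\Z$, and moreover $\Delta_n=[n+1]-[n-1]=(q^{n+1}-q^{-n-1}-q^{n-1}+q^{-n+1})/(q-q^{-1})=q^n+q^{-n}$ after cancelling the common factor $(q-q^{-1})$. So the claim becomes the assertion
\begin{equation*}
  \frac{q^{n+k}-q^{-n-k}}{q-q^{-1}}-\frac{q^{n-k}-q^{-n+k}}{q-q^{-1}}
  = \frac{q^k-q^{-k}}{q-q^{-1}}\,(q^n+q^{-n}),
\end{equation*}
which on multiplying through by $q-q^{-1}$ is the elementary identity $q^{n+k}-q^{-n-k}-q^{n-k}+q^{-n+k}=(q^k-q^{-k})(q^n+q^{-n})$; expanding the right-hand side gives $q^{n+k}+q^{-n+k}-q^{n-k}-q^{-n-k}$, which matches. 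As in \cref{lem:quantum_shatter}, this is an identity in the fraction field that in fact lies in ${\Z[q^{\pm1}]}^S$ (the $(q-q^{-1})$ in every denominator cancels), hence descends to an identity of polynomials in $\Z[\delta]$ and then, by specialisation, to any pointed ring.

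An alternative, purely $\delta$-based route avoids the fraction field entirely: fix $k$ and induct on $n$. For the base cases $n=0$ one checks $[k]-[-k]=2[k]=[k]\Delta_0$ since $\Delta_0=2$; for $n=1$ one checks $[k+1]-[1-k]=[k+1]+[k-1]=\delta[k]=[k]\Delta_1$ using the defining recurrence $[k+1]+[k-1]=\delta[k]$ and $[1-k]=-[k-1]$. For the inductive step, apply $\delta\cdot(\,\cdot\,)$ to the identity at $n$ and use $\delta[n+k]=[n+k+1]+[n+k-1]$, $\delta[n-k]=[n-k+1]+[n-k-1]$, and $\delta\Delta_n=\Delta_{n+1}+\Delta_{n-1}$; rearranging and invoking the identity at $n-1$ yields it at $n+1$. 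One should also run the induction downward in $n$ to cover negative indices, or simply note the identity is symmetric enough to extend by the recurrence $[m\pm1]=\delta[m]-[m\mp1]$ that makes all quantum numbers well defined for $m\in\Z$.

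I do not expect any genuine obstacle here — both arguments are short. The only point requiring a little care is the same subtlety flagged after \cref{lem:quantum_shatter}: the $q$-computation manipulates expressions with $q-q^{-1}$ in the denominator, which may vanish under specialisation (e.g. at $q=1$), so one must observe that after clearing this common factor the identity is genuinely polynomial in $\delta$ before specialising to $(R,\delta)$. The $\delta$-inductive proof sidesteps this entirely and is perhaps the cleaner thing to record, but the $q$-proof is the shortest to write; I would present the $q$-computation and append the one-line remark about descent, mirroring the style already established in this section.
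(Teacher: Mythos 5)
Your proposal is correct, but your primary route differs from the paper's. The paper fixes $n$, reduces to $k\ge 0$, and runs a short induction on $k$ using the recurrence $[n+k+1]=\delta[n+k]-[n+k-1]$ (together with the identity at $k$ and $k-1$ and the relation $\delta[k]-[k-1]=[k+1]$), staying entirely inside $\Z[\delta]$. Your main argument instead passes to ${\Z[q^{\pm1}]}^S$, observes that $\Delta_n=q^n+q^{-n}$, and verifies the identity by expanding $(q^k-q^{-k})(q^n+q^{-n})$ — exactly the strategy the paper uses for \cref{lem:quantum_shatter}. Both are valid: the $q$-computation is shorter and makes the identity transparent (it is the product-to-sum formula for Chebyshev polynomials), and your descent remark correctly handles the only subtlety, namely that both sides are genuinely symmetric Laurent polynomials so the fraction-field identity lands in ${\Z[q^{\pm1}]}^S\cong\Z[\delta]$ before any specialisation; the paper's recurrence induction buys the same conclusion without ever leaving $\Z[\delta]$. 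Your secondary, $\delta$-only argument (induction on $n$ with base cases $n=0,1$) is the transpose of the paper's induction on $k$ and is equally sound, provided, as you note, one either runs it downward as well or appeals to the two-sided recurrence to cover negative $n$; the paper avoids this by instead reducing to $k\ge0$, which is immediate since both sides are odd in $k$.
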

\begin{proof}
  Clearly it suffices to show the result for non-negative $k$.
  It is then a simple induction on $k$, with the case $k=0$ being trivial and the inductive step
  \begin{align*}
    [n+k+1] - [n-k-1] &= \delta[n+k] - [n+k-1] - \delta [n-k] + [n-k+1]\\
    &= \delta[k]\Delta_{n} - [k-1]\Delta_{n}\\
    &= [k+1]\Delta_{n}.
  \end{align*}
\end{proof}

\begin{lemma}
  If $\delta = \pm 2$ then $\Delta_n = 2{(\pm1)}^n$ for all $n$.
  Otherwise for $n \equiv_2 m$, we have that $\Delta_n = \Delta_m$, if and only if either $n \equiv_{2\ell} m$ or $n \equiv_{2\ell} -m$.
\end{lemma}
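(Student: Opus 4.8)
The plan is to reduce everything to the two-variable identity of \cref{lem:quantum_far_diff}, namely $[n+k]-[n-k]=[k]\Delta_n$, specialised cleverly, together with the $q$-formula \cref{eq:q_quant_def} when a square root of $\delta^2-4$ is available and a separate torsion argument when it is not. For the first sentence, observe that $\delta=\pm2$ corresponds precisely to $q=\pm1$ under $\delta=q+q^{-1}$ (this is the only case where $\delta^2-4=0$, so the quotient formula degenerates); here $[n]=(\pm1)^{n-1}n$ by the specialisation $q=1$ remark, and hence $\Delta_n=[n+1]-[n-1]=(\pm1)^n\big((n+1)+(n-1)\big)=2(\pm1)^n$, where the sign bookkeeping uses that $n+1$ and $n-1$ have the same parity, opposite to that of $n$. (One should note this computation is over $\Z[\delta]$ localised at $\delta\mp2$, or equivalently argue directly from the recurrence $\Delta_0=2$, $\Delta_1=\pm2$, $\delta\Delta_n=\Delta_{n+1}+\Delta_{n-1}$ by induction.)

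For the main clause, suppose $\delta\neq\pm2$, that $n\equiv_2 m$, and that $\Delta_n=\Delta_m$; without loss of generality $n\ge m\ge 0$. Write $n=m+2j$ with $j\ge0$. The key step is to apply \cref{lem:quantum_far_diff} with the roles chosen so that the hypothesis $\Delta_n=\Delta_m$ becomes a statement about a single quantum number vanishing. Concretely, set $a=(n+m)/2$ and $b=(n-m)/2=j$, both integers since $n\equiv_2 m$, so that $n=a+b$ and $m=a-b$. Then $\Delta_n-\Delta_m$ can be expressed via the difference-of-$\Delta$ analogue of \cref{lem:quantum_far_diff}: since $\Delta_k=[k+1]-[k-1]$, we get
\[
\Delta_{a+b}-\Delta_{a-b}=\big([a+b+1]-[a-b+1]\big)-\big([a+b-1]-[a-b-1]\big)=[b]\big(\Delta_{a+1}-\Delta_{a-1}\big),
\]
and applying \cref{lem:quantum_far_diff} once more (or directly) $\Delta_{a+1}-\Delta_{a-1}=[a+2]-[a-2]-([a]-[a])$ — one must chase the indices carefully, but the upshot is an identity of the form $\Delta_n-\Delta_m=[b]\cdot[\text{something}]\cdot\Delta_a$ or, more cleanly, $\Delta_n-\Delta_m=[\,(n-m)/2\,]\,[\,(n+m)/2\,]\,(\delta^2-4)/(\text{unit})$ type expression; the honest route is to note that in $\Z[q^{\pm1}]^S$ one has $\Delta_k=q^k+q^{-k}$, hence $\Delta_n-\Delta_m=(q^{(n-m)/2}-q^{-(n-m)/2})(q^{(n+m)/2}-q^{-(n+m)/2})\cdot(\text{sign})$, i.e.
\[
\Delta_n-\Delta_m=(q-q^{-1})^2\,[\,\tfrac{n-m}{2}\,]\,[\,\tfrac{n+m}{2}\,].
\]
This last identity holds symbolically in $\Z[\delta]$ after clearing the apparent denominator, exactly as in the proof of \cref{lem:quantum_shatter}, because $(q-q^{-1})^2=\delta^2-4$ lies in $\Z[\delta]$.

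Now specialise to $R$. Since $R$ is a domain and $\delta\neq\pm2$ gives $\delta^2-4\neq0$, the vanishing $\Delta_n=\Delta_m$ forces $[\,(n-m)/2\,]\,[\,(n+m)/2\,]=0$, hence one of the two factors vanishes. By the definition of $\ell$ as the least positive index with $[\ell]=0$ and the fact that all zeros of $[\,\cdot\,]$ occur exactly at multiples of $\ell$ (established in the Specialisation subsection), we conclude $\ell \mid (n-m)/2$ or $\ell \mid (n+m)/2$, i.e. $n\equiv_{2\ell} m$ or $n\equiv_{2\ell} -m$, which is the claim. The main obstacle is purely bookkeeping: getting the index shifts in the $\Delta$-difference identity exactly right and confirming that the factor $(q-q^{-1})^2$ really does clear to the honest polynomial $\delta^2-4\in\Z[\delta]$ so that the argument descends to $R$ without dividing by anything — once the identity $\Delta_n-\Delta_m=(\delta^2-4)[\tfrac{n-m}{2}][\tfrac{n+m}{2}]$ is in hand, the domain hypothesis does the rest.
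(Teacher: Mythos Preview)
Your argument is correct and arrives at exactly the factorization the paper uses, namely $\Delta_n - \Delta_m = (\delta^2-4)\,[\tfrac{n+m}{2}]\,[\tfrac{n-m}{2}]$, after which the domain hypothesis and $\delta^2-4\neq 0$ finish things immediately. The paper obtains this identity entirely inside $\Z[\delta]$ in two steps: first \cref{lem:quantum_far_diff} gives $\Delta_n-\Delta_m=[b](\Delta_{a+1}-\Delta_{a-1})$ with $a=\tfrac{n+m}{2}$ and $b=\tfrac{n-m}{2}$, and then a short recurrence computation yields $\Delta_{a+1}-\Delta_{a-1}=[a+2]+[a-2]-2[a]=(\delta^2-4)[a]$. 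Your route through the $q$-picture, using $\Delta_k=q^k+q^{-k}$ and factoring $q^n+q^{-n}-q^m-q^{-m}=(q^a-q^{-a})(q^b-q^{-b})$, reaches the same place in one stroke and is arguably slicker; the only extra cost is the sentence confirming $(q-q^{-1})^2=\delta^2-4$ so the identity genuinely lives in $\Z[\delta]$. Your write-up would be much tighter if you committed to that $q$-computation from the outset instead of sketching and then abandoning the $\delta$-based index chase mid-paragraph, and note the sign slip in the $\delta=\pm2$ case: it should be $(\pm1)^n\big((n+1)-(n-1)\big)$, not $+$.
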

\begin{proof}
  The first statement is clear.  Thus we may assume $\Delta \neq \pm 2$.

  Suppose $n>m$ and write $a = (n+m)/2$ and $b = (n-m)/2$.   Then
  \begin{align*}
    \Delta_n - \Delta_m &= [n + 1] - [m+1] - ([n-1] - [m-1])\\
    &= [a+b+1] - [a-b+1] - ([a+b-1] - [a-b-1])\\
    &= [b](\Delta_{a+1} - \Delta_{a-1})
  \end{align*}
  where the last equality follows from \cref{lem:quantum_far_diff}.

  Since $R$ is a domain, we have that if $\Delta_n = \Delta_m$, either $\ell | b$ or $2[a] = [a+2] + [a-2]$.
  However,
  \begin{align*}
    (\delta^2 - 2)[a] &= \delta([a+1] + [a-1]) - 2 [a]\\
    &= [a+2] + [a-2]
  \end{align*}
  and so in the second case, $(\delta^2 - 4)[a] = 0$.
  Since $\delta \neq \pm 2$, the quantum number $[a]$ must vanish so $\ell \mid a$.
  We have thus shown that $\ell \mid (n+m)/2$ or $\ell \mid (n-m)/2$ as desired.

  Recall $[2\ell -1]= -1$ so that $[2\ell + t] = [t]$.  That is to say that $[n]$ (and hence $\Delta_n$) is periodic with period $2\ell$.
  Further $\Delta_{-n} = [-n+1]-[-n-1] = [n+1]-[n-1] = \Delta_n$.
  This shows the converse.
\end{proof}

The principal application of this lemma is as follows.
Suppose $\delta \neq \pm 2$.
If we consider the integer number line and let the infinite dihedral group $D_\infty = \langle \rs, \rt : e = \rs^2 = \rt^2\rangle$ act by reflection about numbers one less than a multiple of $\ell$ (so that $\rs$ is reflection about $-1$ and $\rt$ is reflection about $\ell-1$), then the value of $\Delta_n$ describes exactly the orbits of this action.  This is shown in \cref{fig:orbits} for $\ell = 5$.

\begin{figure}[htpb]
  \centering
  \begin{tikzpicture}[scale=0.5]
  \draw (-1,0) -- (26, 0);
  \draw (4, -1) -- (4,1);
  \draw (9, -1) -- (9,1);
  \draw (14, -1) -- (14,1);
  \draw (19, -1) -- (19,1);
  \draw (24, -1) -- (24,1);
  \foreach \i in {0,...,25}{
    \draw[line=white,fill=white] (\i, -0.5) circle (0.2);
    \node at (\i, -0.5) {\small$\i$};
  }
  \draw[thick,<->,color=red] (4.65,0.5) arc (-20:200:0.7);
  \draw[thick,<->,color=blue] (9.65,0.5) arc (-20:200:0.7);
  \draw[thick,<->,color=red] (14.65,0.5) arc (-20:200:0.7);
  \draw[thick,<->,color=blue] (19.65,0.5) arc (-20:200:0.7);
  \draw[thick,<->,color=red] (24.65,0.5) arc (-20:200:0.7);

  \draw[fill, orange] (0, 0) circle (0.1);
  \draw[fill, orange] (8, 0) circle (0.1);
  \draw[fill, orange] (10, 0) circle (0.1);
  \draw[fill, orange] (18, 0) circle (0.1);
  \draw[fill, orange] (20, 0) circle (0.1);

  \draw[fill, green] (1, 0) circle (0.1);
  \draw[fill, green] (7, 0) circle (0.1);
  \draw[fill, green] (11, 0) circle (0.1);
  \draw[fill, green] (17, 0) circle (0.1);
  \draw[fill, green] (21, 0) circle (0.1);

  \draw[fill, purple] (2, 0) circle (0.1);
  \draw[fill, purple] (6, 0) circle (0.1);
  \draw[fill, purple] (12, 0) circle (0.1);
  \draw[fill, purple] (16, 0) circle (0.1);
  \draw[fill, purple] (22, 0) circle (0.1);

  \draw[fill, cyan] (3, 0) circle (0.1);
  \draw[fill, cyan] (5, 0) circle (0.1);
  \draw[fill, cyan] (13, 0) circle (0.1);
  \draw[fill, cyan] (15, 0) circle (0.1);
  \draw[fill, cyan] (23, 0) circle (0.1);
  \draw[fill, cyan] (25, 0) circle (0.1);

  \draw[fill, red] (4, 0) circle (0.1);
  \draw[fill, red] (14, 0) circle (0.1);
  \draw[fill, red] (24, 0) circle (0.1);
  \draw[fill, blue] (9, 0) circle (0.1);
  \draw[fill, blue] (19, 0) circle (0.1);
\end{tikzpicture}
  \caption{The orbits of $D_\infty$ on the natural number line for $\ell = 5$.
  Note that these are described exactly by equivalence classes given by the values of $\Delta_n$ where $[5] = 0$.}%
  \label{fig:orbits}
\end{figure}
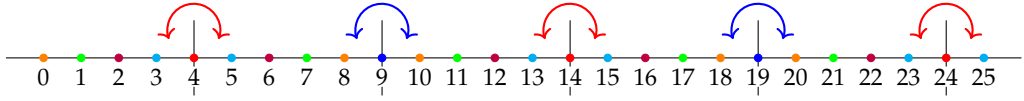

\subsection{Combinatorics}\label{subsec:comb}
A frequently used quantum analogue is the Gaussian binomial coefficient.
\begin{definition}
  The $(n,r)$-th Gaussian binomial coefficient is given by
  \begin{equation}
    \gaussianquant{n}{r} = \frac{[n]!}{[r]![n-r]!}.
  \end{equation}
\end{definition}
The moderately remarkable fact is that such coefficients, although expressed as a rational function are actually polynomials in $\delta$.
In the $\widetilde{[n]}_q$ formulation, they can be derived as the coefficient of $X^r Y^{n-r}$ in ${(X+Y)}^n$ in $\Z[q,X,Y] / (XY - qYX)$.

There is a form of Lucas' Theorem for specialised quantum binomials.  It is as follows
\begin{theorem}\label{thm:lucas}
  Let $n = [n_k, n_{k-1},\ldots, n_0]_{p,\ell}$ and $r = [r_k, r_{k-1},\ldots, r_0]_{p,\ell}$.
  Then
  \begin{equation}
    \gaussianquant{n}{r} =
    \binom{n_k}{r_k}\cdots \binom{n_1}{r_1}.
    \gaussianquant{n_0}{r_0}.
  \end{equation}
\end{theorem}
The pattern of vanishing quantum binomials is thus beautifully fractal in nature, foreshadowing the fractal-like answer to the principal questions in this paper.
This is illustrated by \cref{fig:sierpinski}.
Let us write $n \triangleright r$ if $n_i \ge r_i$ for all $i$.

\begin{figure}[htpb]
\begin{center}
  \includegraphics[width=0.5\textwidth]{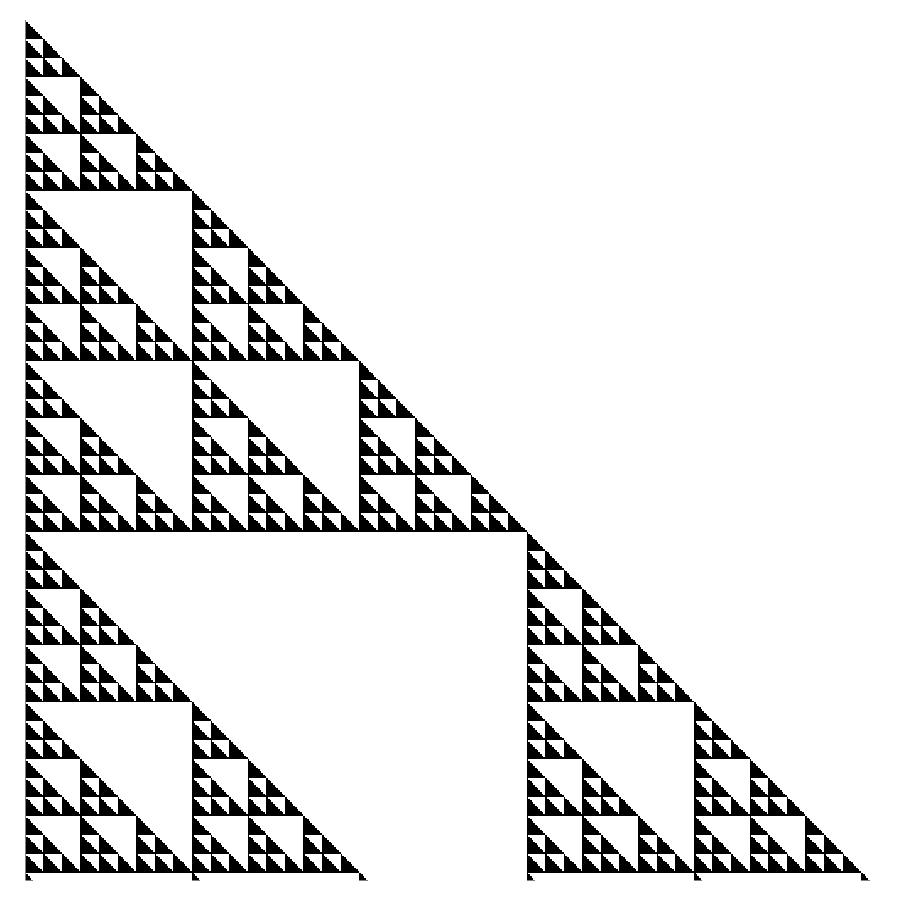}
\end{center}
\caption{A diagram showing the vanishing of quantum binomials for $p=3$ and $\ell = 11$.  Note the ``generalised Sierpinski'' form.}%
\label{fig:sierpinski}
\end{figure}

\begin{corollary}\label{cor:nonzero_binoms}
  The quantum binomial $\gaussianquant{n}{r}$ is non-zero iff $n \triangleright r$.
  Hence $\gaussianquant{n}{r}$ is non-zero for all $0 \le r \le n$ iff $n$ is less than $\ell$ or of the form $a p^{(k)}-1$ for some $0< k$ and $0< a < p$.
\end{corollary}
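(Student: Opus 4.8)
The plan is to deduce everything from Lucas' theorem (\cref{thm:lucas}) together with the observation $n \triangleright r$ that the corollary introduces. First I would prove the $\triangleright$ criterion: by \cref{thm:lucas} the quantum binomial $\gaussianquant{n}{r}$ factors as $\gaussianquant{n_0}{r_0}\binom{n_1}{r_1}\cdots\binom{n_k}{r_k}$, so in the domain $R$ it vanishes iff one of the factors does. For the ordinary binomial factors $\binom{n_i}{r_i}$ with $i>0$, these are evaluated modulo $p$ (since $R$ has characteristic $p$ as a domain), so $\binom{n_i}{r_i}\ne 0$ iff $n_i\ge r_i$ — but since $0\le n_i,r_i<p$, this is automatic unless $r_i>n_i$, i.e.\ the factor vanishes precisely when $n_i<r_i$. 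For the leading factor $\gaussianquant{n_0}{r_0}$ with $0\le n_0,r_0<\ell$, one needs that $\gaussianquant{a}{b}\ne 0$ for all $0\le b\le a<\ell$; this follows because $[1],[2],\dots,[\ell-1]$ are all nonzero by the minimality of $\ell$, so $[a]!/([b]![a-b]!)$ is a ratio of units (or at least nonzero elements) times the polynomial identity that makes it lie in $R$ — more carefully, $\gaussianquant{a}{b}$ is a polynomial in $\delta$ with integer coefficients whose value is forced by $[a]! = [b]![a-b]!\gaussianquant{a}{b}$, and since $[a]!\ne 0$ for $a<\ell$ the quotient is nonzero. Thus $\gaussianquant{n}{r}\ne 0$ iff $n_i\ge r_i$ for all $i$, i.e.\ $n\triangleright r$.

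Next I would establish the second sentence, which asks when $\gaussianquant{n}{r}\ne 0$ for \emph{all} $0\le r\le n$. By the first part this holds iff $n\triangleright r$ for every $r\le n$; equivalently, every choice of $(\ell,p)$-digit sequence $(r_i)$ satisfying $r_0<\ell$, $r_i<p$ for $i>0$, and $\sum r_i p^{(i)}\le n$ must satisfy $r_i\le n_i$ for all $i$. The key point is: the set of $r\le n$ need not have digit sequences dominated by those of $n$ unless the digits of $n$ are ``maximal'' in an appropriate sense. I would argue that $n\triangleright r$ for all $r\le n$ iff, writing $(n_i)_{i=0}^k$ for the digits of $n$ with $n_k\ne 0$, we have $n_0=\ell-1$ and $n_i=p-1$ for $1\le i<k$ (with $n_k$ arbitrary in $\{1,\dots,p-1\}$), \emph{or} $k=0$ (so $n<\ell$). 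Indeed, if $k=0$ then $n<\ell$ and any $r\le n$ has $r_0=r\le n=n_0$. If some lower digit $n_j$ ($j<k$) is not maximal, then I can exhibit an $r\le n$ with $r\not\triangleleft n$: take $r = n_j' p^{(j)}$ with $n_j<n_j'$ chosen so that $n_j' p^{(j)}\le n$ (possible because the higher digits $n_{j+1},\dots$ contribute at least $p^{(j+1)}=p\cdot p^{(j)}$ if $j<k$, leaving room); this $r$ has $j$-th digit exceeding $n_j$. Conversely, if all lower digits are maximal, then $n = (\text{lower digits all maxed}) + n_k p^{(k)}$; I claim $n = a p^{(k)} - 1$ where $a = n_k+1$, since $(\ell-1) + (p-1)\ell + (p-1)\ell p + \cdots + (p-1)\ell p^{k-2} = \ell p^{k-1} - 1 = p^{(k)}-1$ (a telescoping/geometric sum), and adding $n_k p^{(k)}$ gives $(n_k+1)p^{(k)} - 1$. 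Then any $r\le n = ap^{(k)}-1$ has all digits below position $k$ free to be anything (they are already maximal in $n$) and its $k$-th digit is $\le n_k$ since the total is $<ap^{(k)}$; hence $n\triangleright r$.

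I expect the main obstacle to be the bookkeeping in the second part: carefully showing that ``$n\triangleright r$ for all $r\le n$'' forces the digit pattern, in particular handling the interplay between the mixed-radix nature of the $(\ell,p)$-digits (first digit base $\ell$, rest base $p$) and the constraint $r\le n$ as integers rather than digitwise. The geometric-sum identity $(\ell-1)+\sum_{j=1}^{k-1}(p-1)\ell p^{j-1} = \ell p^{k-1}-1 = p^{(k)}-1$ is the clean algebraic fact that makes ``all lower digits maximal'' coincide with ``$n+1 = ap^{(k)}$'', and verifying it (and that the resulting $a$ satisfies $0<a<p$, forced by $n_k<p$) is the crux. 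The converse direction — that when $n = ap^{(k)}-1$ every $r\le n$ is digitwise dominated — is then essentially the statement that subtracting anything from a number all of whose relevant digits are maximal cannot increase a digit, which I would phrase as: if $n$ has maximal digits in all positions $<k$, the map $r\mapsto (r_i)$ restricted to $\{0,\dots,n\}$ lands in $\prod_{i<k}\{0,\dots,n_i\}\times\{0,\dots,n_k\}$ by a direct size comparison.
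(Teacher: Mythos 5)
Your proof is correct and follows the route the paper intends (the corollary is stated without proof as an immediate consequence of \cref{thm:lucas}): the product over digits vanishes in a domain iff some factor does, the factors for $i>0$ are ordinary binomials mod $p$, the $i=0$ factor is nonzero because $[j]\neq 0$ for $j<\ell$, and the second sentence is the digit-maximality computation $(\ell-1)+\sum_{j=1}^{k-1}(p-1)\ell p^{j-1}=p^{(k)}-1$. One trivial point: $n_k<p$ forces $a=n_k+1\le p$ rather than $a<p$, but the boundary case $a=p$ renormalises to $a=1$ at level $k+1$, so the characterisation you obtain agrees with the stated one.
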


We recount a proposition that is almost folklore.
Recall that a (rooted) forest is a partially ordered set such that if $a \le b$ and $a \le c$ then either $b \le c$ or $c \le b$.
\begin{proposition}\cite{stanley_1972}\label{prop:h_F}
  Let $(F, \le)$ be a forest and $c : F \to \N$ the function $c(x) = |\{y \in F \;:\; y\le x\}|$.
  Then the ratio
  \begin{equation}
    h_F = \frac{\big[|F|\big]!}{\Pi_{x\in F}[c(x)]}
  \end{equation}
  is a polynomial in $\delta$.
\end{proposition}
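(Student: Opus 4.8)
The plan is to prove this by induction on $|F|$, peeling off a maximal element and using the quantum Vandermonde/addition identity for Gaussian binomials. Observe first that the ratio $h_F$ is manifestly a rational function in $\delta$; the content of the proposition is that all denominators cancel. The key structural fact about a forest is that if $x$ is a maximal element lying in a tree (connected component) $T \subseteq F$, then the elements $\le x$ form a chain, so $c(x) = |\{y : y \le x\}|$ is the height of $x$ in its tree, and removing $x$ leaves a forest $F' = F \setminus \{x\}$ on which $c$ restricts to the same function (since no other element is $\ge x$).

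First I would set up the induction: the base case $F = \emptyset$ gives $h_F = [0]!/1 = 1$. For the inductive step, I would \emph{not} remove a single maximal element but rather remove an entire maximal tree $T$ (a connected component that is itself a rooted tree with root $\rho$, so $c(\rho) = 1$). Write $F = F' \sqcup T$ with $|F'| = a$ and $|T| = b$, so $|F| = a+b$. Then
\begin{equation*}
  h_F = \frac{[a+b]!}{\prod_{x \in F'}[c(x)] \cdot \prod_{x \in T}[c(x)]}
      = \gaussianquant{a+b}{b} \cdot \frac{[a]!}{\prod_{x\in F'}[c(x)]} \cdot \frac{[b]!}{\prod_{x\in T}[c(x)]}
      = \gaussianquant{a+b}{b}\, h_{F'}\, h_T,
\end{equation*}
since $c$ computed in $F'$ agrees with $c$ computed in $F$ on $F'$, and likewise for $T$ (the component structure means elements of $T$ have no predecessors outside $T$). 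The Gaussian binomial $\gaussianquant{a+b}{b}$ is a polynomial in $\delta$ by the remark following its definition, and $h_{F'}$ is a polynomial by the inductive hypothesis. So the problem reduces to the case where $F = T$ is a single tree with root $\rho$: here $h_T = \frac{[|T|]!}{[|T|]} \cdot \frac{1}{\prod_{x \ne \rho}[c(x)]}$, wait — more cleanly, $[c(\rho)] = [1] = 1$, and deleting $\rho$ leaves a forest $T \setminus \{\rho\}$ whose children-subtrees $T_1, \dots, T_m$ are the components; applying the displayed identity repeatedly (or the multinomial version) expresses $h_T = \gaussianquant{|T|-1}{|T_1|,\dots,|T_m|} \prod_j h_{T_j}$ — a product of a polynomial (the quantum multinomial, again polynomial in $\delta$) with strictly smaller instances, closing the induction.

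The one point requiring care — and the main obstacle — is verifying that the quantum multinomial coefficient $\frac{[n]!}{[a_1]!\cdots[a_m]!}$ (with $\sum a_j = n$) is a polynomial in $\delta$: this is what makes the "peel off a component" step legitimate. This follows by iterating the binomial case $\gaussianquant{a+b}{b} = \frac{[a+b]!}{[a]![b]!} \in \Z[\delta]$, which is itself the standard fact recalled in the excerpt (coefficient extraction in $\Z[q,X,Y]/(XY-qYX)$, or an induction via the quantum Pascal recurrence $\gaussianquant{n}{r} = \gaussianquant{n-1}{r-1} + \delta^{?}\gaussianquant{n-1}{r}$ in the $[\,\cdot\,]$-normalisation). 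A secondary bookkeeping subtlety: one must confirm the denominator $\prod_{x \in F}[c(x)]$ genuinely factors as claimed when splitting off a component, i.e.\ that the counting function $c$ is "local" to components and "local" to the subtree below any node — both immediate from the forest axiom, since $y \le x$ forces $y$ and $x$ to lie in the same component and in fact on a common chain. With these in hand the induction runs without further difficulty, and indeed gives the explicit hook-content style formula $h_F = \prod$ of quantum multinomials indexed by the internal nodes of $F$.
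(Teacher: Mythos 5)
Your overall architecture is the paper's: induct on $|F|$, split off a connected component via the identity $h_{F'\sqcup T}=\gaussianquant{|F'|+|T|}{|T|}h_{F'}h_T$ (which is correct, and is exactly the paper's ``true forest'' step), and reduce a single tree by deleting its root. But you have the orientation of the poset backwards, and taken literally this breaks the tree case. Under the stated forest axiom ($a\le b$ and $a\le c$ imply $b,c$ comparable) it is the \emph{up-sets} that are chains, so the root of a component is its \emph{maximum}, and $c(x)=|\{y:y\le x\}|$ is the size of the whole subtree hanging below $x$ --- the hook length --- not the height of $x$. In particular $c(\rho)=|T|$ for the root, not $1$. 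Your first instinct, $h_T=\frac{[|T|]!}{[|T|]}\cdot\frac{1}{\prod_{x\ne\rho}[c(x)]}$, was the right one; the ``cleaner'' version $[c(\rho)]=[1]=1$ is wrong. With $c(\rho)=1$ (root as minimum) the proposition itself would be false: a three-element tree with a minimum and two incomparable elements above it gives $[3]!/([1][2][2])=[3]/[2]$, not a polynomial. Moreover, under that convention deleting the root shifts $c$ on every surviving element, so your claimed identity $h_T=\gaussianquant{|T|-1}{|T_1|,\dots,|T_m|}\prod_j h_{T_j}$ would not follow from ``locality'' of $c$.

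The repair is exactly the paper's one-line observation: since the root $r$ is the maximum, $[c(r)]=[|T|]$ cancels against the top factor of $[|T|]!$ to give $h_T=h_{T\setminus\{r\}}$, and $c$ is unchanged on $T\setminus\{r\}$ precisely because no remaining element has $r$ in its down-set (your parenthetical ``no other element is $\ge x$'' is the right reason, applied to the maximum rather than to a minimal element). After that, $T\setminus\{r\}$ is a disjoint union of the subtrees $T_1,\dots,T_m$ and your multinomial formula is recovered by iterating the component-splitting step, whose polynomiality does indeed reduce to the binomial case as you say. So the final formula you state is correct and the method coincides with the paper's; the gap is the misidentification of the root and of $c$ on it, which must be fixed for the tree reduction to be valid.
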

We recount a proof for completeness.
\begin{proof}
  We induct on the cardinality of $F$.
  If $F$ is a singleton, the result clearly holds.

  If $F$ is a true forest (i.e.\ not a single tree) then suppose $F = F_1 \oplus F_2$ is a disjoint sum of forests.
  By induction $h_{F_1}$ and $h_{F_2}$ are polynomials in $\delta$.
  But 
  \begin{equation}
    h_F = \frac{\big[|F|\big]!}{\Pi_{x\in F}[c(x)]}
        = \frac{\big[|F|\big]!}{\big[|F_1|\big]!\big[|F_2|\big]!}\frac{\big[|F_1|\big]!}{\Pi_{x\in F_1}[c(x)]}\frac{\big[|F_2|\big]!}{\Pi_{x\in F_2}[c(x)]}
        = 
        \gaussianquant{|F|}{|F_1|}h_{F_1}h_{F_2}
  \end{equation}
  and so the result holds for $F$ by the fact that Gaussian binomial coefficients are polynomials.

  On the other hand, if $F$ is a tree and $r$ is it's root, with subforest $F \setminus \{r\} = F'$,
  \begin{equation}
    h_F = \frac{\big[|F|\big]!}{\Pi_{x\in F}[c(x)]}
        = \frac{\big[|F|\big]!}{\big[|F|\big] \Pi_{x\in F'}[c(x)]}
        = \frac{\big[|F|-1\big]!}{\Pi_{x\in F'}[c(x)]}
        = h_{F'}.
  \end{equation}
\end{proof}
The proof above makes it clear that the non-quantum version (setting $\delta = 2$ in $\Z$) reduces to $h_F$ counting the number of distinct order preserving maps $F \to \{1, \ldots, |F|\}$.
This is a form of ``hook length formula'' for forests.

\begin{definition}\label{def:h_F_x}
  For a monic $(n,m)$-diagram $x$, we can form an monic $(n+m,0)$-diagram by ``rotating'' the $m$ right hand points ``downwards'' to the left hand side.
  Formally, this is a diagram where source site $i$ is connected to source site $j$ if $1\le i,j \le n$ and source site $i$ is connected to source site $j$ in $x$ or if $1\le i\le n < j \le n+m$ and source site $i$ is connected to target site $m-j+n+1$ in $x$.
  We form the forest $F(x)$ of $x$ whose elements are edges in the $(n+m,0)$-diagram, and where $e_1 \le e_2$ iff $e_1$ is contained within the area described by $e_2$ and the left border of the diagram.
\end{definition}

The value $h_{F(x)}$ will be important in defining morphisms between modules of $\TL_n$ and also arrises in a form for Jones-Wenzl idempotents.

\section{Cellular Algebras}\label{sec:cellular}
We recount here the basic theory of cellular algebras as introduced by Graham and Lehrer.
The algebra $\TL^R_n(\delta)$ is cellular for all choices of $R$, $n$ and $\delta$, and may even be considered the canonical example.

\begin{definition}\label{def:cellular_algebra}\cite[1.1]{graham_lehrer_1996}
  An algebra $A$ over ring $R$ is termed cellular if there is a tuple of ``cell data'' $(\Lambda, M, C, \iota)$ where
  \begin{enumerate}
    \item the set $\Lambda$ of ``cell indices'' is partially ordered,
    \item for each $\lambda\in \Lambda$, $M(\lambda)$ is a finite set of ``$\lambda$-tableau'',
    \item the map $C : \amalg_{\lambda \in \Lambda} M(\lambda)\times M(\lambda) \to A$ sending $(m_1, m_2) \mapsto C^\lambda_{m_1,m_2}$ is injective and describes a $R$-basis for $A$, and
    \item the involution $\iota : A \to A$ sends $C^\lambda_{m_1, m_2}$ to $C^\lambda_{m_2, m_1}$.
  \end{enumerate}
  Finally, let $A^{<\lambda}$ denote the $R$-span of all $C^\mu_{m_1, m_2}$ for $\mu < \lambda$ and similarly for $A^{\le \lambda}$.
  Then we require that for all $m_1, m_2 \in M(\lambda)$ and $a\in A$, there is a form $\langle -, - \rangle_a$ on $M(\lambda)$ satisfying
  \begin{equation}\label{eq:cellular_def}
    a \cdot C^\lambda_{m_1, m_2}
    =
    \sum_{m_3\in M(\lambda)}\langle m_3, m_1\rangle_a C^\lambda_{m_3, m_2}
    \quad\quad\mod A^{< \lambda}.
  \end{equation}
\end{definition}
It is clear that \cref{eq:cellular_def} implies that $A^{\le \lambda}$ is an ideal for all $\lambda \in \Lambda$ and so $A^{< \lambda}$ is too.

Cellular algebras are endowed with two important cellular structures: cell ideal chains and cell modules.
Let $A$ be a cellular algebra with data $(\Lambda, M, C,\iota)$.
If $(\lambda_1, \ldots, \lambda_m)$ is an ordering of the cell indices such that $\lambda_i \le \lambda_j$ implies $i\le j$, then there is a chain of ideals
\begin{equation}
  0 = J_0 \subset J_1 \subset \cdots \subset J_m = A
\end{equation}
such that $J_i/J_{i-1}$ is spanned by ${\{C^{\lambda_i}_{m_1, m_2} + J_{i-1}\}}_{m_1,m_2\in M(\lambda_i)}$.
If $\Lambda$ is totally ordered, then the only cell ideal chain is
\begin{equation}
  0 = A^{<\lambda_1} \subset A^{<\lambda_2} \subset \cdots \subset A^{<\lambda_m} \subset A.
\end{equation}

The second important structure is the cell module indexed by $\lambda$. It is denoted $W(\lambda)$.
It is defined to be the $R$-span of the elements of $M(\lambda)$ with $A$-action given by
\begin{equation}
  a \cdot m = \sum_{m'\in M(\lambda)}\langle m', m \rangle_a m'.
\end{equation}

Substantial clarity can be obtained with the following observation.
By applying $\iota$ to \cref{eq:cellular_def}
\begin{equation}
  C^\lambda_{m_1, m_2}\cdot \iota(a)
  =
  \sum_{m_3\in M(\lambda)}\langle m_3, m_2\rangle_a C^\lambda_{m_1, m_3}
\quad\quad\mod A^{<\lambda}.
\end{equation}
A direct result is the following.
\begin{lemma}\cite[1.7]{graham_lehrer_1996}
  For any $a \in A$, $\lambda\in \Lambda$ and $m_1, m_2, m_3,m_4 \in M(\lambda)$,
  \begin{equation}
    C^\lambda_{m_1,m_2} a C^\lambda_{m_3,m_4}
    =
    \phi_a(m_2,m_3)
    C^\lambda_{m_1, m_4}
    \quad\quad \mod A^{<\lambda}
  \end{equation}
  for some map $\phi_a : M(\lambda)\times M(\lambda)\to R$.
\end{lemma}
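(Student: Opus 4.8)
The plan is to massage the product $C^\lambda_{m_1,m_2}\, a\, C^\lambda_{m_3,m_4}$ by applying the two displayed consequences of cellularity in succession, one from each side, and then observe that the resulting coefficient is independent of $m_1$ and $m_4$. Concretely, I would first treat $C^\lambda_{m_3,m_4}$ together with the preceding $a$: by \cref{eq:cellular_def} applied to the element $a' \eqdef C^\lambda_{m_1,m_2}\,a$ — or, more cleanly, by first rewriting $a\, C^\lambda_{m_3,m_4}$ using \cref{eq:cellular_def} — we get
\begin{equation*}
  a\, C^\lambda_{m_3,m_4} \equiv \sum_{m_5 \in M(\lambda)} \langle m_5, m_3\rangle_a\, C^\lambda_{m_5, m_4} \quad\mod A^{<\lambda}.
\end{equation*}
Left-multiplying by $C^\lambda_{m_1,m_2}$ and using that $A^{<\lambda}$ is a (two-sided) ideal — so $C^\lambda_{m_1,m_2}\cdot A^{<\lambda} \subseteq A^{<\lambda}$ — this congruence is preserved, giving
\begin{equation*}
  C^\lambda_{m_1,m_2}\, a\, C^\lambda_{m_3,m_4} \equiv \sum_{m_5 \in M(\lambda)} \langle m_5, m_3\rangle_a\; C^\lambda_{m_1,m_2}\, C^\lambda_{m_5, m_4} \quad\mod A^{<\lambda}.
\end{equation*}

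Next I would handle each term $C^\lambda_{m_1,m_2}\, C^\lambda_{m_5,m_4}$. Apply the $\iota$-transformed identity (the display just before the lemma) with $a$ replaced by $\iota(C^\lambda_{m_5,m_4}) = C^\lambda_{m_4,m_5}$: this yields
\begin{equation*}
  C^\lambda_{m_1,m_2}\, C^\lambda_{m_5,m_4} \equiv \sum_{m_6 \in M(\lambda)} \langle m_6, m_2\rangle_{C^\lambda_{m_4,m_5}}\; C^\lambda_{m_1, m_6} \quad\mod A^{<\lambda}.
\end{equation*}
The key structural point, which I would isolate as the heart of the argument, is that the right-hand side must in fact be a scalar multiple of $C^\lambda_{m_1,m_4}$ alone, with no other $C^\lambda_{m_1,m_6}$ appearing and with the scalar independent of $m_1$. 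The cleanest way to see this is to apply \cref{eq:cellular_def} directly to the product $C^\lambda_{m_5,m_4}\cdot\iota(b)$ type relation, or simply to note that $C^\lambda_{m_1,m_2}\,C^\lambda_{m_5,m_4}$, as an element lying in $A^{\le\lambda}$, reduces modulo $A^{<\lambda}$ to something in the span of $\{C^\lambda_{m_1',m_4}\}$ (second index fixed by the right factor) and simultaneously in the span of $\{C^\lambda_{m_1,m_4'}\}$ (first index fixed by the left factor, by the $\iota$-version); intersecting, only $C^\lambda_{m_1,m_4}$ survives. Define $\varphi(m_2,m_5)$ to be that scalar, so $C^\lambda_{m_1,m_2}\,C^\lambda_{m_5,m_4} \equiv \varphi(m_2,m_5)\,C^\lambda_{m_1,m_4} \mod A^{<\lambda}$, and check it is well-defined (independent of the auxiliary choices $m_1,m_4$) by comparing coefficients.

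Combining, set
\begin{equation*}
  \phi_a(m_2, m_3) \eqdef \sum_{m_5 \in M(\lambda)} \langle m_5, m_3\rangle_a\, \varphi(m_2, m_5),
\end{equation*}
which depends only on $a$, $m_2$ and $m_3$, and the two congruences give $C^\lambda_{m_1,m_2}\, a\, C^\lambda_{m_3,m_4} \equiv \phi_a(m_2,m_3)\,C^\lambda_{m_1,m_4} \mod A^{<\lambda}$, as required. The main obstacle, and the only place needing genuine care rather than bookkeeping, is the intersection argument establishing that the double reduction collapses onto a single basis element $C^\lambda_{m_1,m_4}$ with an $(m_1,m_4)$-independent coefficient; everything else is substitution and the ideal property of $A^{<\lambda}$. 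I would also note explicitly that injectivity of $C$ (part of the cell datum) is what licenses "comparing coefficients" throughout.
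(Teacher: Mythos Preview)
Your proof is correct and follows the route the paper indicates: it states the $\iota$-transformed version of \cref{eq:cellular_def} and then calls the lemma ``a direct result'', i.e.\ exactly your intersection argument that the triple product lies simultaneously in the span of $\{C^\lambda_{m_5,m_4}\}_{m_5}$ and of $\{C^\lambda_{m_1,m_6}\}_{m_6}$, forcing a scalar multiple of $C^\lambda_{m_1,m_4}$ with $(m_1,m_4)$-independent coefficient. The only streamlining available is that you need not first expand $a\,C^\lambda_{m_3,m_4}$ and then treat each $C^\lambda_{m_1,m_2}C^\lambda_{m_5,m_4}$ separately---you can apply \cref{eq:cellular_def} once with the acting element $C^\lambda_{m_1,m_2}a$ and its $\iota$-version once with the acting element $aC^\lambda_{m_3,m_4}$, directly on the full triple product, and intersect.
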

This defines a bilinear form on $W(\lambda)$ by extending $\phi_1$ from the basis $M(\lambda)$ to the entire space.
We denote this form $\langle - , - \rangle_\lambda$.
When clear, we will omit the subscript.

These forms dictate the structure of the algebra $A^{\le \lambda}/A^{<\lambda}$ in its entirety as
\begin{equation}
  C^\lambda _{m_1, m_2}
  C^\lambda _{m_3, m_4}
  = \langle m_2, m_3 \rangle C^\lambda_{m_1, m_4}
  \quad\quad\mod A^{<\lambda}.
\end{equation}

Further, these forms regulate the representation theory of $A$.
Henceforth assume $R$ is a field.
\begin{theorem}\label{thm:cell_reps}\cite[3.2,3.4,3.8]{graham_lehrer_1996}
  Let $R(\lambda)\subseteq W(\lambda)$ be the kernel of the bilinear form $\langle -,- \rangle_\lambda$.
  Further, let $\Lambda_0=\{\lambda\in\Lambda : \langle -,- \rangle \neq 0\}$.
  Then
  \begin{enumerate}
    \item $A$ is semisimple iff $R(\lambda) = 0$ for all $\lambda\in\Lambda$,
    \item if $\lambda\in\Lambda_0$, then the head of $W(\lambda)$ is $L(\lambda) = W(\lambda)/R(\lambda)$ and is absolutely irreducible, and
    \item ${\{W(\lambda)/R(\lambda)\}}_{\lambda\in\Lambda_0}$ is a complete set of non-isomorphic simple modules for $A$.
  \end{enumerate}
\end{theorem}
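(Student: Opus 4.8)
The plan is to prove the three statements in \cref{thm:cell_reps} by establishing, in order, a structural lemma about how the bilinear form governs submodules, then deriving irreducibility, then completeness. This is the classical Graham--Lehrer argument, which is purely formal once the cellular axioms are in hand, so I will lean on \cref{eq:cellular_def} and the multiplication rule $C^\lambda_{m_1,m_2}C^\lambda_{m_3,m_4} = \langle m_2,m_3\rangle C^\lambda_{m_1,m_4}$ modulo $A^{<\lambda}$.

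\medskip

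\textbf{Step 1: $R(\lambda)$ is a submodule and $W(\lambda)/R(\lambda)$ is simple when $\lambda\in\Lambda_0$.} First I would check that the radical $R(\lambda)$ of $\langle-,-\rangle_\lambda$ is an $A$-submodule: this follows because the form is ``contravariant'', $\langle a\cdot x, y\rangle_\lambda = \langle x, \iota(a)\cdot y\rangle_\lambda$, which is immediate from applying $\iota$ to \cref{eq:cellular_def} as the excerpt already notes. Then, given $\lambda\in\Lambda_0$, pick $m_1,m_2$ with $\langle m_1,m_2\rangle_\lambda\neq 0$. For any $x\in W(\lambda)\setminus R(\lambda)$, there is $y$ with $\langle x,y\rangle_\lambda\neq 0$; acting by a suitable $C^\lambda_{m,m'}$ (whose action on $W(\lambda)$ is $z\mapsto \langle m',z\rangle m$) one rescales $x$ to hit any chosen basis vector. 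Hence the submodule generated by any $x\notin R(\lambda)$ is all of $W(\lambda)$, so $W(\lambda)/R(\lambda)$ is simple — equivalently, $R(\lambda)$ is the unique maximal submodule, so it is the radical and the quotient is the cosocle. Absolute irreducibility follows because the same argument works after any field extension: the form and its radical are defined over $R$ and base change commutes with everything in sight, and one checks $\End_A(L(\lambda))=R$ by noting any endomorphism must preserve the line spanned by the image of $m_1$ under the rescaling trick.

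\medskip

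\textbf{Step 2: the $L(\lambda)$ are pairwise non-isomorphic and exhaust the simples.} For distinctness, observe that $A^{\le\lambda}$ acts nonzero on $L(\lambda)$ (by the choice of $m_1,m_2$ above) but $A^{\le\lambda}$ annihilates $W(\mu)$ for $\mu\not\ge\lambda$ and, more carefully, $L(\mu)$ for $\mu\neq\lambda$ in $\Lambda_0$ — one tracks which cell ideals act nonzero via the filtration $0=J_0\subset\cdots\subset J_m=A$. For exhaustion, let $S$ be any simple $A$-module. Using the cell ideal chain, there is a largest $i$ with $J_i S\neq 0$; then $J_i/J_{i-1}$ acts nonzero on $S$, and since $J_i/J_{i-1}$ is built from $W(\lambda_i)$ as a bimodule, there is a surjection from a direct sum of copies of $W(\lambda_i)$ onto $S$ (as $S$ is simple and is a quotient of $J_iS$). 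This forces $\lambda_i\in\Lambda_0$ and $S\cong L(\lambda_i)$: the surjection $W(\lambda_i)\onto S$ factors through the cosocle $L(\lambda_i)$ provided the form is nonzero, which it must be since otherwise $J_i/J_{i-1}$ would square to zero in its action and could not act nonzero on a simple module.

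\medskip

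\textbf{Step 3: semisimplicity criterion.} If $R(\lambda)=0$ for all $\lambda$, then each $W(\lambda)=L(\lambda)$ is simple, and the cell ideal filtration exhibits $A$ (as a bimodule, hence as a left module) as having all composition factors among the $L(\lambda)$ with each $J_i/J_{i-1}\cong W(\lambda_i)^{\oplus |M(\lambda_i)|}$ semisimple; a dimension/length count (each $L(\lambda)$ appears in the left regular representation with multiplicity $\dim L(\lambda)=|M(\lambda)|$, matching $\sum_\lambda |M(\lambda)|^2 = \dim A$) shows $A$ is semisimple. Conversely, if some $R(\lambda)\neq 0$ then $W(\lambda)$ is a non-semisimple module (it has $L(\lambda)$ as a proper quotient but $R(\lambda)$ sits inside the radical), or if $\lambda\notin\Lambda_0$ then $W(\lambda)$ has trivial-action issues forcing $A$ to have a non-split extension; either way $A$ is not semisimple. \textbf{The main obstacle} I anticipate is Step 2's exhaustion argument: making precise that every simple arises, which requires carefully using the bimodule structure of the subquotients $J_i/J_{i-1}$ and the fact that a simple module on which an ideal acts nonzero is a quotient of that ideal tensored appropriately — the bookkeeping with the two-sided structure and the ``mod $A^{<\lambda}$'' error terms is where the real care is needed, whereas Steps 1 and 3 are essentially formal consequences of the contravariant form.
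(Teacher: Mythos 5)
The paper does not prove this theorem at all: it is recounted verbatim from Graham and Lehrer's founding paper on cellular algebras, so there is no in-paper argument to compare against. Your reconstruction is the standard Graham--Lehrer proof and is correct in outline: contravariance of the form via $\iota$ makes $R(\lambda)$ a submodule; the multiplication rule $C^\lambda_{m_1,m_2}C^\lambda_{m_3,m_4}=\langle m_2,m_3\rangle C^\lambda_{m_1,m_4}$ modulo $A^{<\lambda}$ shows any $x\notin R(\lambda)$ generates $W(\lambda)$, giving the unique maximal submodule; distinctness and exhaustion come from the cell ideal chain; and semisimplicity follows from the dimension count $\sum_\lambda|M(\lambda)|^2=\dim A$ against the composition multiplicities.

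One step would fail as literally written. In the exhaustion argument you take ``the largest $i$ with $J_iS\neq 0$'': since the $J_i$ are increasing and $J_m=A$ acts nontrivially on any nonzero module, that largest $i$ is always $m$, and you then cannot conclude that $J_{i-1}S=0$, which is exactly what you need for the action to factor through $J_i/J_{i-1}\cong W(\lambda_i)^{\oplus|M(\lambda_i)|}$ and for the nilpotency argument ($J_i^2\subseteq J_{i-1}$ when the form vanishes, whence $S=J_i^2S\subseteq J_{i-1}S=0$) to produce a contradiction. You want the \emph{smallest} $i$ with $J_iS\neq 0$. With that correction the rest goes through. A second, milder point: in Step 3 the phrase ``a filtration with semisimple subquotients'' does not by itself imply semisimplicity (every finite-dimensional algebra has a composition series); it is only the multiplicity count $[A:L(\lambda)]=\dim L(\lambda)$, which you do state, that closes the argument, so that count should be presented as the essential step rather than a confirmation.
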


Note that $A^{\le \lambda}/A^{\lambda}$ is isomorphic to the direct sum of $|M(\lambda)|$ copies of $W(\lambda)$ as a left module over $A^{\le \lambda}$.
Indeed for each $m_2 \in M(\lambda)$, the $R$-span of $\{C^{\lambda}_{m_1,m_2} + A^{< \lambda}: m_1 \in M(\lambda)\}$ is a subspace of $A^{\le \lambda}/ A^{< \lambda}$ invariant under action of $A^{\le\lambda}$ and isomorphic as an $A$-module to $W(\lambda)$.
Further these modules all intersect trivially.

Let $d_{\lambda,\mu}$ be the composition multiplicity $[W(\lambda):L(\mu)]$.
The decomposition matrix $D = (d_{\mu,\lambda})$ is the primary object of study.
If the projective cover of $L(\lambda)$ is denoted $P(\lambda)$ so that $c_{\lambda,\mu} = [P(\lambda): L(\mu)]$ gives the Cartan matrix $C = (c_{\lambda,\mu})$, then we have the following crucial result.
\begin{theorem}\cite[3.7]{graham_lehrer_1996}\label{thm:gg}
  With respect to the ordering on $\Lambda$,
  the decomposition matrix is upper uni-triangular and $C = D^{t}D$.  That is,
  \begin{equation}
    [P(\lambda): L(\mu)] = \sum_{\nu} [W(\lambda) : L(\nu)] [W(\mu) : L(\nu)]
  \end{equation}
\end{theorem}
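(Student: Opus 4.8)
The plan is to prove the two assertions in \cref{thm:gg} essentially simultaneously, since the Cartan factorisation $C = D^t D$ will drop out of a projective resolution filtration once unitriangularity is in hand. The starting point is the cell ideal chain for $A$. Fix a linear refinement $\lambda_1, \ldots, \lambda_m$ of the partial order with $\lambda_i \le \lambda_j \Rightarrow i \le j$, giving $0 = J_0 \subset J_1 \subset \cdots \subset J_m = A$ with $J_i/J_{i-1}$ a direct sum of $|M(\lambda_i)|$ copies of $W(\lambda_i)$ as a left $A$-module, as recorded just before \cref{thm:cell_reps}.

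First I would establish unitriangularity of $D$. By \cref{thm:cell_reps}(2), for $\lambda \in \Lambda_0$ the module $W(\lambda)$ has simple head $L(\lambda)$, so $d_{\lambda,\lambda} = [W(\lambda):L(\lambda)] = 1$. For the vanishing above the diagonal, I would argue that every composition factor $L(\mu)$ of $W(\lambda)$ satisfies $\mu \le \lambda$: the module $W(\lambda)$ is a quotient of $A^{\le\lambda}/A^{<\lambda}$ (it sits inside it as a summand, but more usefully it is annihilated by $A^{<\lambda}$ in the sense that the action factors through $A/A^{<\lambda}$ acting on the $\lambda$-cell layer), and a simple module $L(\mu)$ arising as $W(\mu)/R(\mu)$ has the property that $J_{<\mu}$ — the sum of cell layers strictly below $\mu$ — acts as zero while $A^{\le\mu}$ acts nontrivially; comparing with the filtration by ideals forces $\mu \le \lambda$ whenever $L(\mu)$ appears in $W(\lambda)$. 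Hence $d_{\lambda,\mu} = 0$ unless $\mu \le \lambda$, i.e. $D$ is upper unitriangular with respect to the chosen order. (The only subtlety is that the order on $\Lambda$ may be partial; one passes to the chosen linear refinement, where the statement is literally triangular, and then notes the off-diagonal zeros respect the original partial order.)

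Next I would compute $[P(\lambda):L(\mu)]$. The standard device is to filter a projective $A$-module by the cell ideal chain: for any left $A$-module $N$, tensoring the chain $J_\bullet$ against — or rather, intersecting $N$ with the image of — the idempotents picking out $P(\lambda)$ gives a filtration of $P(\lambda)$ whose subquotients are direct sums of cell modules $W(\nu)$. The multiplicity of $W(\nu)$ as a subquotient of $P(\lambda)$ equals $\dim e_\lambda (J_\nu/J_{\nu-1}) / (\text{one copy})$, and by the self-duality of cellular algebras under $\iota$ together with $P(\lambda)$ being the projective cover of $L(\lambda)$, this multiplicity is exactly $[W(\nu):L(\lambda)] = d_{\nu,\lambda}$. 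Therefore
\begin{equation}
  [P(\lambda):L(\mu)] = \sum_{\nu}\, (\text{mult. of }W(\nu)\text{ in }P(\lambda))\cdot[W(\nu):L(\mu)] = \sum_{\nu} d_{\nu,\lambda}\, d_{\nu,\mu},
\end{equation}
which is precisely the $(\lambda,\mu)$ entry of $D^t D$, and the displayed formula in the theorem. The key input is that $P(\lambda)$ admits a cell (``standard'') filtration and that the filtration multiplicities are given by the ``reciprocity'' $d_{\nu,\lambda}$; this is where the anti-involution $\iota$ is used, via the observation (made in the excerpt) that $A^{\le\lambda}/A^{<\lambda}$ is a sum of $|M(\lambda)|$ copies of $W(\lambda)$ and carries the symmetric form $\langle-,-\rangle_\lambda$.

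The main obstacle is the cell-filtration-of-projectives step and the identification of its multiplicities: one must show that the filtration of $A$ by the $J_i$ induces, after multiplying by a primitive idempotent $e$ with $Ae = P(\lambda)$, a filtration of $P(\lambda)$ with cell-module subquotients, and then pin down how many times $W(\nu)$ occurs. The clean way is to observe that $\dim \Hom_A(P(\lambda), W(\nu)) = [W(\nu):L(\lambda)] = d_{\nu,\lambda}$ (projectivity plus $W(\nu)$ having simple head only in the $\Lambda_0$ case — in general one uses that $\dim\Hom_A(P(\lambda),N) = [N:L(\lambda)]$ for any finite-dimensional $N$), and that a standard filtration is multiplicity-rigid so that the number of $W(\nu)$-subquotients equals this Hom-dimension. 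Everything else — the diagonal entries, the vanishing of $D$ off the partial order — is comparatively routine given \cref{thm:cell_reps}.
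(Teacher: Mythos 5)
The paper offers no proof of \cref{thm:gg} --- it is quoted from Graham--Lehrer --- so I am comparing your outline with their standard argument, which it essentially reproduces: unitriangularity from \cref{thm:cell_reps}(2) plus an ideal-theoretic comparison, and $C=D^tD$ from the filtration of $P(\lambda)=Ae$ by the $J_ie$. Two repairs are needed. First, your triangularity claim points the wrong way for this paper's conventions: because the defining congruence \cref{eq:cellular_def} is taken modulo $A^{<\lambda}$, a composition factor $L(\mu)$ of $W(\lambda)$ forces $\mu\ge\lambda$, not $\mu\le\lambda$ (compare \cref{thm:linkage}, where the factors of $S(n,m')$ are $D(n,m)$ with $m> m'$). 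More seriously, the mechanism you sketch --- ``$A^{\le\mu}$ acts nontrivially on $L(\mu)$ while the lower cell layers act as zero, and comparison with the ideal filtration forces comparability'' --- only rules out $\mu<\lambda$; for a genuinely partial order it does not produce comparability at all. The argument that works is: choose $S,T$ so that $C^\mu_{S,T}$ acts nontrivially on $L(\mu)$, hence on $W(\lambda)$; but $C^\mu_{S,T}C^\lambda_{U,V}$ lies in the ideal $A^{\le\mu}$, so if $\lambda\not\le\mu$ it lies in $A^{<\lambda}$ and \cref{eq:cellular_def} forces every coefficient $\langle m_3,U\rangle_{C^\mu_{S,T}}$ to vanish --- a contradiction. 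Hence $\lambda\le\mu$.

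Second, of the two justifications you offer for the filtration multiplicities, keep the first and discard the second. The clean, non-circular route is the bimodule decomposition $J_i/J_{i-1}\cong W(\lambda_i)\otimes_R\iota(W(\lambda_i))$, which gives $(J_i/J_{i-1})e\cong W(\lambda_i)\otimes_R\iota(W(\lambda_i))e$, a direct sum of $\dim \iota(W(\lambda_i))e=\dim\iota(e)W(\lambda_i)=[W(\lambda_i):L(\lambda)]$ copies of $W(\lambda_i)$; here one uses that $\iota$ fixes the isomorphism class of each simple (so $\iota(e)$ is conjugate to $e$) and that $L(\lambda)$ is absolutely irreducible. Your alternative via ``$\dim\Hom_A(P(\lambda),W(\nu))$ equals the number of $W(\nu)$-layers by multiplicity-rigidity'' is circular in this context: that equality is exactly the reciprocity $[P(\lambda):W(\nu)]=[W(\nu):L(\lambda)]$ being established. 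Finally, a point in your favour: your concluding formula $[P(\lambda):L(\mu)]=\sum_\nu[W(\nu):L(\lambda)]\,[W(\nu):L(\mu)]$ is the correct reading of $C=D^tD$; the summand indices in the equation displayed in \cref{thm:gg} are transposed relative to this, and since $D$ is only unitriangular the two expressions genuinely differ.
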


As mentioned above, the Temperley--Lieb algebra is in many ways the quintessential cellular algebra --- in fact it is the third example in the paper defining cellular algebras.
\begin{proposition}\cite[6.7]{graham_lehrer_1996}
  For arbitrary commutative pointed ring $(R,\delta)$, the Temperley--Lieb algebra $\TL^R_n(\delta)$ is cellular with cell data:
  \begin{enumerate}
    \item cell indices $\Lambda = \{ m : 0\le m \le n \mbox{ and } m\equiv_2 n\}$ inheriting the usual order,
    \item $m$-tableaux monic diagrams morphisms $\underline{n} \to \underline{m}$,
    \item basis $C^m_{x,y}$ given by $|x\rangle\langle y|$ for monic diagrams $x,y :\underline{n} \to \underline{m}$, and
    \item the anti-automorphism $\iota$.
  \end{enumerate}
  The standard form is defined on the basis so that
  \begin{equation}
    \langle x | y \rangle  = \langle x, y\rangle_{m}\id_{m} + A^{<m}.
  \end{equation}
\end{proposition}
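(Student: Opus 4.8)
The plan is to verify the four axioms of Definition~\ref{def:cellular_algebra} directly, using the diagrammatic description of $TL^R_n(\delta)$ and the bra-ket formalism. The partial order on $\Lambda$ is clear, and finiteness of each $M(i)$ follows from \cref{lem:diagram-tableaux}. The substantive points are: (a) that the $C^i_{x,y} = |x\rangle\langle y|$ form an $R$-basis of $TL_n$; (b) that $\iota$ acts as claimed; and (c) the cellular multiplication formula \eqref{eq:cellular_def}, which amounts to understanding how a diagram acts on a ket modulo lower propagation number.

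First I would establish (a). Every $(n,n)$-diagram $d$ has a propagation number $i \equiv_2 n$; factoring $d$ through $\underline{i}$ at a point where the propagation lines are ``straightened'' writes $d = |x\rangle\langle y|$ for unique monic diagrams $x, y : \underline{n} \to \underline{i}$ (uniqueness because the propagating strands of $d$ determine both how the top $n$ points are capped off and how the bottom $n$ points are capped off). Conversely each such $|x\rangle\langle y|$ is a single diagram with propagation number exactly $i$ — no closed loops arise in this composition since $x$ is monic. So the $C^i_{x,y}$ biject with $(n,n)$-diagrams, which are a basis of $TL_n$. This simultaneously shows that $TL_n^{<i} = \mathcal{F}^{i-2}(TL_n)$ in the filtration~\eqref{eq:tln_filt}. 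Axiom (b) is immediate: $\iota$ is reflection in the vertical axis, so $\iota(|x\rangle\langle y|) = |y\rangle\langle x|$, i.e. $\iota(C^i_{x,y}) = C^i_{y,x}$.

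Next, the multiplication formula. For $a \in TL_n$ and $C^i_{x,y} = |x\rangle\langle y|$, I would compute $a \cdot |x\rangle\langle y|$. Composing $a$ (a sum of $(n,n)$-diagrams) with the monic diagram $|x\rangle$ gives a morphism $\underline{n}\to\underline{i}$; any diagram in its support with propagation number $< i$ contributes, after post-composing with $\langle y|$, a term lying in $\mathcal{F}^{i-2}(TL_n) = TL_n^{<i}$. The remaining terms have propagation number exactly $i$, hence are monic diagrams $\underline n\to\underline i$ times possibly a power of $\delta$ from closed loops; collecting them writes $a|x\rangle = \sum_{m_3} \langle m_3, x\rangle_a\, |m_3\rangle \bmod (\text{lower})$ for scalars $\langle m_3, x\rangle_a \in R$ independent of $y$ — this independence is the key structural point, and it holds because the right half $\langle y|$ plays no role in the composition $a\circ|x\rangle$. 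Post-composing with $\langle y|$ then yields \eqref{eq:cellular_def}. Finally, to match the stated formula for $\langle x,y\rangle_{C^i_{w,z}}$, I take $a = C^i_{w,z} = |w\rangle\langle z|$ and compute $|w\rangle\langle z|\cdot|x\rangle\langle y| = |w\rangle\,(\langle z|x\rangle)\,\langle y|$; the scalar $\langle z|x\rangle \in \End_{\TLcat}(\underline i)$ is $\lambda_{z,x}\,\id_i$ modulo $TL_n^{<i}$ if $\langle z|x\rangle$ has full propagation, giving $\langle x, z\rangle_{C^i_{w,z}}$-type contributions, while the ``outer'' $|w\rangle$ forces the Kronecker $\delta_{x,w}$ on the first slot — wait, more carefully: acting on the basis element $C^i_{x,y}$ one reads off coefficient $\delta_{x,w}\lambda_{z,y}$ as written, where $\langle z|y\rangle \equiv \lambda_{z,y}\id_i$. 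I would double-check the index bookkeeping here against \cref{eq:cellular_def}.

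The main obstacle I anticipate is the claim that $a\,|x\rangle$, reduced modulo lower propagation, is a well-defined $R$-linear combination of kets $|m_3\rangle$ with coefficients $\langle m_3, x\rangle_a$ \emph{not depending on $y$} — equivalently, that composition with $\langle y|$ commutes with the reduction modulo $TL_n^{<i}$ in the required way. This is really the statement that left multiplication on $TL_n^{\le i}/TL_n^{<i}$ descends to the cell module $W(i)$, and it needs the observation (made in the excerpt) that $TL_n^{\le i}/TL_n^{<i} \cong W(i)^{\oplus |M(i)|}$ via the second index; I would prove exactly that decomposition as the crux, and everything else follows. A secondary routine check is that no spurious $\delta$-factors appear: since $x$ and $w$ (etc.) are monic, the only loops in any of these compositions come from the inner pairing $\langle z|x\rangle$ or $\langle z|y\rangle$, which is precisely where the scalars $\lambda_{\bullet,\bullet}$ (polynomials in $\delta$) are born.
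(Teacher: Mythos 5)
Your argument is correct. Note that the paper itself offers no proof of this proposition at all --- it treats the cellularity of $TL_n$ as known, deferring to Graham and Lehrer (where it appears as one of the founding examples) --- so there is nothing to compare against; your write-up simply supplies the omitted details. You correctly identify the one genuinely substantive point, namely that the structure constants of $a\cdot|x\rangle\langle y|$ modulo $\mathcal{F}^{i-2}(TL_n) = A^{<i}$ depend only on $a$ and $x$ and not on $y$, which holds because they are read off from the composite $a\circ|x\rangle$ before the bra is attached, and because post-composition with $\langle y|$ cannot raise propagation number. The remaining checks (the bijection between $(n,n)$-diagrams and pairs of standard $(n,i)$-diagrams via the through-strands, the absence of closed loops in $|x\rangle\langle y|$ for standard $x,y$, and $\iota(|x\rangle\langle y|)=|y\rangle\langle x|$) are exactly as you describe, and your final index bookkeeping $|w\rangle\langle z|\cdot|x\rangle\langle y|=\lambda_{z,x}\,C^i_{w,y} \bmod A^{<i}$ does agree with the stated formula once $(x,y)$ in the proposition is read as the pair $(m_3,m_1)$ of \cref{eq:cellular_def}.
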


It will be shown in \cref{lem:form_nondegenerate} that $\Lambda_0 = \Lambda$ unless $n$ is even and $\delta = 0$ in which case $\Lambda_0 = \Lambda \setminus \{0\}$.

\section{Cell Modules}\label{sec:cell-modules}
We introduce the cell modules as defined by the cellular structure of $\TL_n$ in a diagrammatic manner.
To keep parity with the representation of other finite-dimensional algebras, we will label the module $W(m)$ for $\TL_n$ as $S(n,m)$ and may use the term ``standard module'' interchangeably.

\begin{definition}
  For arbitrary $R$ and $\delta \in R$, let $M(n,m)$ be the homomorphism space $\Hom_{\TLcat}(\underline{n}, \underline{m})$ with natural (left) $\TL_n$ action.
  Then $S(n,m)$ is defined to be the quotient of $M(n,m)$ by the submodule of all morphisms factoring through $\underline{i}$ for any $i<m$.
\end{definition}
This slightly opaque definition has a clear diagrammatic presentation.
Firstly, note that $S(n,m) = 0$ unless $m\le n$ and is of the same parity.
The module $S(n,m)$ has basis given by monic $(n,m)$-diagrams.
The action of $\TL_n$ is given by standard diagrammatic composition, but any resulting diagram that is not monic is killed.
An example of $u_4$ acting on an element of $S(5,3)$ is given below:

\begin{center}
    $
  \vcenter{\hbox{
  \begin{tikzpicture}[scale=0.5]
    \draw (2.0, -.5) -- (2.0,4.5);
    \draw (0.0, -.5) -- (0.0,4.5);
    \foreach \i in {0,...,4} {
      \draw[fill] (2, \i) circle (0.1);
      \draw[fill] (0, \i) circle (0.1);
    }
    \draw[very thick] (2, 4) to[out=180, in=0] (0,4);
    \draw[very thick] (2, 2) to[out=180, in=0] (0,2);
    \draw[very thick] (2, 3) to[out=180, in=0] (0,3);

    \draw[very thick] (0, 0) to[out=0, in=0] (0,1);
    \draw[very thick] (2, 0) to[out=180, in=180] (2,1);
  \end{tikzpicture}
}} \quad\cdot\quad
  \left(
  \vcenter{\hbox{
  \begin{tikzpicture}[scale=0.5]
    \draw (2.0, 0.5) -- (2.0,3.5);
    \draw (0.0, -.5) -- (0.0,4.5);
    \foreach \i in {0,...,2} {
      \draw[fill] (2, \i+1) circle (0.1);
    }
    \foreach \i in {0,...,4} {
      \draw[fill] (0, \i) circle (0.1);
    }
    \draw[very thick] (2, 1) to[out=180, in=0] (0,0);
    \draw[very thick] (2, 2) to[out=180, in=0] (0,3);
    \draw[very thick] (2, 3) to[out=180, in=0] (0,4);

    \draw[very thick] (0, 1) to[out=0, in=0] (0,2);
  \end{tikzpicture}
}} \;+ 
  \vcenter{\hbox{
  \begin{tikzpicture}[scale=0.5]
    \draw (2.0, 0.5) -- (2.0,3.5);
    \draw (0.0, -.5) -- (0.0,4.5);
    \foreach \i in {0,...,2} {
      \draw[fill] (2, \i+1) circle (0.1);
    }
    \foreach \i in {0,...,4} {
      \draw[fill] (0, \i) circle (0.1);
    }
    \draw[very thick] (2, 1) to[out=180, in=0] (0,0);
    \draw[very thick] (2, 2) to[out=180, in=0] (0,1);
    \draw[very thick] (2, 3) to[out=180, in=0] (0,4);

    \draw[very thick] (0, 2) to[out=0, in=0] (0,3);
  \end{tikzpicture}
}}
  \right)
   \quad= \quad
  \vcenter{\hbox{
  \begin{tikzpicture}[scale=0.5]
    \draw (2.0, 0.5) -- (2.0,3.5);
    \draw (0.0, -.5) -- (0.0,4.5);
    \foreach \i in {0,...,2} {
      \draw[fill] (2, \i+1) circle (0.1);
    }
    \foreach \i in {0,...,4} {
      \draw[fill] (0, \i) circle (0.1);
    }
    \draw[very thick] (2, 1) to[out=180, in=0] (0,2);
    \draw[very thick] (2, 2) to[out=180, in=0] (0,3);
    \draw[very thick] (2, 3) to[out=180, in=0] (0,4);

    \draw[very thick] (0, 0) to[out=0, in=0] (0,1);
  \end{tikzpicture}
}}
  $
\end{center}
\vspace{1em}

\begin{remark}
  In~\cite{graham_lehrer_1998}, Graham and Lehrer embrace the category theory and construct the cell modules as follows.
  Any functor $F : \TLcat^R \to \cmod{R}$ describes a family of modules for the Temperley--Lieb algebras naturally.
  Indeed, the module ``at'' $\TL_n^R$ is given by $F(\underline{n})$ as an $R$-space with action given by $a \cdot v = F(a)v$ for any $v\in F(\underline{n})$ and $a \in \TL_n^R = \End_{\TLcat}(\underline{n})$.
  This framework carries an advantage in that it recognises the place of $\TL_n$ within a broader structure.
  More specifically, morphisms in $\Hom_{\mathcal{TL}}(\underline{n}, \underline{m})$ can act on these modules.

  In this framework, morphisms between functors are restricted to such actions.
  To broaden into the general case where morphisms may be more arbitrary brings us to the representations of 2-categories which is outside the scope of this paper.

  However, the fact is that the only morphisms between cell modules do arise from elements of $\Hom_{\TLcat}(\underline{n},\underline{m})$ and so not much is lost in this methodology.
  This (along with the results in \cref{sec:truncation}) elucidate the ambivalence of the theory towards the parameter $n$: a result that is not obvious from the initial statement.
\end{remark}

As evidenced in \cref{sec:cellular}, a crucial role is played by the standard form on $S(n,m)$.
This is defined on diagrams $x$ and $y$ by the coefficient of $id_m$ in $\langle x | y \rangle$.
That is to say,
\begin{equation}\label{eq:form}
  \langle x | y \rangle = \langle x, y\rangle \id_m \quad \quad \mod \End_{\TLcat}^{<m}(\underline{m})
\end{equation}
\begin{lemma}\label{lem:form_nondegenerate}
  Over a pointed field, the standard form is non-degenerate iff $\delta$ is invertible or $m > 0$.
\end{lemma}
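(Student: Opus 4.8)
The plan is to analyse the Gram matrix of the standard form on $S(n,m)$ in the diagram basis and show it has full rank precisely when $\delta$ is invertible or $m>0$. First I would handle the degenerate extremes. If $m=n$ there is a unique monic diagram (the identity), $\langle x|y\rangle = \langle \id_n | \id_n \rangle = \id_n$, and the form is the identity on a one-dimensional space, hence nondegenerate for any $\delta$; this already shows $m>0$ suffices in the boundary case, and more generally we should expect the ``top'' of the module to behave well. At the other extreme, if $m=0$ and $n>0$, then $S(n,0)$ is one-dimensional spanned by a single $(n,0)$-diagram $x$ (a nested system of cups), and $\langle x | x \rangle$ is the closed diagram obtained by capping $x$ against itself; each nested cup-cap pair resolves to a factor of $\delta$, so $\langle x, x\rangle = \delta^{n/2}$. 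This is nonzero in the field iff $\delta$ is invertible, which is exactly the claimed dichotomy in the $m=0$ case. So the substance of the lemma is: for $0 < m \le n$ the form is \emph{always} nondegenerate, and for $m=0$ it is nondegenerate iff $\delta \neq 0$.

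For the main case $0<m\le n$ I would argue that the form cannot be zero and then bootstrap to nondegeneracy using the cellular machinery. To see the form is nonzero, pick any monic diagram $x:\underline n\to\underline m$ with the property that $\langle x|x\rangle$ has $\id_m$ in its support with a nonzero coefficient — for instance the diagram whose only cups are among the bottom $n-m$ source sites, nested, connecting site $m+2i-1$ to $m+2i$, and whose top $m$ sites go straight through; then $\langle x | x\rangle = \delta^{(n-m)/2}\,\id_m$ in $\TLcat$ before any truncation, but that still has a $\delta$-power coefficient which could vanish. To get a coefficient that is genuinely $1$ regardless of $\delta$, instead take $x$ and $y$ to be two \emph{distinct} monic diagrams chosen so that $\langle x|y\rangle = \id_m$ on the nose: e.g. $x$ closes sites $\{m+1,m+2\},\{m+3,m+4\},\dots$ and $y$ closes the ``shifted'' pairs so that capping $x$ against $y$ traces out exactly the through-strands with no closed loops. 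Since $m>0$ there is room for such a pair (when $m=n$ the only choice is $x=y=\id$, still giving coefficient $1$). Hence the $(x,y)$ entry of the Gram matrix is $1$, so the form is nonzero, i.e.\ $m\in\Lambda_0$.

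It remains to upgrade ``nonzero'' to ``nondegenerate''. Here I would use the recursive/branching structure of $S(n,m)$: restricting along the inclusion $TL_{n-1}\hookrightarrow TL_n$ (or using the cup/cap maps $S(n,m)\to S(n-2,m)$ and $S(n-1,m-1)\to S(n-1,m+1)$ etc.) gives a filtration of $S(n,m)$ whose sections are smaller cell modules, and the standard form is compatible with these maps up to a scalar that is a power of $\delta$ or a quantum number. The cleanest route is probably an explicit computation or a determinant/Gram-matrix recursion: order the monic diagrams suitably and show the Gram matrix is, after a change of basis, block upper-triangular with diagonal blocks the Gram matrices of $S(n-2,m)$ together with an invertible ``new'' block (coming from the strand attached to the last site), reducing to the case $m>0$ smaller $n$, whose base case $m=n$ was handled above. \textbf{The main obstacle} I anticipate is precisely this last step: controlling how the truncation ``$\!\!\mod \End^{<m}$'' interacts with a diagram-basis recursion, since naive composition of monic diagrams produces non-monic diagrams that must be discarded, and one must check that the discarded terms never conspire to lower the rank. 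I would expect to resolve it by choosing a basis adapted to the filtration by ``position of the cup incident to site $n$'' and verifying the triangularity claim diagram by diagram, invoking \cref{lem:diagram-tableaux} to keep track of dimensions.
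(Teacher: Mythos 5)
There is a genuine problem, but it lies in how you have read the statement rather than in your constructions. You interpret ``nondegenerate'' as ``the Gram matrix has full rank,'' and accordingly your third paragraph sets out to prove that for all $0<m\le n$ the form on $S(n,m)$ has trivial radical. That statement is false, and no Gram-matrix recursion will rescue it: the example immediately following the lemma in the paper exhibits $S(n,n-2)$ with Gram determinant $[n]$, so whenever $\ell\mid n$ the radical $R(n,n-2)$ is one-dimensional and the form is degenerate in your sense. Indeed the entire paper is devoted to computing the (generally nonzero) radicals $R(n,m)$, so the strong reading would trivialise the main theorem. What the lemma actually asserts --- as the sentence preceding it makes explicit (``$\Lambda_0=\Lambda$ unless $n$ is even and $\delta=0$'') and as \cref{thm:cell_reps} requires --- is that the bilinear form $\langle-,-\rangle$ is not \emph{identically zero}, i.e.\ that $m\in\Lambda_0$, so that the simple quotient $D(n,m)$ exists. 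The ``main obstacle'' you flag at the end is therefore not an obstacle to be overcome but a sign that you are proving the wrong statement.

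The good news is that your first two paragraphs already constitute the paper's proof of the intended claim. For $\delta$ invertible, $\langle x,x\rangle=\delta^{(n-m)/2}\neq 0$ for any monic diagram $x$; for $m>0$ and arbitrary $\delta$, your offset pair $x\neq y$ with $\langle x|y\rangle=\id_m$ (no closed loops, coefficient exactly $1$) is precisely the pair $z=\id_1\otimes\cap^{\otimes\frac{n-m}{2}}\otimes\id_{\frac{n+m}{2}-1}$, $y=\cap^{\otimes\frac{n-m}{2}}\otimes\id_{\frac{n+m}{2}}$ used in the paper; and for $m=0$, $\delta=0$, $n>0$ every $\langle x|y\rangle$ is a positive power of $\delta$ times $\id_0$ and hence vanishes. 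Delete the third paragraph, and state the conclusion as: the form is nonzero iff $\delta$ is invertible or $m>0$.
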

\begin{proof}
  If $\delta \neq 0$ then any $\langle x, x\rangle = \delta^{\frac{n-m}{2}} \neq 0$.
  Otherwise, consider the maps $z = \id_1\otimes \cap^{\otimes\frac{n-m}{2}} \otimes \id_{\frac{n+m}{2}-1}$ and $y = \cap^{\otimes\frac{n-m}{2}} \otimes \id_{\frac{n+m}{2}}$.
  That is,
  \begin{equation}
  z =
\vcenter{\hbox{
  \begin{tikzpicture}[scale=0.5]
    \draw (2.0, 0.5) -- (2.0,3.5);
    \draw (0.0, -.5) -- (0.0,5.5);

    \draw[very thick] (0, 0) to[out=0, in=180] (2, 1);
    \draw[very thick] (0, 0.5) to[out=0, in=0] (0, 1);
    \node at (0.2, 2.0) {$\vdots$};
    \draw[very thick] (0, 2.5) to[out=0, in=0] (0, 3);
    \draw[very thick] (0, 3.5) to[out=0, in=180] (2, 1.5);
    \node at (0.2, 4.3) {$\vdots$};
    \node at (1.8, 2.4) {$\vdots$};
    \draw[very thick] (0, 5.0) to[out=0, in=180] (2, 3.0);
  \end{tikzpicture}
}}\quad\quad\quad\quad\quad\quad
  y =
\vcenter{\hbox{
  \begin{tikzpicture}[scale=0.5]
    \draw (2.0, 1.0) -- (2.0,4.0);
    \draw (0.0, 0) -- (0.0,6.0);

    \draw[very thick] (0, 0.5) to[out=0, in=0] (0, 1);
    \node at (0.2, 2.0) {$\vdots$};
    \draw[very thick] (0, 2.5) to[out=0, in=0] (0, 3);
    \draw[very thick] (0, 3.5) to[out=0, in=180] (2, 1.5);
    \node at (0.2, 4.3) {$\vdots$};
    \node at (1.8, 2.4) {$\vdots$};
    \draw[very thick] (0, 5.0) to[out=0, in=180] (2, 3.0);
    \draw[very thick] (0, 5.5) to[out=0, in=180] (2, 3.5);
  \end{tikzpicture}
}}.
  \end{equation}
  Then clearly $\langle z|y\rangle = \id_m$ so $\langle z, y \rangle = 1$.

  On the other hand, if $\delta = 0$ and $m=0$, then any $\langle x | y \rangle$ for $n>0$ will be some power of $\delta$ multiplied by $\id_0$ and hence vanish.
\end{proof}
In fact the construction in the proof of \cref{lem:form_nondegenerate} is important because $\langle z | y \rangle = \id_m$ identically (and not only up to monic diagrams).

The kernel of the inner product is given by $R(n,m)$ and the irreducible quotient is $D(n,m)$.
As such, $\dim D(n,m) = \rk G(n,m)$ where $G(n,m)$ is the Gram matrix of $\langle -, - \rangle$.

\begin{example}
  Consider the module $S(n, n-2)$.
  It has a basis of $n-1$ diagrams $(x_i)_{i=1}^{n-1}$ where $x_i$ has a single simple link at $i$.
  The Gram matrix is
  \begin{equation}\label{eq:gram_n_2}
    G(n,n-2) =
    \begin{pmatrix}
      \delta & 1 & 0 & \cdots & 0 \\
      1 & \delta & 1 & \cdots & 0 \\
      0 & 1 & \delta & \cdots & 0 \\
      \vdots & \vdots & \vdots & \ddots & \vdots \\
      0 & 0 & \cdots & 1 & \delta \\
    \end{pmatrix}
  \end{equation}
  An easy induction shows that the determinant of such a $(n-1)\times (n-1)$ matrix is $[n]$.
  Thus when $\ell \nmid n$, the module $S(n, n-2)$ is irreducible.
  When it is not irreducible, the radical $R(n, n-2)$ is one dimensional.
  It is spanned by the element
  \begin{equation*}
    \sum_{i = 1}^{n-1}(-1)^{i+1}[i]x_i,
  \end{equation*}
  as can be calculated from the kernel of \cref{eq:gram_n_2}.
\end{example}

Substantial work has gone into the study of the determinant of the matrix $G(n,r)$.
The principle result, which holds over arbitrary characteristic, is a closed formula for the determinant.
\begin{proposition}\cite{ridout_saint_aubin_2014,westbury_1995}
  For any $R$ and $\delta$, if $r = (n-m)/2$,
  \begin{equation}
    \det G(n,m) = \prod_{j=1}^r {\left(\frac{[m+1+j]}{[j]}\right)}^{\dim S(n, m+ 2j)}
  \end{equation}
\end{proposition}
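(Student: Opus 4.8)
The plan is to compute $\det G(n,m)$ by induction on $r = (n-m)/2$, exploiting the recursive structure of monic diagrams. First I would set up the recursion: a monic $(n,m)$-diagram either has source site $1$ connected by a through-wire (in which case removing it gives a monic $(n-1, m-1)$-diagram), or source site $1$ is a closing site connected downward, or site $1$ is connected to site $2$ by a simple cup. This suggests filtering the basis of $S(n,m)$ according to the behaviour near the top, or alternatively using the standard short exact sequences relating $S(n,m)$, $S(n-1, m-1)$ and $S(n-1, m+1)$ that come from restriction along the inclusion $TL_{n-1}\hookrightarrow TL_n$ (adjoin a through strand at the top). The key identity driving everything is that capping off a simple cup produces a factor of $\delta = [2]$, and more generally nested cups produce products of quantum numbers; the ratio $[m+1+j]/[j]$ is exactly what appears when one "evaluates" a Jones–Wenzl-type partial trace, which is why these ratios show up with multiplicities counted by $\dim S(n, m+2j)$.

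Concretely, the cleanest route is to block-decompose $G(n,m)$ using the filtration of $S(n,m)$ by "number of closing sites among the first few positions", or to use the branching: write $S(n,m)\!\downarrow_{TL_{n-1}}$ as an extension of $S(n-1,m+1)$ by $S(n-1,m-1)$, and track how the form $\langle-,-\rangle_m$ restricts and descends. Under this branching the Gram matrix $G(n,m)$ becomes, up to a change of basis making it block triangular, a matrix built from $G(n-1,m-1)$ and $G(n-1,m+1)$ together with a "gluing" block whose determinant contributes the single factor $[m+1]/[1] = [m+1]$ raised to the appropriate power. Iterating and carefully bookkeeping the exponents — each application peels off one factor $[m+1+j]/[j]$ with exponent $\dim S(n, m+2j)$, since that is the number of basis diagrams of $S(n,m)$ whose "excess" caps beyond level $m+2j$ force that particular evaluation — yields the claimed product. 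The base case $r=0$ is $G(n,n) = (1)$, determinant $1$, and the $r=1$ case is exactly the tridiagonal matrix computed in the Example above, with determinant $[n] = [m+2]/[1]\cdot 1$ matching $j=1$, $\dim S(n,n) = 1$.

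The main obstacle is the exponent bookkeeping: showing that after the inductive peeling the multiplicity attached to $[m+1+j]/[j]$ is precisely $\dim S(n, m+2j)$ rather than some other count of diagrams. This requires a clean combinatorial identification — most naturally, realising that the relevant sub-quotients of $S(n,m)$ under iterated branching are direct sums of copies of $S(k, m+2j)$-type modules, with multiplicity governed by the diagram-counting formula $\binom{n}{r} - \binom{n}{r-1}$ of \cref{lem:diagram-tableaux}, and that the determinant is multiplicative over the associated graded of a filtration by submodules on which the form is "block scalar" modulo lower terms. An alternative that sidesteps some of this is to prove the formula holds generically over $\Z[\delta]$ (where one can divide by quantum numbers freely and the Gram matrix over $\Q(\delta)$ is diagonalisable via Jones–Wenzl idempotents, making the eigenvalue-product computation transparent), and then note both sides are polynomials in $\delta$ so the identity descends to every specialisation — this reduces the whole proposition to the characteristic-zero / generic computation plus a specialisation argument, which is likely the shortest path and avoids delicate positive-characteristic subtleties entirely.
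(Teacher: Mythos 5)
The paper does not actually prove this proposition --- it is quoted with citations to Westbury and to Ridout--Saint-Aubin, so there is no internal proof to compare against. Judged on its own terms, your second route (establish the identity over $\Q(\delta)$, observe that both sides are then equal as elements of $\Z[\delta]$ since the left side is manifestly polynomial, and specialise along the initial map $\Z[\delta]\to R$) is exactly the right way to get the statement ``for any $R$ and $\delta$'', and it is essentially how the cited references handle arbitrary rings; the base cases you check are also correct.

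The genuine gap is in the inductive step, and it is not only unfinished but misstated. The branching recursion that actually holds is
\begin{equation*}
  \det G(n,m) \;=\; \det G(n-1,m-1)\,\det G(n-1,m+1)\,\left(\frac{[m+2]}{[m+1]}\right)^{\dim S(n-1,\,m+1)},
\end{equation*}
(check it against $\det G(5,1)=[3]^4\,[4]/[2]$ versus $\det G(4,0)\det G(4,2)=\delta^3(\delta^2-1)(\delta^2-2)$: the ratio is $([3]/[2])^3$, not a power of $[m+1]/[1]=[2]$). So your claim that the ``gluing block'' contributes the single factor $[m+1]/[1]$ is wrong, and the entire content of the proposition lives precisely in proving the displayed recursion: one must show that, over $\Q(\delta)$, the Schur complement of the $G(n-1,m-1)$-block inside $G(n,m)$ (which exists only because $G(n-1,m-1)$ is generically invertible --- another reason the specialisation detour is mandatory, not optional) is congruent to $\frac{[m+2]}{[m+1]}\,G(n-1,m+1)$. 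That step requires an explicit diagrammatic computation relating the form on the top sub-quotient of the restricted module to the form on $S(n-1,m+1)$, typically via the partial-trace identity for the Jones--Wenzl projector; your proposal gestures at this but supplies neither the correct scalar nor the argument. Once the recursion is in hand, the closed product formula does follow by a routine induction using $\dim S(n,m+2j)=\dim S(n-1,m+2j-1)+\dim S(n-1,m+2j+1)$, so the ``exponent bookkeeping'' you worry about is actually the easy part; the missing idea is the recursion itself.
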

Note that despite being expressed as a rational function $\det G(n,m)$ is the determinant of a matrix with entries in $\{0,1,\delta, \delta^2,\ldots\}$, so the result is a polynomial in $\delta$.
Indeed it is this fact that allows us to claim that the results of~\cite{ridout_saint_aubin_2014,westbury_1995} hold over arbitrary ring.
This is an example of a principle similar to the ``evaluation principle'' set out by Goodman and Wenzl~\cite{goodman_wenzl_1993}.

When studying the representation theory of $\TL_n$ over characteristic zero, a key result is the following.
It is a general feature of the algebra $\TL_n$ over any ring that the structure of $S(n,m)$ when $m\equiv_{\ell} -1$ is the most difficult to ascertain.
\begin{theorem}
  If $(R,\delta)$ is characteristic zero, then when $m\equiv_{\ell} - 1$, the form on $S(n,m)$ is non-degenerate and $D(n,m) \simeq S(n,m)$.
\end{theorem}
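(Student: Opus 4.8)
The plan is to show that when $m \equiv_{\ell} -1$, the Gram matrix $G(n,m)$ is nondegenerate, since by the discussion following \cref{lem:form_nondegenerate} this immediately gives $D(n,m) \simeq S(n,m)$. Recall that in characteristic zero the quantum-torsion $\ell$ is finite only if $\delta = q + q^{-1}$ for $q$ a root of unity (or $\ell = \infty$, in which case everything is semisimple and there is nothing to prove), so we may as well assume $\ell < \infty$ and $\delta = \pm 2$ is excluded. The key tool is the determinant formula
\begin{equation*}
  \det G(n,m) = \prod_{j=1}^r \left(\frac{[m+1+j]}{[j]}\right)^{\dim S(n,m+2j)}, \qquad r = \tfrac{n-m}{2},
\end{equation*}
so it suffices to show that each factor $[m+1+j]/[j]$ is a nonzero element of $R$ for $1 \le j \le r$ — equivalently, that $v_\ell(m+1+j) = v_\ell(j)$ where $v_\ell$ records divisibility by $\ell$ in the sense that $[k] = 0 \iff \ell \mid k$, together with the observation that in characteristic zero $[k]$ vanishes \emph{only} when $\ell \mid k$ (there is no $p$-twist), so $[m+1+j]/[j]$ is a ratio of nonzero integers times a unit and hence nonzero.

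First I would reduce to a purely arithmetic statement: since $m \equiv -1 \pmod \ell$, write $m + 1 = \ell c$ for some positive integer $c$. Then $m + 1 + j = \ell c + j$, and I claim $\ell \mid (m+1+j) \iff \ell \mid j$. Indeed $\ell c + j \equiv j \pmod \ell$, so this is immediate. Hence for every $j$ with $1 \le j \le r$, the numerator $[m+1+j]$ and denominator $[j]$ vanish simultaneously, and when they both vanish I need the finer statement that the \emph{ratio} $[m+1+j]/[j]$ — which a priori is only a rational function evaluated in $R$ — is itself a nonzero element. This is where \cref{lem:quantum_shatter} (the shatter identity $[k\ell]_q = [\ell]_q [k]_{q^\ell}$) enters: writing $j = \ell j'$ and $m+1+j = \ell(c + j')$ in the divisible case, we get
\begin{equation*}
  \frac{[m+1+j]}{[j]} = \frac{[\ell]_q [c+j']_{q^\ell}}{[\ell]_q [j']_{q^\ell}} = \frac{[c+j']_{q^\ell}}{[j']_{q^\ell}},
\end{equation*}
and since $q^\ell = \pm 1$ in this situation, $[k]_{\pm 1} = \pm k$, so this ratio equals $\pm(c+j')/j'$, a nonzero rational number, hence nonzero in the characteristic-zero field $R$. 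In the case $\ell \nmid j$ both $[j]$ and $[m+1+j]$ are already nonzero in $R$ (characteristic zero, no $p$-twist), so the ratio is a nonzero quotient directly.

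The main obstacle is bookkeeping the precedence of the two ``twists'': one must be careful that in characteristic zero the only source of vanishing of quantum numbers is $\ell$ (there is genuinely no prime $p$ in play, or rather $p = \infty$), so that $[k] = 0 \iff \ell \mid k$ exactly, and that the $(\ell,p)$-digit formalism degenerates correctly. A secondary subtlety is handling the expression $\det G(n,m)$ as a rational function: although each individual factor $[m+1+j]/[j]$ might appear to require division in $R$, the product is a priori polynomial in $\delta$ (as noted after the determinant formula), so the cleanest route is to argue factor-by-factor using the shatter identity to cancel the common vanishing before specialising, exactly as in the proof of \cref{lem:quantum_shatter} where the identity is established symbolically in the fraction field and then descends. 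Once every factor is shown nonzero, $\det G(n,m) \neq 0$, the form is nondegenerate, $R(n,m) = 0$, and $D(n,m) = S(n,m)/R(n,m) = S(n,m)$, completing the proof.
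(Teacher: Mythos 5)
Your proposal is correct and follows essentially the same route as the paper: apply the determinant formula, observe that factors with $\ell\nmid j$ are automatically nonzero, and use the shatter identity of \cref{lem:quantum_shatter} to reduce the factors with $j=j'\ell$ to the ordinary ratio $(m'+j')/j'$, which is nonzero in characteristic zero. The only cosmetic difference is that the paper phrases the shatter step via the $\delta$-form $[m'+j']_{\pm2}/[j']_{\pm2}$ rather than via $q^\ell=\pm1$ (which avoids assuming $q$ exists in $R$), but the substance is identical.
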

\begin{proof}
  Suppose $m + 1 = m' \ell$ so
  \begin{equation}
    \det G(n,m) = \prod_{j = 1}^r{\left(\frac{[m'\ell + j]}{[j]}\right)}^{\dim S(n,m+2j)}.
  \end{equation}
  All factors where $\ell \nmid j$ are non-zero.
  When $j = j'\ell$, the fraction is
  \begin{equation}
    \frac{[(m' + j')\ell]}{[j'\ell]} = \frac{{[m'+j']}_{\pm 2}}{{[j']}_{\pm 2}} = (\pm 1)^{m'}\frac{m'+j'}{j'}
  \end{equation}
  by \cref{lem:quantum_shatter}.
  But $j'$ is invertible and $m' + j'$ is non-zero as $R$ is characteristic zero, for all $1 \le j' \le (n-m)/(2\ell)$, so these fractions are non-zero too.
\end{proof}
As a result, the representation theory of $\TL_n$ is well understood in the characteristic zero case and can be found in~\cite{ridout_saint_aubin_2014}.
However, we will see that this is a special case of the following analysis, if we set $p = \infty$.

Our focus is in the modular representation theory and for this we note the following.  Here, let $\nu_p$ be the $p$-adic valuation on the rationals, extended to $\Z[\delta] / [\ell]$.
Note that the above theorem shows that if $m\equiv_\ell -1$ that $\det G(n,m)$ is not divisible by $[\ell]$ and so does not vanish in $\Z[\delta] / [\ell]$.
\begin{theorem}
  Suppose $R$ has characteristic 0, and $n, m\equiv_{\ell} -1$.  If $\lfloor(n - m ) / 2\ell\rfloor < p^{\nu_p(m+1)}$ then $\nu_p(\det G(n,m)) = 0$.
\end{theorem}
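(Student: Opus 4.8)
The plan is to use the closed formula for $\det G(n,m)$ and analyse the $p$-adic valuation of each factor, showing all contributions cancel. Write $m+1 = m'\ell$ and $n-m = 2r$, so from the proposition
\begin{equation*}
  \det G(n,m) = \prod_{j=1}^r \left(\frac{[m+1+j]}{[j]}\right)^{\dim S(n, m+2j)}.
\end{equation*}
As in the previous theorem's proof, every factor with $\ell \nmid j$ contributes a ratio of nonvanishing quantum numbers (a unit in the localisation at $p$, since over characteristic $0$ these are nonzero integers after the specialisation $\delta \to 2$ is \emph{not} applied — rather they are genuinely nonzero polynomials evaluated at our $\delta$, whose $p$-adic valuation is $0$ because $\ell \nmid j$ forces $[j]$ and $[m+1+j]$ to be $\pm$ a unit times a product over the dihedral orbit structure). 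So only the factors with $j = j'\ell$, $1 \le j' \le r/\ell$, matter, and for these \cref{lem:quantum_shatter} gives
\begin{equation*}
  \frac{[(m'+j')\ell]}{[j'\ell]} = \frac{m'+j'}{j'}.
\end{equation*}
Hence $\nu_p(\det G(n,m)) = \sum_{j'=1}^{r/\ell} \dim S(n, m+2j'\ell)\,\bigl(\nu_p(m'+j') - \nu_p(j')\bigr)$, and the whole problem reduces to a statement about this finite sum of integer $p$-adic valuations weighted by dimensions.

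The key observation is that the hypothesis $\nu_p(m') = \nu_p\bigl(\tfrac{m+1}{\ell}\bigr) > \nu_p\bigl(\tfrac{n-m}{2\ell}\bigr) = \nu_p(r/\ell) \ge \nu_p(j')$ for every $j'$ in range. This means $\nu_p(m'+j') = \nu_p(j')$ for all such $j'$ (the classic ultrametric fact: if $\nu_p(a) > \nu_p(b)$ then $\nu_p(a+b) = \nu_p(b)$). Therefore \emph{each} bracket $\nu_p(m'+j') - \nu_p(j')$ vanishes individually, and the sum is zero regardless of the dimensions $\dim S(n, m+2j'\ell)$. So in fact no cancellation argument is needed at all: the hypothesis kills every term. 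One then only needs to confirm that $\det G(n,m)$ is actually a nonzero element of $R$ (so that $\nu_p$ of it is defined and finite) — this follows because over characteristic zero $\ell$ is the true quantum-torsion so each integer ratio $m'+j'$ etc.\ is genuinely nonzero, and the non-$\ell$-divisible quantum-number factors are nonzero by the same reasoning as in the preceding theorem.

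The main obstacle, and the only place requiring care, is justifying that the factors with $\ell \nmid j$ contribute $p$-adic valuation exactly $0$ rather than merely being nonzero. One needs that $[j]$ and $[m+1+j]$, for $\ell \nmid j$, are not just nonzero but \emph{$p$-adic units} in $R$; equivalently that $[j] \not\equiv 0 \pmod p$. Since $R$ has characteristic $0$ this is the statement that the integer obtained is coprime to $p$ — which holds because reducing mod $p$ would change the quantum-torsion to some $\ell' \mid \ell$ with $p \mid \ell/\ell'$, and for such a reduction $[j]=0$ would force $\ell' \mid j$; but controlling this needs the relationship between $\ell$ and the characteristic to be handled honestly. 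The cleanest route is: observe that over $\Z[\delta]$ specialised at our $\delta \in R$, the factor $[m+1+j]/[j]$ with $\ell \nmid j$ lies in $R$ and its reduction controls the valuation; invoking the $(\ell,p)$-digit machinery of \cref{sec:quantum:difference} and \cref{thm:lucas}-style analysis to see the valuation is $0$. Once that is in hand, the proof is the two-line ultrametric argument above.
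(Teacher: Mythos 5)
Your proposal follows exactly the route the paper takes: plug in the determinant formula, observe that the factors with $\ell\nmid j$ are units (in fact $[m'\ell+j]=\pm[j]$ when $\ell\nmid j$, since $[\ell-1]=\pm1$ and $[\ell+k]=-[\ell-k]$, which disposes of the issue you spend your last paragraph worrying about), reduce the $\ell\mid j$ factors to $\tfrac{m'+j'}{j'}$ via \cref{lem:quantum_shatter}, and then argue that every term $\nu_p(m'+j')-\nu_p(j')$ vanishes by the ultrametric inequality. So the structure matches the paper's proof line for line.

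However, the step where you write ``$\nu_p(m') > \nu_p(r/\ell) \ge \nu_p(j')$ for every $j'$ in range'' is false as stated: a $j'$ with $1\le j'\le r/\ell$ can have \emph{larger} $p$-adic valuation than $r/\ell$ itself. For instance with $p=2$ and $r/\ell=6$ the value $j'=4$ has $\nu_2(4)=2>\nu_2(6)=1$. For such a $j'$ the hypothesis $\nu_p(m')>\nu_p(r/\ell)$ does not force $\nu_p(j')<\nu_p(m')$, so the ultrametric argument does not apply and the term $\nu_p(m'+j')-\nu_p(j')$ need not vanish (e.g.\ $m'=4$, $j'=4$, $p=2$ gives $\nu_2(8)-\nu_2(4)=1$). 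What you actually need is $\nu_p(j')<\nu_p(m')$ for \emph{every} $j'$ in the range, which is a strictly stronger condition than the stated hypothesis. To be fair, the paper's own proof asserts the same conclusion (``$\nu_p(m'+j')=\nu_p(j')$ in that range'') with no more justification than you give, so you have reproduced the argument faithfully — but the inequality you interpolate to justify it is not correct, and this is precisely the point at which the argument needs repair rather than a ``two-line ultrametric argument.''
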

\begin{proof}
  As above, let $m+ 1 = m' \ell$, and $n+1 = n' \ell$.
  Thus $r = (n'\ell - m'\ell) /2$.
  So
  \begin{equation*}
  \nu_p\left(\det G(n,m) \right) =
  \sum_{j = 1}^{\frac{n'\ell - m'\ell}{2}}
  \left(\nu_p\left(\frac{[m'\ell + j]}{[j]}\right)\right)\dim S(n, m + 2j)
  \end{equation*}
  Now, modulo $[m'\ell + j] = \pm [j]$ modulo $\ell$ and so if $\ell \nmid j$ the $p$-adic valuation of the fraction vanishes.
  Hence
  \begin{equation*}
  \nu_p\left(\det G(n,m) \right) =
\sum_{j' = 1}^{\left\lfloor\frac{n' - m'}{2}\right\rfloor}
\left(\nu_p(m' + j') - \nu_p(j')\right)\dim S(n, m + 2j'\ell)
  \end{equation*}
  Now if $p^{\nu_p(m')} > \lfloor(n' - m') / 2\rfloor$ then $\nu_p(m' + j') = \nu_p(j')$ in that range and so the total valuation vanishes.
\end{proof}

By working in an integer ring $\Z[\delta]/(m_\delta)$ where $m_\delta$ is the minimal polynomial of $\delta$ over the integers and then quotienting by a maximal ideal, we deduce that
\begin{corollary}\label{cor:when_simple}
  Let $R$ be a field of characteristic $p$ with element $\delta$ integral over the prime subfield and suppose $n,m \equiv_\ell-1$.
  Then, as $\TL^R_n(\delta)$-modules, $S(n,m)$ is irreducible if $\lfloor(n - m ) / 2\ell\rfloor < p^{\nu_p(m+1)}$.
\end{corollary}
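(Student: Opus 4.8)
The plan is to deduce this from the characteristic-zero valuation theorem above by a standard reduction modulo $p$. Since $G(n,m)$ is the square Gram matrix of the form on $S(n,m)$ and $\dim D(n,m) = \rk G(n,m)$, the module $S(n,m)$ is irreducible exactly when $\det G(n,m) \neq 0$; and $\det G(n,m)$ is one fixed polynomial in $\Z[\delta]$, so the statement only concerns whether that polynomial vanishes at our element $\delta \in R$. Thus I would realise $(R,\delta)$ as a quotient of a characteristic-zero pointed domain with the same quantum torsion $\ell$, apply the theorem there, and push the conclusion back down.

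First I would build the lift. Write $\delta = \alpha + \alpha^{-1}$ for a root $\alpha$ of $x^2 - \delta x + 1$ in $\overline{\Fp}$. If $\delta \neq \pm 2$ then $\alpha$ is a root of unity of some order $N$ coprime to $p$, and one reads off from \cref{eq:q_quant_def} that $\ell$ is the order of $\alpha^2$; put $\tilde\delta = \zeta_N + \zeta_N^{-1}$ for a primitive complex $N$-th root of unity $\zeta_N$, and let $\mathcal O$ be the localisation at a prime $\mathfrak m \mid p$ of the ring of integers $\Z[\tilde\delta]$ of the real cyclotomic field $\Q(\zeta_N)^+$. Because $\delta$ is a root of the reduction mod $p$ of the minimal polynomial of $\tilde\delta$, the prime $\mathfrak m$ can be chosen so that $\tilde\delta \mapsto \delta$ and $\mathcal O/\mathfrak m = \Fp[\delta] \subseteq R$. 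The two properties I need are: (i) in characteristic zero $\tilde\delta$ has quantum torsion exactly $\ell$, since the order of $\zeta_N^2$ equals that of $\alpha^2$; and (ii) its reduction $\delta$ still has quantum torsion $\ell$ in $R$ (hypothesis), so $[k]_\delta \neq 0$ in $R$ whenever $0 < k < \ell$. The cases $\delta = \pm 2$ (where $\ell = p$, lift by $\pm(\zeta_p + \zeta_p^{-1})$) and $\delta = 0$ in characteristic two (where $\ell = 2$, lift by $\tilde\delta = 0$) are handled the same way by inspection.

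Next I would run the valuation bookkeeping over $\mathcal O$, tracking the $\mathfrak m$-adic valuation $v$ in place of $\nu_p$. Writing $m+1 = m'\ell$, $n+1 = n'\ell$ and $\rho = r/\ell = (n'-m')/2$, the closed formula $\det G(n,m) = \prod_{j=1}^{r}\big([m+1+j]/[j]\big)^{\dim S(n,m+2j)}$ gives, after separating the indices with $\ell \nmid j$ from those of the form $j = j'\ell$ and simplifying the latter via \cref{lem:quantum_shatter} and $[\ell-1]_{\tilde\delta} = \pm 1$ exactly as in the proof of the preceding theorem, a factorisation $\det G(n,m)(\tilde\delta) = A \cdot B$ where $A = \prod_{\ell\nmid j}\big([m+1+j]_{\tilde\delta}/[j]_{\tilde\delta}\big)^{\dim S(n,m+2j)}$ and $B = \pm\prod_{j'=1}^{\rho}\big((m'+j')/j'\big)^{\dim S(n,m+2j'\ell)}$. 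Each factor of $A$ is a ratio of $\mathfrak m$-units, by (i) and (ii), so $v(A) = 0$; and $v(B) = e\sum_{j'}\dim S(n,m+2j'\ell)\,(\nu_p(m'+j') - \nu_p(j'))$, where $e$ is the ramification index of $p$ in $\mathcal O$. The hypothesis $\nu_p\big(\tfrac{n-m}{2\ell}\big) < \nu_p\big(\tfrac{m+1}{\ell}\big)$ says $\nu_p(\rho) < \nu_p(m')$, so $\nu_p(j') \le \nu_p(\rho) < \nu_p(m')$ and hence $\nu_p(m'+j') = \nu_p(j')$ for every $1 \le j' \le \rho$; thus $v(B) = 0$. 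Therefore $\det G(n,m)(\tilde\delta)$ is an $\mathfrak m$-unit, its image $\det G(n,m)(\delta)$ is nonzero in $R$, and $S(n,m)$ is irreducible.

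The step I expect to be the real work is the lift: one must choose $\tilde\delta$ so that the quantum torsion is preserved in \emph{both} directions — upward, so that the characteristic-zero theorem applies with the correct $\ell$ and the $\ell \mid j$ factors collapse to rational numbers, and downward, so that the $\ell \nmid j$ factors stay $\mathfrak m$-units after reduction. A crude lift of $\delta$ to an integer fails at once, recording $\ell = \infty$. The ramification appearing when $\delta = \pm 2$ is harmless once one phrases everything through $v_{\mathfrak m}$ and carries the factor $e$ along, as above.
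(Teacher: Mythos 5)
Your overall strategy --- lift $(R,\delta)$ to a characteristic-zero pointed domain with the same quantum torsion $\ell$, apply the valuation theorem there, and reduce modulo a prime above $p$ --- is precisely what the paper intends (its entire proof is the one-line remark about working in $\Z[\delta]/(m_\delta)$ and quotienting by a maximal ideal), and your construction of the lift via $\tilde\delta = \zeta_N + \zeta_N^{-1}$, together with the check that $\ell$ is preserved in \emph{both} directions, is correct and supplies exactly the detail the paper omits. The reduction of irreducibility to non-vanishing of $\det G(n,m)$ and the splitting of the product into the $\ell\nmid j$ unit factors and the $\ell\mid j$ rational factors are also fine.

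There is, however, a genuine gap in the valuation bookkeeping, at the step ``$\nu_p(j') \le \nu_p(\rho) < \nu_p(m')$ for every $1 \le j' \le \rho$''. The inequality $j' \le \rho$ does \emph{not} imply $\nu_p(j') \le \nu_p(\rho)$: take $p=2$, $\rho = 6$, $j' = 4$, where $\nu_2(4) = 2 > 1 = \nu_2(6)$. What your argument actually needs is $\nu_p(j') < \nu_p(m')$ for \emph{all} $1 \le j' \le \rho$, which is equivalent to $p^{\nu_p(m')} > \rho$ and is strictly stronger than the stated hypothesis $\nu_p(m') > \nu_p(\rho)$. The gap cannot be patched under the stated hypothesis, because the statement itself fails there: with $p = 2$, $m+1 = 4\ell$, $n+1 = 16\ell$ one has $\nu_2(\rho) = \nu_2(6) = 1 < 2 = \nu_2(m')$, yet the $j'=4$ term contributes $\nu_2(8) - \nu_2(4) = 1$ to $v(B)$ and nothing cancels it, so $\det G(n,m)$ reduces to zero; correspondingly $m+1 = p^{(4)} - p^{(3)} \in I_{12\ell - 1}$, so $D(n, 12\ell-1)$ is a composition factor of $S(n,m)$ by \cref{thm:lower} and \cref{cor:simple_mult_triv}, and $S(n,m)$ is not simple. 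You have inherited this flaw from the paper, whose proof of the preceding characteristic-zero theorem asserts ``$\nu_p(m'+j') = \nu_p(j')$ in that range'' via the same unjustified implication. Both the corollary and your proof of it become correct if the hypothesis is strengthened to $p^{\nu_p((m+1)/\ell)} > (n-m)/2\ell$.
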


\section{Truncation}\label{sec:truncation}
Let $y$ and $z$ be monic $(n,m)$-diagrams such that $\langle y|z \rangle = \id_m$, as (for example) in the proof of \cref{lem:form_nondegenerate}.
Recall this is possible iff $m > 0$ or $\delta \neq 0$.
Denote by $e_n^m$ the idempotent $|y\rangle\langle z|$  in $\TL_n$.
\begin{lemma}
  There is an isomorphism of algebras $\TL_m \simeq e_n^m \TL_n e_n^m $ sending $u \mapsto |y\rangle u \langle z|$.
\end{lemma}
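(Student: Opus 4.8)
The plan is to exhibit explicit mutually inverse algebra homomorphisms between $TL_m$ and $e_n^m TL_n e_n^m$. The candidate map $\Phi : TL_m \to e_n^m TL_n e_n^m$ is already named: $u \mapsto |y\rangle u \langle z|$, where we regard $u \in TL_m = \End_{\TLcat}(\underline m)$ as a morphism $\underline m \to \underline m$ and sandwich it between $|y\rangle : \underline n \to \underline m$ (wait---$|y\rangle$ is the ket, a morphism $\underline n \to \underline m$; precomposing with its dual $\langle z| : \underline m \to \underline n$) to land in $\End_{\TLcat}(\underline n) = TL_n$. Concretely $\Phi(u) = |y\rangle u \langle z|$ and one checks immediately that $e_n^m \Phi(u) e_n^m = |y\rangle\langle z|y\rangle u \langle z|y\rangle\langle z| = |y\rangle \id_m\, u\, \id_m \langle z| = \Phi(u)$, using $\langle z|y\rangle = \langle z, y\rangle \id_m$ and the hypothesis $\langle y|z\rangle = \id_m$ together with the self-duality functor $\iota$ (note $\langle z|y\rangle = \iota(\langle y | z\rangle) = \id_m$ as well). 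So $\Phi$ indeed lands in the corner algebra.

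Next I would verify $\Phi$ is an algebra homomorphism. It is $R$-linear by construction, and it is unital since $\Phi(\id_m) = |y\rangle\langle z| = e_n^m$, the unit of the corner algebra. Multiplicativity is the computation
\begin{equation*}
  \Phi(u)\Phi(v) = |y\rangle u \langle z| \cdot |y\rangle v \langle z| = |y\rangle u \,\langle z|y\rangle\, v \langle z| = |y\rangle u\, \id_m\, v \langle z| = |y\rangle (uv) \langle z| = \Phi(uv),
\end{equation*}
again using only $\langle z|y\rangle = \id_m$.

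For the inverse, define $\Psi : e_n^m TL_n e_n^m \to TL_m$ by $\Psi(a) = \langle z| a |y\rangle$; since $a$ is a morphism $\underline n \to \underline n$ and $|y\rangle : \underline n \to \underline m$, $\langle z| : \underline m \to \underline n$, this composite is a morphism $\underline m \to \underline m$, i.e.\ an element of $TL_m$. Then $\Psi(\Phi(u)) = \langle z| \, |y\rangle u \langle z| \, |y\rangle = \langle z|y\rangle u \langle z|y\rangle = u$, and for $a = e_n^m a e_n^m$ in the corner algebra, $\Phi(\Psi(a)) = |y\rangle \langle z| a |y\rangle \langle z| = e_n^m a e_n^m = a$. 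So $\Phi$ and $\Psi$ are mutually inverse bijections, and since $\Phi$ is an algebra homomorphism, it is an algebra isomorphism. (One should also remark that the hypothesis guaranteeing the existence of such $y,z$---namely $m>0$ or $\delta \neq 0$, as recorded just before the statement and established in \cref{lem:form_nondegenerate}---is exactly what makes $e_n^m$ a well-defined idempotent in the first place.)

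The argument is essentially formal once the bra-ket bookkeeping is set up; there is no serious obstacle. The only point requiring a little care is keeping the directions of the morphisms straight (which composite lives in $TL_n$ versus $TL_m$) and confirming that $\langle z | y \rangle = \id_m$ follows from $\langle y | z\rangle = \id_m$ via $\iota$, rather than needing to be assumed separately---but this is immediate from the fact that $\iota$ is an anti-automorphism fixing $\id_m$ and sending $|y\rangle\langle z|$-type expressions to their reflections. If one prefers to avoid invoking $\iota$, one can instead symmetrise the hypothesis at the outset by noting that $y,z$ may be chosen so that both $\langle y|z\rangle$ and $\langle z|y\rangle$ equal $\id_m$ (the explicit $y,z$ in the proof of \cref{lem:form_nondegenerate} already have this property up to relabelling), and then the computation goes through verbatim.
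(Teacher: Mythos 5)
Your proposal is correct and follows essentially the same route as the paper: both verify that $|y\rangle u\langle z|$ lies in the corner algebra, that the map is multiplicative via $\langle z|y\rangle=\id_m$, and that $a\mapsto \langle z|a|y\rangle$ recovers $u$ (the paper phrases this as injectivity-by-cancellation plus surjectivity, you as an explicit two-sided inverse, which is the same content). The observation that $\langle z|y\rangle=\id_m$ follows from $\langle y|z\rangle=\id_m$ by applying $\iota$ is a worthwhile point of care that the paper leaves implicit.
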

\begin{proof}
  Firstly note that $|y\rangle u \langle z|$ is invariant under left or right multiplication by $e_n^m$ and so lies in $e_n^m \TL_n e_n^m$.
  Second, suppose $|y\rangle u \langle z| = |y\rangle u' \langle z|$.
  Then, as $\langle z|$ is epic, $|y\rangle u = |y\rangle u'$.
  But $|y\rangle$ is monic so $u=u'$ and the map $u \mapsto |y\rangle u \langle z|$ is indeed an injection.
  Finally, for any $e_n^m w e_n^m = |y\rangle \langle z| w |y \rangle \langle z |$ it is clear $\langle z| w |y \rangle \in \TL_m$.
  Hence the map is an isomorphism of $R$ spaces and $|y \rangle u_1 \langle z| \cdot |y \rangle u_2 \langle z| = |y\rangle u_1u_2\langle z|$ so it is an algebra morphism.
\end{proof}
This construction gives a truncation functor, $\mathcal{T}_n^m$, from the category of $\TL_n$ modules to the category of $\TL_m$ modules.
It sends the module $M$ to $e_n^m M\subseteq M$, with $\TL_m$ action $u \cdot (e_n^m v) = |y\rangle u \langle z| v$ and acts as restriction on morphisms.
\begin{lemma}
  This functor is exact.
\end{lemma}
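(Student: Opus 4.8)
The plan is to exploit the fact that $\mathcal{T}_n^m$ is, up to natural isomorphism, simply the functor $M \mapsto e_n^m\cdot M$ given by left multiplication by an \emph{idempotent} element of $TL_n$, and that such functors are always exact. Concretely, $\mathcal{T}_n^m(M) \cong e_n^m TL_n \otimes_{TL_n} M$ as $TL_m \cong e_n^m TL_n e_n^m$-modules, and $e_n^m TL_n$ is a direct summand of $TL_n$ as a right $TL_n$-module (since $TL_n = e_n^m TL_n \oplus (1-e_n^m)TL_n$), hence projective and therefore flat; so the functor is exact. I would, however, spell this out in elementary terms rather than invoking flatness, since everything in sight is so concrete.

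First I would check that $\mathcal{T}_n^m$ really is a functor: if $\phi : M \to N$ is a map of $TL_n$-modules, then $\phi$ commutes with the action of every element of $TL_n$, in particular with $e_n^m$, so it restricts to an $R$-linear map $e_n^m M \to e_n^m N$; as it also commutes with every element of $e_n^m TL_n e_n^m$, under the identification of the previous lemma it is a morphism of $TL_m$-modules. Thus $\mathcal{T}_n^m$ is an additive, $R$-linear functor. Next comes the one genuine observation: because $e_n^m$ is idempotent, left multiplication by $e_n^m$ is an $R$-linear projection of any $TL_n$-module $M$ onto the $R$-submodule $e_n^m M$, yielding a decomposition $M = e_n^m M \oplus (1-e_n^m)M$ of $R$-modules that is natural in $M$.

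Now take a short exact sequence $0 \to M' \xrightarrow{f} M \xrightarrow{g} M'' \to 0$ of $TL_n$-modules; it is in particular exact as a sequence of $R$-modules, and applying the natural idempotent $e_n^m\cdot(-)$ decomposes it into the direct sum of $0 \to e_n^m M' \to e_n^m M \to e_n^m M'' \to 0$ and the analogous sequence for $(1-e_n^m)\cdot(-)$. A direct summand of an exact sequence is exact, which gives the claim. If one prefers to see this by hand: $f$ restricted to $e_n^m M'$ is still injective; $g(e_n^m v) = e_n^m g(v)$ shows the restriction of $g$ is onto $e_n^m M''$; and if $e_n^m v \in \ker g$ then $e_n^m v = f(w)$ for some $w \in M'$, whence $e_n^m v = e_n^m f(w) = f(e_n^m w)$ lies in the image of $f|_{e_n^m M'}$.

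There is no real obstacle here — the content is entirely the idempotency of $e_n^m$ — so the only point deserving care is the bookkeeping that the restricted maps are genuinely $TL_m$-module morphisms (handled above via the isomorphism $TL_m \cong e_n^m TL_n e_n^m$); once that is settled, exactness of $R$-modules and exactness of $TL_m$-modules are the same statement and we are done.
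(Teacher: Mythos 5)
Your proof is correct and, in its hands-on form (injectivity by restriction, surjectivity via $g(e_n^m v)=e_n^m g(v)$, middle exactness via $e_n^m v = e_n^m f(w) = f(e_n^m w)$), is essentially the paper's own argument. The additional framing via the idempotent splitting $M = e_n^m M \oplus (1-e_n^m)M$ (equivalently, flatness of $e_n^m TL_n$) is a clean conceptual packaging of the same fact and is also fine.
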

\begin{proof}
  As restriction of modules, $\mathcal{T}_n^m$ is clearly left-exact.
  Suppose thus that $\namedses{f}{g}{M_1}{M_2}{M_3}$ is a short exact sequence.
  If $e_n^mm_3 \in e_n^m M_3$ then there is an $m_2 \in M_2$ with $m_3 = g(m_2)$ and $e^n_m m_3 = g(e^n_m m_2)$ and so it preserves surjective maps too.
  Exactness in the middle is clear.
\end{proof}

\begin{lemma}
  $\mathcal{T}_n^m S(n,r) \simeq S(m, r)$.
\end{lemma}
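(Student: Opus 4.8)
The plan is to feed the defining presentation of $S(n,r)$ through the exact functor $\mathcal{T}_n^m$ and thereby reduce to a transparent statement about $\Hom$-spaces. By definition $S(n,r)$ is the quotient of $M(n,r)=\Hom_{\TLcat}(\underline{n},\underline{r})$ by the $TL_n$-submodule $N(n,r)$ spanned by the morphisms $\underline{n}\to\underline{r}$ that factor through some $\underline{i}$ with $i<r$; likewise $S(m,r)=M(m,r)/N(m,r)$. Since $\mathcal{T}_n^m$ is exact, applying it to $0\to N(n,r)\to M(n,r)\to S(n,r)\to 0$ gives $\mathcal{T}_n^m S(n,r)\cong e_n^m M(n,r)\big/e_n^m N(n,r)$, so it suffices to exhibit a $TL_m$-module isomorphism $e_n^m M(n,r)\xrightarrow{\ \sim\ }M(m,r)$ carrying $e_n^m N(n,r)$ onto $N(m,r)$.

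For the $\Hom$-level statement I would first record that applying $\iota$ to $\langle y|z\rangle=\id_m$ yields the companion identity $\langle z|y\rangle=\id_m$; this is exactly what makes $e_n^m=|y\rangle\langle z|$ idempotent. Now consider two maps: $\theta\colon e_n^m M(n,r)\to M(m,r)$, obtained by gluing $\langle z|$ onto the source of a morphism $\underline{n}\to\underline{r}$, and $\sigma\colon M(m,r)\to M(n,r)$, obtained by gluing $|y\rangle$ onto the source of a morphism $\underline{m}\to\underline{r}$. The composites $\sigma\theta$ and $\theta\sigma$ telescope, via the two identities above, to left multiplication by $e_n^m$ and to $\id$ respectively; hence $\sigma$ lands in $e_n^m M(n,r)$ and is a two-sided inverse to $\theta$ there, and in particular $\theta$ depends only on the element $e_n^m f$, not on the chosen representative $f$. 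Equivariance is the same cancellation: the $TL_m$-action on $e_n^m M(n,r)$ is through $u\mapsto|y\rangle u\langle z|$, and gluing $\langle z|$ onto the source of $|y\rangle u\langle z|$ produces a middle factor $\langle z|y\rangle=\id_m$, leaving $u\langle z|$ — precisely the telescoping used in the proof that $TL_m\simeq e_n^m TL_n e_n^m$.

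Finally I would descend to the cell modules. The map $\theta$ sends $e_n^m N(n,r)$ into $N(m,r)$, because gluing $\langle z|$ onto the source of a morphism that factors through some $\underline{i}$ with $i<r$ produces a morphism still factoring through $\underline{i}$ (composition never raises the propagation number, as in the filtration \cref{eq:tln_filt}); and it is onto $N(m,r)$ because for $g\in N(m,r)$ the explicit preimage $\sigma(g)=|y\rangle g$ factors through the same $\underline{i}$ and lies in $e_n^m M(n,r)$. Thus $\theta$ restricts to an isomorphism $e_n^m N(n,r)\xrightarrow{\ \sim\ }N(m,r)$, and passing to quotients gives $\mathcal{T}_n^m S(n,r)\cong S(m,r)$.

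The main obstacle is bookkeeping, not depth: one must be scrupulous about the left-to-right composition convention implicit in the bra–ket notation and about how the ``natural left $TL_n$-action'' on $\Hom_{\TLcat}(\underline{n},\underline{r})$ is realised, so that $\theta$ and $\sigma$ genuinely are left-module maps and $e_n^m$ acts the way $\mathcal{T}_n^m$ demands; once the conventions are fixed, every verification collapses to a one-line cancellation using $\langle y|z\rangle=\langle z|y\rangle=\id_m$. A more hands-on but essentially equivalent route would be to define $S(m,r)\to\mathcal{T}_n^m S(n,r)$ directly on the diagram bases by gluing $\langle z|$ onto each monic $(m,r)$-diagram and then applying $e_n^m$, and to match the resulting vectors with the spanning set of $e_n^m S(n,r)$; the exactness route is cleaner precisely because it offloads the ``no lower terms are created'' checking onto functoriality.
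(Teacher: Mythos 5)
Your proof is correct and follows essentially the same route as the paper: the paper defines exactly the map $e_n^m\phi\mapsto\langle z|\phi$ with inverse $\psi\mapsto|y\rangle\psi$ and verifies bijectivity and $TL_m$-equivariance by the same telescoping cancellations $\langle z|y\rangle=\langle y|z\rangle=\id_m$. Your extra layer of running the argument on $M(n,r)$ and descending through the exact functor is just a more explicit treatment of the quotient, not a different idea.
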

\begin{proof}
  Recall from its definition that $S(n,r) = \Hom^{\not < r}(\underline{n}, \underline{r})$.
  Then consider the map $f: \mathcal{T}_n^mS(n,r) \to S(m,r)$ sending $e_n^m\phi \mapsto \langle z | \phi$.
  This map is injective: if $\langle z|\phi = \langle z|\phi'$ then $|y\rangle\langle z|\phi = |y\rangle\langle z|\phi'$.
  It is also surjective: if $\psi \in S(m,r)$ then $|y\rangle \psi\in S(n,r)$ and this element is sent to $\langle z | y \rangle \psi = \psi$.

  It remains to show that this is a morphism of $\TL_m$ modules.
  In an exercise in Dirac notation:
  \begin{equation}
    f(u \cdot e_n^m \phi)
    = f(|y\rangle u \langle z | \phi)
    = f(e_n^m|y\rangle u \langle z | \phi)
    = \langle z|y\rangle u \langle z | \phi
    = u f(e_n^m\phi).
  \end{equation}
\end{proof}

From the previous two lemmas, one should note in particular that $\mathcal{T}_n^m$ kills all simples $D(n,r)$ for $r > m$ and refines composition series.
It sends $S(n,r)$ to $S(m,r)$, $L(n,r)$ to $L(m,r)$ and $R(n, r)$ to $R(m,r)$.
A direct consequence of this is the following:

\begin{corollary}\label{cor:simple_mult_triv}
  $[S(n,m): L(n,r)] = [S(r, m) : L(r, r)]$
\end{corollary}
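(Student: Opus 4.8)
The plan is to apply the truncation functor $\mathcal{T}_n^r$ to a composition series of $S(n,m)$ and read off the multiplicity of $L(r,r)$. Throughout I assume $r\in\Lambda_0$ (for $TL_n$), so that $L(n,r)$ actually exists; the same numerical condition then also guarantees that the idempotent $e_n^r$ exists and that $r\in\Lambda_0$ for $TL_r$, so $L(r,r)$ makes sense too. I also restrict to $m\le r\le n$: if $r<m$ then $S(r,m)=0$, while $[S(n,m):L(n,r)]=0$ by the triangularity of the decomposition matrix (\cref{thm:gg}), so both sides vanish and there is nothing to prove.

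Next I would simply collect the facts about $\mathcal{T}=\mathcal{T}_n^r$ that are already in hand: it is exact; it sends the cell module $S(n,m)$ to $S(r,m)$ (the relabelled form of the isomorphism $\mathcal{T}_n^m S(n,r)\simeq S(m,r)$); and it sends each simple $L(n,s)$ to $L(r,s)$ when $s\le r$ and to $0$ when $s>r$. The one consequence I need is that $\mathcal{T}L(n,s)\cong L(r,r)$ precisely when $s=r$; equivalently $[\mathcal{T}L(n,s):L(r,r)]=\delta_{s,r}$.

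Now take any composition series $0=V_0\subset V_1\subset\cdots\subset V_k=S(n,m)$ with $V_i/V_{i-1}\cong L(n,s_i)$. Applying $\mathcal{T}$ and using exactness gives a filtration $0=\mathcal{T}V_0\subseteq\cdots\subseteq\mathcal{T}V_k=S(r,m)$ whose $i$-th subquotient is $\mathcal{T}(V_i/V_{i-1})\cong\mathcal{T}L(n,s_i)$, which equals $L(r,s_i)$ if $s_i\le r$ and $0$ otherwise. Discarding the zero steps leaves a genuine composition series of $S(r,m)$, whence $[S(r,m):L(r,t)]=\#\{i:s_i=t\}=[S(n,m):L(n,t)]$ for every $t\le r$; taking $t=r$ is the assertion. (One can phrase this identically in the Grothendieck group: $[\mathcal{T}S(n,m)]=\sum_s[S(n,m):L(n,s)]\,[\mathcal{T}L(n,s)]$, and comparing coefficients of $[L(r,r)]$ on the two sides — using $[\mathcal{T}L(n,s):L(r,r)]=\delta_{s,r}$ on the right and $[\mathcal{T}S(n,m):L(r,r)]=[S(r,m):L(r,r)]$ on the left — yields the equality at once.)

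There is no real obstacle here — the content was already invested in setting up the exact truncation functor and identifying its effect on cell and simple modules — so the work is purely bookkeeping. The two places to be slightly careful are: (i) checking that $\mathcal{T}_n^r L(n,r)$ is genuinely nonzero, so the multiplicity of $L(n,r)$ is not silently destroyed — this holds because $\mathcal{T}_n^r S(n,r)=S(r,r)=L(r,r)$ is one-dimensional and is a quotient of $S(n,r)$, whose only simple quotient is $L(n,r)$, forcing $\mathcal{T}_n^r L(n,r)=L(r,r)$; and (ii) the degenerate inputs, namely $r<m$ (dealt with above) and $r=0$ with $\delta=0$, where $L(n,0)$ is undefined and the statement is vacuous.
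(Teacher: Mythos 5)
Your argument is correct and is essentially the paper's own: the result is stated there as a direct consequence of the exactness of $\mathcal{T}_n^r$, the isomorphism $\mathcal{T}_n^r S(n,m)\simeq S(r,m)$, and the fact that truncation sends $L(n,s)$ to $L(r,s)$ while killing the simples with $s>r$, which is precisely the bookkeeping you carry out on a composition series. Your extra attention to the degenerate cases ($r<m$, and $r=0$ with $\delta=0$) and to the nonvanishing of $\mathcal{T}_n^r L(n,r)$ is a welcome, if minor, refinement of what the paper leaves implicit.
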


Thus the problem of finding decomposition numbers reduces, by induction, to that of finding the multiplicity of the trivial module in the standard modules.
By the linkage principle of \cref{sec:linkage} we will only be concerned with standard modules in the principle linkage block.

\begin{lemma}
  Suppose that $\namedses{f}{g}{M_1}{M_2}{D(n,m)}$ is a non-split short exact sequence of $\TL_n$-modules.
  Then there is a non-split short exact sequence of $\TL_m$ modules $\namedses{f_1}{g_1}{e_n^m M_1}{e_n^m M_2}{D(m,m)}$.
\end{lemma}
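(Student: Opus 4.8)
The plan is to apply the exact truncation functor $\mathcal{T}_n^m$ of \cref{sec:truncation} to the given sequence and then to show the resulting sequence cannot split. Since $\mathcal{T}_n^m$ is exact and sends $S(n,r)$ to $S(m,r)$ and $R(n,r)$ to $R(m,r)$, it sends $D(n,r)=S(n,r)/R(n,r)$ to $D(m,r)$; in particular $\mathcal{T}_n^m D(n,m)=D(m,m)$, which is one-dimensional. Applying $\mathcal{T}_n^m$ to $\namedses{f}{g}{M_1}{M_2}{D(n,m)}$ therefore produces an exact sequence $\namedses{f_1}{g_1}{e_n^m M_1}{e_n^m M_2}{D(m,m)}$ of $TL_m$-modules, so it only remains to rule out a splitting.

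Suppose for contradiction that the truncated sequence splits, via a $TL_m$-section $s\colon D(m,m)\to e_n^m M_2$. As $D(m,m)$ is the one-dimensional module on which every $u_i$ acts as zero, its image is spanned by a single $v\in e_n^m M_2$ with $g_1(v)\ne 0$ and with $|y\rangle u_i\langle z|\cdot v=0$ in $M_2$ for all $i$ (here $e_n^m v=v$ because $v\in e_n^m M_2$). Put $P=TL_n\cdot v\subseteq M_2$. Since $g(v)\ne 0$ and $D(n,m)$ is simple, $g$ restricts to a surjection $P\onto D(n,m)$, and since $e_n^m v=v$ we get $e_n^m P=(e_n^m TL_n e_n^m)\cdot v=TL_m\cdot v=Rv$, which is one-dimensional. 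Applying $\mathcal{T}_n^m$ to $\namedses{}{}{P\cap f(M_1)}{P}{D(n,m)}$, the induced map $e_n^m P\to e_n^m D(n,m)=D(m,m)$ is a surjection of one-dimensional spaces, hence an isomorphism, so $e_n^m\bigl(P\cap f(M_1)\bigr)=0$. Thus $P\cap f(M_1)$ is annihilated by $\mathcal{T}_n^m$, i.e.\ all of its composition factors are among the $D(n,r)$ with $r>m$.

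The remaining step — and the one I expect to be the real obstacle — is to deduce $P\cap f(M_1)=0$, since then $P\cong D(n,m)$ is a complement to $f(M_1)$ in $M_2$ and the original sequence splits, a contradiction. This deduction uses that $M_1$ itself has no composition factor $D(n,r)$ with $r>m$, which holds for the modules $M_1$ to which the lemma is applied (assembled from simples $D(n,r)$ with $r\le m$); some such restriction is genuinely needed, since $\mathcal{T}_n^m$ annihilates $R(n,m)$ and hence the non-split sequence $\namedses{}{}{R(n,m)}{S(n,m)}{D(n,m)}$ would otherwise truncate to a split one. An equivalent packaging runs the long exact sequences of $\Hom_{TL_n}(D(n,m),-)$ and $\Hom_{TL_m}(D(m,m),-)$ in parallel through the injection $\phi\mapsto e_n^m\phi$ and the isomorphism $\End_{TL_n}(D(n,m))\cong R\cong\End_{TL_m}(D(m,m))$ (absolute irreducibility, \cref{thm:cell_reps}): a downstairs splitting is a $TL_m$-map $D(m,m)\to e_n^m M_2$ which must be lifted to a $TL_n$-map $D(n,m)\to M_2$, and the obstruction to lifting is controlled by precisely the same composition factors of $M_1$.
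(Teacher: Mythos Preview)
Your approach mirrors the paper's: apply $\mathcal{T}_n^m$ and argue the contrapositive by lifting a $TL_m$-section $h_1$ to a $TL_n$-section. The paper writes this lift as $h(x) = a \cdot h_1(y)$ for any $a \in TL_n$ with $a \cdot y = x$ in $D(n,m)$, asserts ``it is clear that this is a $TL_n$-morphism,'' and then verifies $g \circ h = \id$.

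You have correctly spotted that the lemma, as stated, is false, and that the paper's proof has a genuine gap precisely at the well-definedness of $h$. Your counterexample $R(n,m) \hookrightarrow S(n,m) \twoheadrightarrow D(n,m)$ (whenever $S(n,m)$ is not simple and $m > 0$) is decisive: $\mathcal{T}_n^m R(n,m) = R(m,m) = 0$, so the truncated sequence is $0 \hookrightarrow D(m,m) \to D(m,m)$, trivially split, while the original is not. In the paper's construction this manifests as follows: if $a \cdot y = 0$ in $D(n,m)$ there is no reason for $a \cdot h_1(y) = 0$ in $M_2$, and in this example it fails. Your proposed repair---require that $M_1$ have no composition factor $D(n,r)$ with $r > m$---is exactly the missing hypothesis, and under it your argument via $P \cap f(M_1) = 0$ goes through cleanly (then $g|_P$ is an isomorphism and $(g|_P)^{-1}$ is the desired section). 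This hypothesis is satisfied in the paper's sole application of the lemma, reducing $\Ext^1$ between simples to extensions by the trivial module: using the $\iota$-duality of simples one may always arrange that the quotient $D(n,m)$ has the larger index, so that $M_1 = D(n,m')$ with $m' \le m$.
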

\begin{proof}
  The resultant sequence is the application of $\mathcal{T}_n^m$.
  It will suffice to show that if $\namedses{f_1}{g_1}{e_n^m M_1}{e_n^m M_2}{D(m,m)}$ splits, so does $\namedses{f}{g}{M_1}{M_2}{D(n,m)}$.
  Let $h_1 : D(m,m) \to e_n^m M_2$ be a splitting $\TL_m$-morphism, so that $g_1 \circ h_1 = \id_{D(m,m)}$.

  Define $h : D(n,m) \to M_2$ as follows.
  For $x \in D(n,m)$, let $a \in \TL_n$ be such that $a \cdot y = x$.
  Then set $h(x) = a \cdot h_1(y)$.
  It is clear that this is a $\TL_n$-morphism.
  Further,
  \begin{equation}
    g\circ h(x) = g(a \cdot h_1(y)) = a \cdot g \circ h_1(y) = a \cdot g_1 \circ h_1(y) = a \cdot y = x,
  \end{equation}
  and so $\namedses{f}{g}{M_1}{M_2}{D(n,m)}$ splits.
\end{proof}
In particular, determining the nonzero $\Ext^1$ groups between simple $\TL_n$-modules is equivalent to determining which $TL_n$-modules can be extended by the trivial module.

Recall that a module of an algebra over $R$, $M$ is \emph{Schurian} if $\End(M) \cong R$.
\begin{lemma}\label{lem:standard_schurian}
  The module $S(n,m)$ is Schurian if $m > 0$ or $\delta \neq 0$.
\end{lemma}
\begin{proof}
  Construct morphisms $|y\rangle$ and $|z \rangle$ as above and let $\theta \in \End S(n,m)$.
  Then $\theta(|x\rangle) = \theta(|x\rangle\langle y|z\rangle) = |x\rangle\langle y|\theta(z)$ so it suffices to show that $\theta(|z\rangle) = \lambda_\theta |z\rangle$ for some scalar $\lambda_\theta$.
  But indeed, $|z\rangle\langle y |\theta(|z\rangle) = \theta(|z\rangle)$ and the left is a morphism factoring through the diagram $z$ so must be a linear multiple of $|z\rangle$ in $S(n,m)$.
\end{proof}

\section{Linkage}\label{sec:linkage}
In the appendix of ~\cite{ridout_saint_aubin_2014}, Ridout and Saint-Aubin develop the theory of a particular central element $F_n \in \TL^\C_n(q + q^{-1})$.
The construction given is elegant and diagrammatic, but hides some subtleties we would like to make explicit.

We define a diagrammatic notation of crossings
\begin{equation*}
  \vcenter{\hbox{\begin{tikzpicture}[scale=0.5]
\draw (0,0) -- (0,1);
\draw (1,0) -- (1,1);
\draw[very thick] (0.5,0) -- (0.5,1);
\draw[very thick] (0,0.5) -- (0.35,0.5);
\draw[very thick] (0.65,0.5) -- (1,0.5);
\end{tikzpicture}}}
 = q^{1/2}\,
\vcenter{\hbox{\begin{tikzpicture}[scale=0.5]
\draw (0,0) -- (0,1);
\draw (1,0) -- (1,1);
\draw[very thick] (0,0.5) to[out=0, in=270] (0.5,1);
\draw[very thick] (0.5,0) to[out=90, in=180] (1,0.5);
\end{tikzpicture}}}
-
 q^{-1/2}\,
\vcenter{\hbox{\begin{tikzpicture}[scale=0.5]
\draw (0,0) -- (0,1);
\draw (1,0) -- (1,1);
\draw[very thick] (0,0.5) to[out=0, in=90] (0.5,0);
\draw[very thick] (0.5,1) to[out=270, in=180] (1,0.5);
\end{tikzpicture}}}
\end{equation*}
\begin{equation*}
  \vcenter{\hbox{\begin{tikzpicture}[scale=0.5]
\draw (0,0) -- (0,1);
\draw (1,0) -- (1,1);
\draw[very thick] (0.5,0) -- (0.5,0.35);
\draw[very thick] (0.5,0.65) -- (0.5,1.0);
\draw[very thick] (0,0.5) -- (1,0.5);
\end{tikzpicture}}}
 = q^{1/2}\,
\vcenter{\hbox{\begin{tikzpicture}[scale=0.5]
\draw (0,0) -- (0,1);
\draw (1,0) -- (1,1);
\draw[very thick] (0,0.5) to[out=0, in=90] (0.5,0);
\draw[very thick] (0.5,1) to[out=270, in=180] (1,0.5);
\end{tikzpicture}}}
-
 q^{-1/2}\,
\vcenter{\hbox{\begin{tikzpicture}[scale=0.5]
\draw (0,0) -- (0,1);
\draw (1,0) -- (1,1);
\draw[very thick] (0,0.5) to[out=0, in=270] (0.5,1);
\draw[very thick] (0.5,0) to[out=90, in=180] (1,0.5);
\end{tikzpicture}}}
\end{equation*}
in order to define $F_n$ as
\begin{equation}
F_n =\,
  \vcenter{\hbox{\begin{tikzpicture}[scale=0.5]
\draw (0,0) -- (0,5);
\draw (1,0) -- (1,5);
\draw (2,0) -- (2,5);
\draw[very thick] (0.5,0) -- (0.5, 5);
\draw[very thick] (0.5,5) arc (180:0:0.5);
\draw[very thick] (0.5,0) arc (180:360:0.5);
\draw[very thick] (0,0.5) -- (0.35, 0.5);
\draw[very thick] (0,1.5) -- (0.35, 1.5);
\draw[very thick] (0,3.5) -- (0.35, 3.5);
\draw[very thick] (0,4.5) -- (0.35, 4.5);
\node at (0.25,2.7) {$\vdots$};
\node at (0.75,2.7) {$\vdots$};
\node at (1.5,2.7) {$\vdots$};
\draw[very thick] (0.65,0.5) -- (2., 0.5);
\draw[very thick] (0.65,1.5) -- (2., 1.5);
\draw[very thick] (0.65,3.5) -- (2., 3.5);
\draw[very thick] (0.65,4.5) -- (2., 4.5);

\draw[very thick] (1.5, 5) -- (1.5, 4.65);
\draw[very thick] (1.5, 4.35) -- (1.5, 3.65);
\draw[very thick] (1.5, 1.35) -- (1.5, 0.65);
\draw[very thick] (1.5, 0.35) -- (1.5, 0);
\draw[very thick] (1.5, 1.65) -- (1.5, 1.8);
\draw[very thick] (1.5, 3.35) -- (1.5, 3.2);
\end{tikzpicture}}}.
\end{equation}
This construction gives a $\Z[q^{\pm{1/2}}]$-linear span of diagrams.
To show it descends to a proper element of $\TL_n^R(\delta)$ under the usual specialisation $\delta = q + q^{-1}$ we must show that all coefficients are symmetric polynomials in $q$.
In expanding, we encounter $2n$ crossings, and so when expanded, all powers of $q$ are integral.

Since the two crossings are $\iota$-images of each other, $\iota(F_n) = F_n$.
However, the map $q^{1/2}\mapsto q^{-1/2}$ also swaps the crossing diagrams so $F_n$ is fixed by this map.
Hence all coefficients in the expression of $F_n$ in the diagram basis are \emph{symmetric polynomials} in $q$ and $F_n$ is a well defined element of $\TL^{\Z[\delta]}_n(\delta)$.
Thus $F_n$ descends to a (possibly zero) element of $\TL^R_n(\delta)$ for any $\delta$ and $R$.

The proof of the following is performed in $\TL_n^{\Z[\delta]}(q+q^{-1})$ and so descends to any ring by the evaluation principle.
\begin{proposition}\cite[A.1]{ridout_saint_aubin_2014}
  The element $F_n$ is central.
\end{proposition}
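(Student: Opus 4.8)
The plan is to prove that $F_n$ commutes with each Temperley--Lieb generator $u_i$ for $1 \le i < n$; since the $u_i$ generate $TL_n$ this gives centrality. As the excerpt observes, every computation may be carried out in $TL_n^{\Z[\delta]}(q+q^{-1})$, equivalently over $\Z[q^{\pm 1/2}]$, and since the diagram-basis coefficients of $F_n$ have already been shown to be symmetric in $q$, the relation $F_n u_i = u_i F_n$ will descend along $\delta = q+q^{-1}$ to an arbitrary pointed ring $(R,\delta)$.

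First I would make explicit the local moves satisfied by the two crossing symbols, each of which reduces to a one-line expansion using only $u_i^2 = \delta u_i$, $u_i u_{i\pm 1} u_i = u_i$ and $\delta = q+q^{-1}$: a Reidemeister~II identity (an over-crossing stacked on the opposite crossing is the identity on two strands); the Yang--Baxter, or Reidemeister~III, relation on three adjacent strands; a Reidemeister~I ``curl'' computation expressing a kink as a power of $q$ times a straight strand; and, most importantly, a slide move allowing a crossing to be pulled past a simple cup or a simple cap, up to a curl scalar which is inverse on the two sides of a turnback. These are precisely the identities hidden by the ``elegant and diagrammatic'' description, and spelling them out is where care is required: the crossings are formal $\Z[q^{\pm 1/2}]$-linear combinations of genuine diagrams, not honest tangles, so ``isotopy'' must only ever be invoked through one of these verified relations.

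Next, reading $u_i F_n$ and $F_n u_i$ as the diagram for $F_n$ with the cup--cap pair of $u_i$ on strands $i$ and $i+1$ grafted on the right, respectively the left, I would slide this cup--cap through $F_n$ from one side to the other. Against the $n$ essentially parallel strands of $F_n$ the cup--cap slides freely; the only nontrivial interaction is with the wrapping strand, which crosses strands $i$ and $i+1$ on each of its two passes. There I would use the slide move to pull the wrapping strand off the cup and off the cap of $u_i$, the curl factors acquired at the two turnbacks being mutually inverse and hence cancelling, thereby eliminating all crossings between the wrapping strand and the cup--cap; at that point the cup--cap has passed $F_n$ entirely and we read off $F_n u_i = u_i F_n$.

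The step I expect to be the main obstacle is exactly this slide computation: checking that pulling the wrapping strand across the two turnbacks of $u_i$ contributes no net scalar, and that every intermediate expression --- a genuine linear combination of non-monic diagrams together with formal crossings --- can be rewritten using only the handful of local moves above and the defining relations of $TL_n$. Once that bookkeeping is done the argument is complete, and because the whole computation takes place over $\Z[q^{\pm 1/2}]$ with $q$-symmetric conclusion, the statement for arbitrary $R$ and $\delta$ follows at once.
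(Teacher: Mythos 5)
Your outline is the standard skein-theoretic argument of Ridout and Saint-Aubin's appendix (check commutation with each generator $u_i$ by sliding the cup--cap pair through the wrapping strand using Reidemeister-type local moves, with the turnback scalars cancelling), which is exactly the proof the paper defers to by citation; the one substantive point the paper itself makes --- that the computation lives in $TL_n^{\Z[\delta]}(q+q^{-1})$ and hence descends to any pointed ring --- is correctly reproduced in your first paragraph. So this is the same approach, though to be a complete proof you would still need to verify the local moves you list from the expansion of the crossings and the relations $u_i^2=\delta u_i$, $u_iu_{i\pm1}u_i=u_i$.
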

Similarly the following holds in any ring
\begin{proposition}\cite[A.2]{ridout_saint_aubin_2014}
  The element $F_n$ acts on $S(n,m)$ by the scalar $\Delta_{m+1}$.
\end{proposition}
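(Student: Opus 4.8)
The plan is to compute the action of $F_n$ on the basis of monic $(n,m)$-diagrams spanning $S(n,m)$ and show that the result is always $\Delta_{m+1}$ times the identity. The central observation is that $F_n$ consists of a single strand that is ``woven'' over and around all $n$ strands; after resolving crossings via the given skein relations, the $2n$ crossings contribute a symmetric Laurent polynomial in $q$, and acting on a cell module we may discard any diagram of propagation number below $m$. Since $F_n$ is central (by the preceding proposition), its action on the cell module $S(n,m)$ is by a scalar as soon as $S(n,m)$ has a simple head, but more robustly: $F_n$ acts as a scalar on $M(n,m)/M(n,m)^{<m}$ because it commutes with $TL_n$ and $S(n,m)$ is generated by any single monic diagram under the $TL_n$-action. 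So it suffices to evaluate $F_n \cdot |x\rangle$ for one convenient monic diagram $x$ and read off the coefficient of $x$ modulo lower terms.

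The key steps, in order. First I would reduce to the ``identity-like'' diagram: take $x$ to be the monic $(n,m)$-diagram with simple cups at positions $1,3,5,\dots, n-m-1$ (nested appropriately) and $m$ through-strands, or even more simply observe that it is enough to compute $F_{n}$ composed with $\cap^{\otimes (n-m)/2}\otimes \id_m$ on the nose in $\TLcat$ and extract the identity component. Second, I would set up the recursion: pulling the wrapping strand of $F_n$ past one more through-strand, using the two crossing resolutions, expresses the relevant coefficient in terms of the analogous quantity for $n-1$ strands, picking up a factor that combines $q^{1/2}$ and $q^{-1/2}$ terms. This is exactly the recursion that the Chebyshev-like series $\Delta_k$ satisfies: $\Delta_0 = 2$, $\Delta_1 = \delta$, $\delta\Delta_k = \Delta_{k+1}+\Delta_{k-1}$. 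Concretely, wrapping a strand around a single through-strand with a closed loop above and below gives the ``Markov trace'' of a twist, which evaluates to $\delta = \Delta_1$ on one strand over the empty diagram, and each additional through-strand multiplies by $\delta$ while subtracting the previous term (the cup-cap resolution creates a turnback that reduces the strand count by two and flips sign after the skein substitution). Third, I would check the base case $m = n$ (no cups): here $F_n$ acting on $\id_n$ should give $\Delta_{n+1}$; and the case $m = n-2$, which can be cross-checked against the explicit Gram matrix computation in the Example (the radical spanned by $\sum [i] x_i$ is precisely the $\Delta_{n-1}$-eigenspace consideration). Finally, I would note that the computation is performed symbolically in $TL_n^{\Z[\delta]}(q+q^{-1})$, all intermediate coefficients being symmetric in $q \leftrightarrow q^{-1}$, hence everything descends to arbitrary $(R,\delta)$ by specialisation, and the formula $\Delta_{m+1}$ — already an honest polynomial in $\delta$ by \cref{sec:quantum:difference} — is the final answer.

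The main obstacle I anticipate is bookkeeping the sign and the turnback terms correctly when resolving the crossings near the top and bottom closed arcs of $F_n$: each of the two crossing types expands into a ``vertical'' (identity-like) term and a ``horizontal'' (cup-cap) term with coefficients $q^{\pm 1/2}$, and when the horizontal term meets the closed loop at the top or bottom of $F_n$ it either creates a free loop worth $\delta$ or a turnback that collapses two strands; tracking how these interact across all $2n$ crossings to yield a clean two-term recurrence requires care. A cleaner route that sidesteps much of this is to use that $F_n$, being a single strand threaded through the others, is (up to the framing/normalisation factors built into the crossing conventions) the image under the Temperley--Lieb functor of a Chebyshev polynomial evaluated at a Jones--Wenzl-free generator, so its eigenvalue on the $m$-through-strand cell module is the corresponding Chebyshev value at the scalar $\delta$ acting on an $(m+1)$-dimensional-like object, namely $\Delta_{m+1}$; but since the paper wishes to avoid appeals to $U_q(\mathfrak{sl}_2)$, I would present the direct diagrammatic recursion and simply be meticulous about the turnback terms. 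The induction on the number of through-strands, anchored at $m=0$ (where $F_n$ acting on the empty-target diagram produces a sequence of nested loops resolving to $\Delta_1 = \delta$ after the dust settles — note the edge case $\delta = 0$ is consistent with $\Delta_1 = \delta$) and $m=n$ (where $F_n \cdot \id_n = \Delta_{n+1}\id_n$ directly), closes the argument.
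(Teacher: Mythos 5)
The paper does not prove this proposition itself: it cites [A.2] of Ridout--Saint-Aubin, remarking only that the computation there is carried out in $TL_n^{\Z[\delta]}(q+q^{-1})$ and therefore descends to any pointed ring. Your outline follows the same strategy as that cited proof: resolve the $2n$ crossings of $F_n$ against a monic diagram, discard everything of propagation number below $m$, and recognise the two-term recurrence $\Delta_{m+1}=\delta\Delta_m-\Delta_{m-1}$ in the coefficient of the original diagram; your closing remark about $q\leftrightarrow q^{-1}$ symmetry and specialisation is exactly the point the paper itself emphasises.

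Two steps as written would not survive scrutiny. First, ``$F_n$ acts by a scalar because it is central and $S(n,m)$ is cyclic'' is not a valid inference: a central element can act non-scalarly on a cyclic module (take $A=k[x]/(x^2)$ acting on itself and $c=x$). The correct justification uses the two-sided structure of the cell ideal: since $F_n\cdot C^m_{x,y}=C^m_{x,y}\cdot F_n$ modulo lower cells, and the left action moves only the first index while the right action moves only the second, comparing coefficients forces the matrix of $F_n$ on $S(n,m)$ to be diagonal with constant diagonal. (Alternatively, one simply computes $F_n\cdot|x\rangle$ for every monic $x$, which the diagrammatic recursion does anyway, making the reduction unnecessary.) Second, your induction is anchored at $m=0$ and $m=n$; but $m=n$ is the generic case of the computation (the full twist on $n$ parallel through-strands), not a base case, and a two-term recurrence in the number of through-strands needs $m=0$ and $m=1$. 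Finally, the genuine content --- that sliding the wrapping strand past a cup contributes only lower-propagation terms, while passing it past a through-strand produces exactly the combination $\delta\Delta_m-\Delta_{m-1}$ --- is acknowledged but deferred; that bookkeeping is essentially the whole of the cited proof, so as it stands this is a correct plan rather than a complete argument.
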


This gives us a measure of linkage on standard modules.
\begin{theorem}\label{thm:linkage}
  Let $\delta \neq \pm 2$.
  If the module $D(n,m)$ appears as a composition factor of $S(n,m')$ then $m > m'$ and $m$ and $m'$ lie in the same orbit of $D_\infty$ on $\Z$ as described in \cref{sec:quantum:difference}.
\end{theorem}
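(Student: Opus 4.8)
The plan is to read linkage off the central element $F_n$ and to obtain the inequality $m>m'$ from the truncation results of \cref{sec:truncation}. First note that, since $D(n,m)$ occurs as a composition factor of $S(n,m')$, it is in particular a nonzero module, so $m\le n$ with $m\equiv_2 n$, and the form on $S(n,m)$ is nonzero, i.e.\ $m\in\Lambda_0$; also $m'\equiv_2 n$, hence $m\equiv_2 m'$, so that $m+1\equiv_2 m'+1$.

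Next I would compute the scalar by which $F_n$ acts on $D(n,m)$ in two ways. On one hand $F_n$ is central and acts on $S(n,m')$ as $\Delta_{m'+1}\cdot\id$ (by the two propositions recalled above); a central element acting as a scalar on a module acts as the same scalar on every one of its composition factors, so $F_n$ acts on $D(n,m)$ as $\Delta_{m'+1}\cdot\id$. On the other hand $D(n,m)$ is the cosocle of $S(n,m)$ by \cref{thm:cell_reps}, hence a quotient of $S(n,m)$, and $F_n$ acts on $S(n,m)$ — and therefore on this quotient — as $\Delta_{m+1}\cdot\id$. Since $D(n,m)\neq 0$, the two scalars must agree: $\Delta_{m+1}=\Delta_{m'+1}$ in $R$.

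Now I would feed this into the difference-series lemma. As $\delta\neq\pm 2$ and $m+1\equiv_2 m'+1$, the lemma of \cref{sec:quantum:difference} characterising equality of $\Delta$-values forces $m+1\equiv_{2\ell}m'+1$ or $m+1\equiv_{2\ell}-(m'+1)$. This is exactly the condition that $m$ and $m'$ lie in a single orbit of the infinite dihedral group acting on $\Z$ by reflections about the points one less than a multiple of $\ell$, as described there, which settles the linkage half of the statement.

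For $m>m'$ I would invoke \cref{cor:simple_mult_triv}: $[S(n,m'):D(n,m)]=[S(m,m'):D(m,m)]$, and the left-hand side is nonzero by hypothesis, so $S(m,m')\neq 0$; since $S(a,b)=0$ whenever $b>a$, this forces $m'\le m$. When $m=m'$ the factor in question is just the cosocle $D(n,m')$, which occurs with multiplicity one, and every further composition factor is strictly higher; in the degenerate case $\delta=0$, $m'=0$, $n$ even (where $0\notin\Lambda_0$, so $S(n,0)$ is all radical) every composition factor already has $m>0=m'$. I expect the only delicate points to be these degenerate cases and the bookkeeping ensuring $D(n,m)\neq 0$ (so that the two scalar actions of $F_n$ may legitimately be equated); the substantive inputs — centrality of $F_n$, its eigenvalue $\Delta_{m+1}$ on $S(n,m)$, and \cref{cor:simple_mult_triv} — are all already in hand, so no genuinely new obstacle arises.
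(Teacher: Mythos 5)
Your proof is correct and follows the same route the paper intends: the theorem is stated immediately after the two propositions on $F_n$ precisely so that it follows from comparing the scalar $\Delta_{m'+1}$ (by which the central element acts on $S(n,m')$ and hence on every composition factor) with $\Delta_{m+1}$ (by which it acts on the quotient $D(n,m)$ of $S(n,m)$), and then invoking the difference-series lemma, with the ordering coming from cellularity/truncation. Your observation that the inequality can only be $m\ge m'$, with equality exactly for the cosocle, is a fair and correct caveat about the statement as written.
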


Note that $\TL_{n-1}$ injects into $\TL_n$ by the addition of a single through strand at the bottom.
We will denote the restriction of a $\TL_n$-module $M$ to a $\TL_{n-1}$ along this injection by $M\restr$.

\begin{corollary}\cite[4.2]{ridout_saint_aubin_2014}
  If $m \not\equiv_\ell -1$, then $S(n,m)\restr \cong  S(n-1, m-1) \oplus S(n-1, m+1)$.
\end{corollary}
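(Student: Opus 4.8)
The plan is to realise $S(n,m)\downarrow$ (restriction to $TL_{n-1}$) as a two-step filtration with subquotients $S(n-1,m-1)$ and $S(n-1,m+1)$, and then to split the resulting extension using the linkage theorem. Throughout, view $TL_{n-1}\subseteq TL_n$ as the subalgebra generated by $u_1,\dots,u_{n-2}$, which diagrammatically freezes the bottom source (and target) site $n$ as a through strand. Recall $S(n,m)$ has basis the monic $(n,m)$-diagrams, and partition this basis according to whether source site $n$ lies on a through strand or is an endpoint of a cup. Since none of $u_1,\dots,u_{n-2}$ meets site $n$, and a through strand at site $n$ necessarily runs to target site $m$, attaching any generator leaves such a strand intact; hence the span $N$ of the diagrams whose site $n$ is a through strand is a $TL_{n-1}$-submodule, and deleting that strand gives an isomorphism $N\cong S(n-1,m-1)$. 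The remaining diagrams — those in which site $n$ caps off to some site $j<n$ — have, by planarity, sites $j+1,\dots,n-1$ matched in pairs among themselves, so $j$ is the largest-indexed source site that is a through strand or caps upward; deleting site $n$ and its cup frees site $j$ into a through strand, and this is a bijection between these diagrams and the monic $(n-1,m+1)$-diagrams (those with $j$ the largest-indexed through strand), hence with the basis of $S(n-1,m+1)$.

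It remains to check that the bijection just described intertwines the $TL_{n-1}$-actions modulo $N$. This is a finite case analysis on how a generator $u_i$ ($1\le i\le n-2$) interacts with sites $i,i+1$: if both are through strands the result vanishes on both sides; if one is a through strand and the other a cup, a defect is relocated, and one checks the new largest-indexed through strand moves the same way on the two sides; if both lie in cups one has an ordinary cup-slide. Comparing the outputs, and discarding the ``through strand at $n$'' components (which lie in $N$), yields the short exact sequence of $TL_{n-1}$-modules
\[
 0\longrightarrow S(n-1,m-1)\longrightarrow S(n,m)\downarrow\longrightarrow S(n-1,m+1)\longrightarrow 0 .
\]

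To split this, assume first $\delta\neq\pm2$. By \cref{thm:linkage}, every composition factor $D(n-1,a)$ of $S(n-1,m-1)$, resp.\ of $S(n-1,m+1)$, has $a$ in the $D_\infty$-orbit of $m-1$, resp.\ of $m+1$, from \cref{sec:quantum:difference}; so all composition factors of these two cell modules lie in the blocks of $TL_{n-1}$ sitting over those two orbits. Now $m+1$ is the reflection of $m-1$ about $m$, the reflection centres of the $D_\infty$-action are exactly the integers $\equiv-1\pmod\ell$, and $\ell\ge2$, so the hypothesis $m\not\equiv_\ell-1$ forces $m-1$ and $m+1$ into distinct orbits. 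Hence no block of $TL_{n-1}$ meets both cell modules; letting $e$ be the sum of the block idempotents of the blocks meeting $S(n-1,m+1)$, $e$ acts as the identity on $S(n-1,m+1)$ and as $0$ on $S(n-1,m-1)$, so multiplication by $e$ splits the sequence. When $\delta=\pm2$ one is either in a semisimple situation (characteristic $0$, where the statement is automatic) or reduces to the analogue at $q=\pm1$. The genuine obstacle is the equivariance verification of the second paragraph — the defect-relocation case in particular must be matched carefully on both sides; granting \cref{thm:linkage}, the splitting itself is essentially formal, the only caveat being the excluded parameters $\delta=\pm2$, where $F_n$ no longer separates orbits.
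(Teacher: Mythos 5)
Your proof is correct and follows the same route the paper intends (and that the cited source uses): realise $S(n,m)\downarrow$ as an extension of $S(n-1,m+1)$ by $S(n-1,m-1)$ via the position of the bottom source site, then split it because $m-1$ and $m+1$ lie in different linkage classes when $m\not\equiv_\ell-1$ — your block-idempotent splitting is just the generalized-eigenspace decomposition for the central element $F_{n-1}$, whose eigenvalues $\Delta_m$ and $\Delta_{m+2}$ differ exactly under that hypothesis. The caveat you flag at $\delta=\pm2$ is real but is already present in the paper's \cref{thm:linkage}, so your treatment matches the text.
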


This gives us partial results on the dimensions of simple modules.
Indeed, if the restriction splits, then the Gram matrix can be put in block diagonal form.
However, when $m\equiv_\ell -2$, one of the blocks vanishes completely~\cite[\S5]{westbury_1995}.
\begin{proposition}\label{prop:incremental_result_1}
  If $m\not\equiv_\ell -1$ then 
  \begin{equation}
    \dim D(n,m) = \begin{cases}
      \dim D(n-1, m-1) & m\equiv_\ell-2\\
      \dim D(n-1, m-1) + \dim D(n-1,m+1) & \text{else}
    \end{cases}
  \end{equation}
  If, further, $R$ is a characteristic zero field, and $m \equiv_\ell -1$, $\dim D(n,m) = \dim S(n,m)$.
\end{proposition}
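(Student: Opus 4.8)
The plan is the following. The final assertion --- for $R$ of characteristic zero with $m\equiv_\ell-1$ --- is immediate from the characteristic-zero nondegeneracy theorem established just above: there the standard form on $S(n,m)$ is nondegenerate, so $R(n,m)=0$, $D(n,m)=S(n,m)$, and in particular $\dim D(n,m)=\dim S(n,m)$. So the real content is the displayed dichotomy, and I assume $\delta\neq\pm2$ throughout, so that \cref{thm:linkage} and the restriction isomorphism $S(n,m)\!\downarrow_{TL_{n-1}}\cong S(n-1,m-1)\oplus S(n-1,m+1)$ (for $m\not\equiv_\ell-1$) are available.

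Fix $m\not\equiv_\ell-1$. The two tools are (i) $\dim D(n,m)=\rk G(n,m)$, and (ii) the restriction isomorphism above. The bridge between them is the observation that the radical of a bilinear form on a finite-dimensional space is intrinsic to the form and is insensitive to which algebra acts. Consequently, restricting the standard form on $S(n,m)$ along $TL_{n-1}\hookrightarrow TL_n$ leaves its radical --- which is $R(n,m)$ --- unchanged, so $\dim D(n,m)=\dim S(n,m)-\dim R(n,m)$ may be computed inside $S(n,m)\!\downarrow_{TL_{n-1}}$. First I would show the restricted form is block-diagonal for the decomposition $S(n-1,m-1)\oplus S(n-1,m+1)$: an off-diagonal block is a $TL_{n-1}$-invariant pairing $S(n-1,m-1)\times S(n-1,m+1)\to R$, equivalently a $TL_{n-1}$-homomorphism $S(n-1,m-1)\to S(n-1,m+1)^{\vee}$. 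A nonzero such map has image a nonzero quotient of $S(n-1,m-1)$, hence with $D(n-1,m-1)$ among its composition factors; but this image also embeds in $S(n-1,m+1)^{\vee}$, whose composition factors coincide with those of $S(n-1,m+1)$ and, by \cref{thm:linkage}, are all of the shape $D(n-1,k)$ with $k\ge m+1$. Since $D(n-1,m-1)$ is not among these, the map vanishes and the form is block-diagonal; thus $\dim R(n,m)$ is the sum of the dimensions of the radicals of the two diagonal blocks.

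It remains to identify those blocks, which is the crux. Here I would argue --- by tracking how $\langle x\mid y\rangle$ decomposes along the restriction isomorphism, or by appeal to \cite[\S5]{westbury_1995} --- that the block on $S(n-1,m-1)$ is a unit multiple of its standard Gram matrix $G(n-1,m-1)$, so has rank $\dim D(n-1,m-1)$, whereas the block on $S(n-1,m+1)$ is a unit multiple of $G(n-1,m+1)$ when $m+1\not\equiv_\ell-1$ and vanishes identically precisely when $m+1\equiv_\ell-1$, i.e.\ when $m\equiv_\ell-2$. Adding ranks then gives $\rk G(n,m)=\rk G(n-1,m-1)+\rk G(n-1,m+1)$ in general and $\rk G(n,m)=\rk G(n-1,m-1)$ when $m\equiv_\ell-2$, which is exactly the claimed formula after rewriting $\rk G=\dim D$.

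The hard part is precisely this block identification. The subtlety is that \emph{a priori} a $TL_{n-1}$-invariant form on a cell module need not be a scalar multiple of its standard form (the endomorphism algebra of a cell module can carry nilpotents), so one cannot replace the explicit diagrammatic computation of the two blocks by a soft invariance argument; and the vanishing of the $S(n-1,m+1)$-block exactly when $m\equiv_\ell-2$ is likewise not formal --- that is the step where the hypothesis $m\equiv_\ell-2$ genuinely enters.
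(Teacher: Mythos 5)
Your proposal follows essentially the same route as the paper, which offers no formal proof but derives the proposition from the splitting $S(n,m)\downarrow\cong S(n-1,m-1)\oplus S(n-1,m+1)$, the resulting block-diagonalisation of the Gram matrix, and the citation to \cite[\S5]{westbury_1995} for the vanishing of the $S(n-1,m+1)$-block when $m\equiv_\ell-2$; the characteristic-zero clause likewise follows from the nondegeneracy theorem proved just above. Your additional linkage argument for why the off-diagonal blocks vanish, and your honest flagging of the block identification as the non-formal step, only make the sketch more complete.
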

The sequel of this paper is concerned predominantly with filling in the final case of a field $R$ of characteristic $p>0$ and $m\equiv_\ell -1$.


\section{Morphisms between Standard Modules}\label{sec:morphisms}
The morphisms between standard modules are particularly nicely behaved.

\subsection{Uniqueness}\label{sec:morphism_unique}
In this section we prove that the space of morphisms $\Hom_{\TL_n}(S(n,r), S(n,s))$ is at most one dimensional.

\begin{lemma}\label{lem:morphism_space}
  If $r > 0$ or $\delta \neq 0$, all morphisms $\theta \in \Hom_{\TL_n}(S(n,r), S(n,s))$ are of the form
  \begin{equation}
    u \mapsto u v
  \end{equation}
  for some $v : \underline{r} \to \underline{s}$ a linear combination of monic diagrams.
  Further, every such $\theta$ has cyclic image.
\end{lemma}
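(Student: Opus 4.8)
The plan is to exploit the description of $S(n,r)$ as a cyclic module generated by a distinguished diagram together with the cellular-algebra machinery of \cref{sec:cellular}. First I would fix the diagram $d_r \colon \underline{n} \to \underline{r}$ that is the ``identity-like'' monic diagram (the one with propagation number $r$ in which the first $r$ source sites run straight across and the remaining $n-r$ are capped off in nested pairs at the bottom); this is a generator of $S(n,r)$ as a $TL_n$-module, since every monic $(n,r)$-diagram $x$ can be written as $a\cdot d_r$ for a suitable $a \in TL_n$ (indeed one may take $a = |x\rangle\langle d_r|$-type elements, using that $d_r$ has a one-sided inverse up to lower terms). Given any $\theta \in \Hom_{TL_n}(S(n,r), S(n,s))$, set $w = \theta(d_r) \in S(n,s)$, represented by a linear combination of monic $(n,s)$-diagrams. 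The claim will be that $\theta(u) = u v$ for a linear combination $v$ of monic $(r,s)$-diagrams, where $v$ is obtained from $w$ by ``stripping off'' the $d_r$ prefix.

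The key step is to show that $w = \theta(d_r)$ lies in the image of the map $S(r,s) \to S(n,s)$, $v \mapsto d_r \cdot v$ — equivalently, that $w$ factors through the first $r$ strands. For this I would use that $d_r$ is annihilated by the cups $u_i$ for $i > r$ (more precisely, that certain $u_i$ act on $d_r$ either by $\delta d_r$ or produce lower-propagation diagrams killed in $S(n,r)$), and that $\theta$ is a $TL_n$-morphism, so $w$ inherits the same annihilation/eigenvector properties. Concretely, if $u_i \cdot d_r = \delta d_r$ in $S(n,r)$ for the relevant $i$, then $u_i \cdot w = \delta w$ in $S(n,s)$; analysing which elements of $S(n,s)$ satisfy all these relations simultaneously forces $w$ to be supported only on monic diagrams whose bottom $n-r$ source sites are capped among themselves exactly as in $d_r$, i.e.\ $w = d_r \cdot v$ for a unique $v$ that is a linear combination of monic $(r,s)$-diagrams. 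Then for arbitrary $x = a \cdot d_r \in S(n,r)$ we get $\theta(x) = a\cdot\theta(d_r) = a\cdot d_r \cdot v = x\cdot v$, which is the desired formula; the diagram $v$ is a combination of monic diagrams because $\theta$ lands in $S(n,s)$ where non-monic composites are killed, but one must check the combinatorial point that $d_r \cdot v$ being monic forces $v$ monic.

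Finally, cyclicity of the image: $\im\theta$ is spanned by $\{\theta(x) : x \in S(n,r)\} = \{x v : x \text{ monic }(n,r)\text{-diagram}\}$, and since every such $x$ is $a\cdot d_r$, we have $xv = a\cdot(d_r v) = a\cdot\theta(d_r)$, so $\im\theta = TL_n\cdot\theta(d_r)$ is generated by the single element $\theta(d_r)$, hence cyclic.

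The main obstacle I expect is the middle step — rigorously pinning down that $\theta(d_r)$ must factor through $\underline{r}$ on the first $r$ strands. The annihilation relations coming from the $u_i$ with $i>r$ give necessary conditions, but one has to verify they are also sufficient, i.e.\ that the subspace of $S(n,s)$ cut out by ``$u_i$ acts by $\delta$ for $i$ in the relevant range and the nested-cap relations hold'' is exactly $d_r\cdot S(r,s)$, with no extra elements sneaking in from lower-propagation cancellations. This is essentially a careful diagram-chase / linear-independence argument about supports of monic diagrams, and handling it cleanly — perhaps via the truncation functor $\mathcal{T}_n^r$ of \cref{sec:truncation}, which sends $S(n,s)$ to $S(r,s)$ and should identify $d_r\cdot S(r,s)$ with $e_n^r S(n,s)$ — is where the real work lies.
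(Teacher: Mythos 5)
The overall shape of your argument (evaluate $\theta$ on a generator of $S(n,r)$, peel off a morphism $v:\underline{r}\to\underline{s}$, then deduce cyclicity of the image) is the right one, but the step you yourself flag as ``where the real work lies'' is a genuine gap, and the mechanism you sketch for it does not work in general. First, the claim that $w=\theta(d_r)$ is forced into $d_r\cdot S(r,s)$ by the relations $u_i\cdot w=\delta w$ fails when $\delta=0$: the condition then reads $u_i\cdot w=0$, whose solution space in $S(n,s)$ is far larger than the span of diagrams carrying a cap at position $i$. (For invertible $\delta$ one could argue $w=\delta^{-1}u_i w\in\im u_i$ and conclude $w$ has a cap at $i$, but the lemma must hold over an arbitrary pointed field.) Second, with your nested choice of $d_r$, most $u_i$ in the capped region do not act on $d_r$ by $\delta$ at all --- they produce different diagrams --- so you do not even obtain enough eigenvector relations to begin the analysis. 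Third, the same degeneracy undermines your claim that every monic $x$ equals $a\cdot d_r$: taking $a=|x\rangle\langle d_r|$ gives $a\cdot d_r=\delta^{(n-r)/2}|x\rangle$ because the nested caps close into loops, and this vanishes at $\delta=0$.

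The paper sidesteps all of this with one observation: for $r>0$ one can choose two monic diagrams $y,z:\underline{n}\to\underline{r}$ with \emph{offset} caps so that $\langle y|z\rangle=\id_r$ exactly, with no power of $\delta$ (this is the content of \cref{lem:form_nondegenerate}). Then $u=|u\rangle\langle y|\cdot z$ in $S(n,r)$ for every monic $u$, hence $\theta(u)=|u\rangle\langle y|\cdot\theta(z)=u\cdot\bigl(\langle y|\theta(z)\bigr)$, and $\langle y|\theta(z)$ is \emph{by construction} a morphism $\underline{r}\to\underline{s}$ --- there is nothing to prove about $\theta(z)$ factoring through the first $r$ strands, and one only has to discard the non-monic diagrams in its support, which die in $S(n,s)$. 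The case $r=0$ is handled separately. To salvage your route, replace $d_r$ by $z$ and $|x\rangle\langle d_r|$ by $|x\rangle\langle y|$; the factorization step then becomes the tautology above rather than a delicate simultaneous-eigenspace computation.
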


\begin{proof}
  If $r = 0$, then the only value of $s$ for which there are homomorphisms is also $0$ and we know that $S(n,r)$ is Schurian so picking $v=\id_r$ suffices.
  If $r \neq 0$, then there are $y,z : \underline{n} \to \underline{r}$ such that $\langle y | z \rangle = \id_r$.

  Then, for any monic $u : n \to r$,
  \begin{equation}\label{eq:monic_argument}
    \theta(u) = \theta(|u\rangle\langle y|z\rangle) = |u\rangle\langle y|\theta(z) = |u\rangle\Big(\langle y|\theta(z)\Big).
  \end{equation}
  Thus it is clear from the second-to-last equality that $\theta(z)$ generates $\im\theta$ as a $\TL_n$-module.

  Let $v' = \langle y | \theta(z)$.
  Suppose a $\underline{n} \to \underline{s}$ diagram in the support of $\theta(x)$ is not monic.
  Then it is the composition of a diagram in the support of $x$ and one in the support of $v'$.
  Since the diagrams in the support of $u$ are all monic, we must have that the latter is not monic.
  But then it can be removed from $v'$ and the composition will be unchanged modulo diagrams factoring through numbers less than $s$.
  Hence $v$ can be chosen by removing all diagrams from the support of $v'$ which are not monic and in $S(n,s)$ we will still have $\theta(x) = x v$.
\end{proof}

\begin{remark}\label{rem:killed_morphisms}
  Recall the morphisms $y$ and $z$ from \cref{lem:form_nondegenerate}.
Any $u_i$ for $1\le i < r$ acts as zero on $z$ in $S(n,r)$.
Hence $u_i\theta(z) = \theta(u_i z) = 0$.
Since $\theta(z) = z v$ for some $v$, we see that indeed $u_i v = 0$ for all $1 \le i < r$.
This is to say that $v$ must span a trivial submodule of $S(r, s)$.
\end{remark}

To show that each $S(n, m)$ has at most one trivial submodule, we must introduce a partial order on diagrams.
The bijection between two-part standard tableaux and standard diagrams in \cref{lem:diagram-tableaux} allows us to label diagrams by tableau.
Let $\Omega(n, m)$ be the set of standard tableaux of shape $\left((n + m)/2, (n - m)/2\right)$ and so be in bijection with monic $\underline{n} \to \underline{m}$ diagrams.
For any two part tableau $t$, let $t^{(1)}$ be the tuple of the first row of $t$ and $t^{(2)}$ that of the second.
Define $\le$ on $\Omega(n,m)$ by considering the partial order generated by $t < s$ if $t$ and $s$ differ in exactly two places and $t^{(2)}$ is lexicographically less than $s^{(2)}$.

We demonstrate the partial order on $\Omega(6,2)$ with dark arrows below:

\begin{center}
  \begin{tikzpicture}
    \node (24) at (0,0) {\young(1356,24)};
    \node (25) at (-2,2) {\young(1346,25)};
    \node (26) at (-2,4) {\young(1345,26)};
    \node (34) at (2,2) {\young(1256,34)};
    \node (35) at (2,4) {\young(1246,35)};
    \node (36) at (-2,6) {\young(1245,36)};
    \node (45) at (2,6) {\young(1236,45)};
    \node (46) at (0,8) {\young(1235,46)};
    \node (56) at (0,10) {\young(1234,56)};

    \draw[very thick, ->] (46) -- (56);
    \draw[very thick, ->] (36) -- (46);
    \draw[very thick, ->] (45) -- (46);
    \draw[very thick, ->] (26) -- (36);
    \draw[very thick, ->] (35) -- (36);
    \draw[very thick, ->] (35) -- (45);
    \draw[very thick, ->] (25) -- (26);
    \draw[very thick, ->] (25) -- (35);
    \draw[very thick, ->] (34) -- (35);
    \draw[very thick, ->] (24) -- (25);
    \draw[very thick, ->] (24) -- (34);
  
    \draw[thin, blue, ->] (56) edge[out=-70, in=70] (46);
    \draw[thin, blue, ->] (45) edge[out=110, in=-30] (46);
    \draw[thin, blue, ->] (35) edge[out=130, in=-10] (36);
    \draw[thin, blue, ->] (25) edge[out=70, in=-70] (26);
    \draw[thin, blue, ->] (26) edge[out=170, in=-170] (26);
    \draw[thin, blue, ->] (36) edge[out=170, in=-170] (36);
    \draw[thin, blue, ->] (46) edge[out=170, in=-170] (46);

    \draw[thin, green, ->] (56) edge[out=170, in=-170] (56);
    \draw[thin, green, ->] (25) edge[out=170, in=-170] (25);
    \draw[thin, green, ->] (35) edge[out=170, in=-170] (35);
    \draw[thin, green, ->] (46) edge[out=110, in=-110] (56);
    \draw[thin, green, ->] (34) edge[out=110, in=-110] (35);
    \draw[thin, green, ->] (24) edge[out=110, in=-30] (25);

    \draw[thin, orange, ->] (46) edge[out=160, in=-160] (46);
    \draw[thin, orange, ->] (36) edge[out=70, in=-140] (46);
    \draw[thin, orange, ->] (24) edge[out=170, in=-170] (24);
    \draw[thin, orange, ->] (34) edge[out=-160, in=70] (24);
    \draw[thin, orange, ->] (35) edge[out=70, in=-70] (45);
    \draw[thin, orange, ->] (45) edge[out=10, in=-10] (45);

    \draw[thin, red, ->] (24) edge[out=30, in=-100] (34);
    \draw[thin, red, ->] (34) edge[out=10, in=-10] (34);
    \draw[thin, red, ->] (25) edge[out=20, in=-140] (35);
    \draw[thin, red, ->] (35) edge[out=10, in=-10] (35);
    \draw[thin, red, ->] (26) edge[out=70, in=-70] (36);
    \draw[thin, red, ->] (36) edge[out=160, in=-160] (36);

    \draw[thin, purple, ->] (24) edge[out=160, in=-160] (24);
    \draw[thin, purple, ->] (25) edge[out=160, in=-160] (25);
    \draw[thin, purple, ->] (26) edge[out=160, in=-160] (26);
    \draw[thin, purple, ->] (36) edge[out=-110, in=110] (26);
    \draw[thin, purple, ->] (34) edge[out=-110, in=40] (24);
    \draw[thin, purple, ->] (35) edge[out=-130, in=10] (25);
    \draw[thin, purple, ->] (45) edge[out=-130, in=90] (24);

  \end{tikzpicture}
\end{center}

\begin{lemma}
  The above order is given by $t \le s$ iff $t^{(2)}_i \le s^{(2)}_i$, for each $i$.
\end{lemma}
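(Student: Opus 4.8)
The plan is to prove the two inclusions between the generated order and the componentwise order on second rows. I will write $t_i$ for the $i$-th entry of the second row of a tableau $t\in\Omega(n,m)$, so that $t^{(2)}=(t_1<\cdots<t_r)$ with $r=(n-m)/2$, and I will use freely that a tableau in $\Omega(n,m)$ is determined by the set $\{t_1,\dots,t_r\}$ of its second-row labels (the first row being the complement). I will say $t$ and $s$ are related by a \emph{basic move}, written $t\to s$, if they agree off exactly two cells and $t^{(2)}<_{\mathrm{lex}}s^{(2)}$; the order in the statement is the reflexive--transitive closure of $\to$, and this is automatically a partial order since a basic move strictly raises the lexicographic value of the second row, precluding cycles.

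For the inclusion ``generated order $\subseteq$ componentwise order'' it suffices, by transitivity, to show a single basic move $t\to s$ already gives $t_i\le s_i$ for every $i$. Here I would argue: $t$ and $s$ carry the same entries and differ in exactly two cells, so $s$ is $t$ with two entries $a<b$ swapped; a transposition of two entries inside one row breaks strict increase along that row, so $a$ and $b$ lie in different rows of $t$. If $b$ were the one of $\{a,b\}$ lying in row $2$ of $t$, then passing to $s$ would replace $b$ by the smaller value $a$ in the second row, making $s^{(2)}$ lexicographically no larger than $t^{(2)}$, which is impossible for a basic move. Hence $a$ lies in row $2$ of $t$ and $b$ in row $1$, and $s^{(2)}$ is obtained from $t^{(2)}$ by deleting $a$ and inserting the larger value $b$; since raising one element of a finite set weakly increases all of its order statistics, $t_i\le s_i$ for all $i$.

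For the converse I would show: whenever $t^{(2)}\le s^{(2)}$ componentwise and $t\neq s$, there is a basic move $t\to t'$ with $(t')^{(2)}\le s^{(2)}$ still componentwise. Granting this, such a move raises exactly one coordinate of $t^{(2)}$ by $1$, so $\sum_i(s_i-t_i)$ is a nonnegative integer dropping by $1$ at each step, and iterating produces a chain of basic moves from $t$ to $s$, witnessing $t\le s$ in the generated order. To build the move, I would take $i$ to be the \emph{largest} index with $t_i<s_i$, so that $t_j=s_j$ for all $j>i$, and check that $t_i+1$ is a first-row label of $t$: if $t_i+1$ were instead a second-row label it would have to be $t_{i+1}$ (the least second-row label exceeding $t_i$), which for $i<r$ forces $s_{i+1}=t_{i+1}=t_i+1\le s_i$, contradicting $s_i<s_{i+1}$, while for $i=r$ the value $t_i+1$ is larger than every second-row label. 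Swapping $t_i$ (at cell $(2,i)$) with $t_i+1$ wherever it sits in row $1$ produces a filling $t'$; these two entries are consecutive and cannot share a column (else $t_i+1$ would sit directly above $t_i$, violating column-strictness of $t$), and a short check confirms $t'$ is again standard, with second row equal to $t^{(2)}$ with its $i$-th entry raised to $t_i+1$. Thus $t\to t'$ is a basic move, and $(t')^{(2)}\le s^{(2)}$ componentwise because $t_i+1\le s_i$.

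The step I expect to be the main obstacle is precisely the existence of the index $i$ in the converse: one has to exclude the degenerate case in which, at every coordinate where $t$ lags behind $s$, the next coordinate of $t$ is exactly one larger, so that no legal single swap presents itself. Choosing $i$ maximal and playing the potential block $t_{i+1}=t_i+1$ off against the strict monotonicity $s_i<s_{i+1}$ of the target's second row is what resolves this; the remaining ingredients (standardness of the swapped filling, and the monotonicity of order statistics under raising one element of a set) are routine and I would not dwell on them.
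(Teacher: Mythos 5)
Your proof is correct. The paper states this lemma without proof, so there is nothing to compare against; your argument is the natural one and is complete. Both directions check out: for the forward inclusion, your observation that a basic move must take an entry $a$ out of the second row and replace it by a larger entry $b$ from the first row (since swapping within a row, or moving a larger second-row entry down to a smaller one, is impossible), combined with the monotonicity of order statistics, is exactly right. For the converse, the only delicate point is indeed the one you flag --- producing a legal single swap --- and your choice of the \emph{largest} index $i$ with $t_i<s_i$, played off against the strict increase of $s^{(2)}$, correctly rules out the obstruction $t_{i+1}=t_i+1$; the standardness of the swapped filling follows since $t_i$ and $t_i+1$ are consecutive and cannot share a column. (One can even check that every generating relation in the paper's definition is forced to be of your restricted form, raising a single second-row entry by exactly $1$, so your two directions are genuinely inverse to one another.)
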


For any $t \in \Omega(n,m)$, let $\underline{t}$ be the diagram in $S(n, m)$.
The following is a critical lemma for many induction proofs.

\begin{lemma}\label{lem:dot_induction}
  For any $1 \le i < n$ and $t \in \Omega(n,m)$, $u_i \cdot \underline{t}$ is either $\delta \underline t$ or a diagram morphism $\underline{s}$.
  In the latter case $t\neq s$ and there is no $r \in \Omega(n,m)$ such that $t < r < s$.
  Further if $u_i \cdot \underline{t} = \underline{s} > t$ then there is no other $r < s$ such that $u_i \cdot \underline{r} = \underline{s}$.
\end{lemma}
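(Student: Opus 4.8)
The plan is to analyze the action of $u_i$ on the diagram $\underline{t}$ directly in terms of the two-part tableau $t$, using the bijection of \cref{lem:diagram-tableaux} and the combinatorial description of $\le$ on $\Omega(n,m)$. Recall that $t^{(2)}$ records the ``closing sites'' of the diagram: the source sites $j$ that are joined to a source site strictly above. The first step is to classify the geometry near sites $i,i+1$. There are exactly two cases: either $i$ and $i+1$ are joined to one another by a simple cup in $\underline t$, or they are not. In the first case, stacking $u_i$ creates a closed loop, so $u_i\cdot\underline{t} = \delta\,\underline{t}$, which is the first alternative.

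In the remaining case, neither a closed loop nor a non-monic diagram arises (a non-monic outcome would require $i$ and $i+1$ both to be endpoints of through-strands facing the same way after capping, which planarity forbids once they are not already cupped together), so $u_i\cdot\underline{t} = \underline{s}$ for a genuine monic diagram $s$. I would then read off how $s$ differs from $t$: attaching the cap-cup at $(i,i+1)$ reconnects exactly the two strands meeting those sites, which changes the set of closing sites in precisely one position — one label in $t^{(2)}$ is replaced by another. Concretely, exactly one of $i$, $i+1$ is a closing site in $t$ and the other is not (if neither were closing, the two strands at $i,i+1$ would both head upward-left and the new cup would not be planar; if both were, they would meet strands above and the result is symmetric), and after applying $u_i$ the roles along that local configuration shift by one, so $s^{(2)}$ agrees with $t^{(2)}$ except that a single entry moves. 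Since moving a single entry of $t^{(2)}$ up or down by the minimal planar amount is, by the description preceding \cref{lem:dot_induction} and the covering-relation lemma, exactly an edge in the Hasse diagram of $(\Omega(n,m),\le)$, we get $t\neq s$ with no $r$ strictly between them — this is the covering claim.

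For the final assertion, suppose $u_i\cdot\underline{t} = \underline{s} > t$, so $s^{(2)}$ is obtained from $t^{(2)}$ by \emph{increasing} exactly one entry, say in position $k$, from value $a$ to value $b$. I would argue that $i,i+1$ are forced: applying $u_i$ to \emph{any} diagram $\underline r$ with $u_i\cdot\underline r = \underline s$ and $r<s$ must, by the same local analysis run in reverse, reconnect strands at $i,i+1$ in the unique way that drops one closing-site label; so $r^{(2)}$ and $s^{(2)}$ differ in a single position, and since $r<s$ that position is forced to be the unique position $k$ where $s^{(2)}$ exceeds $t^{(2)}$, with $r^{(2)}_k$ forced to be the unique valid smaller value compatible with planarity and with the covering relation — namely $a$. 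Hence $r = t$. The one subtlety, and the step I expect to be the main obstacle, is making the ``local analysis run in reverse'' genuinely rigorous: I need that the pair $(i,i+1)$ is recoverable from $\underline s$ together with the knowledge that a single $u_i$ produced it from something below, i.e. that the strand of $\underline s$ sitting at sites $i,i+1$ after a cup-insertion is distinguishable. This amounts to checking that in $\underline s$, sites $i$ and $i+1$ are connected by a simple cup on the source side — which holds precisely because $u_i\underline r$ always has a simple cup at $(i,i+1)$ — and then that the pre-image under removing that cup is unique. I would handle this by a short case check on whether the two strands of $\underline r$ at $i,i+1$ each go up-left, each go right, or split, eliminating all but the one planar possibility, and invoking monotonicity of $r^{(2)}$ under $\le$ to pin down the remaining numerical choice.
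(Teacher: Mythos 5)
Your overall strategy --- classify the local configuration of $\underline t$ at sites $i$ and $i+1$ and translate the effect of $u_i$ into a move on the second row $t^{(2)}$ --- is the same as the paper's, but the classification you give is wrong in two places. First, planarity does \emph{not} rule out the non-monic outcome: whenever both $i$ and $i+1$ carry through-strands in $\underline t$ (a perfectly planar configuration, e.g.\ sites $1,2$ of the basis diagram $x_3\in S(n,n-2)$, whose only cup is at $(3,4)$), composing with $u_i$ joins two target sites, and the result is non-monic, hence $0$ in $S(n,m)$. The correct disposal of this case is that $u_i\cdot\underline t=0$ (so it simply contributes nothing later), not that the configuration is impossible; your parenthetical planarity argument proves a false statement. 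Second, your claim that ``exactly one of $i$, $i+1$ is a closing site'' is false: both may be closing sites, and neither may be, and in each of these situations the output is a genuine monic diagram obtained by moving one entry of $t^{(2)}$ \emph{downwards}. The correct four-way split is: $i$ cupped to $i+1$ (gives $\delta\underline t$); both $i,i+1\in t^{(2)}$ or both in $t^{(1)}$ with at least one not a through-strand (gives a strictly smaller $s$); both through-strands (gives $0$); and $i\in t^{(2)}$, $i+1\in t^{(1)}$ (gives a strictly larger $s$).

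This feeds into the second problem: the move is \emph{not} always ``by the minimal planar amount''. In the decreasing cases the displaced entry of $t^{(2)}$ can fall past several elements of $\Omega(n,m)$ --- in the paper's own picture of $\Omega(6,2)$ there is a $u_1$-arrow from the tableau with second row $(4,5)$ all the way down to $(2,4)$, three covering steps below --- so your assertion that $t$ and $s$ are always Hasse-adjacent is false. The lemma survives only because the clause ``no $r$ with $t<r<s$'' is vacuous when $s<t$; the covering property is needed, and holds, only in the increasing case, where the move is precisely ``replace the entry $i$ of $t^{(2)}$ by $i+1$''. That explicit description is also what rescues your uniqueness step, which as written is circular (you identify the changed position of $r^{(2)}$ by comparing with $t^{(2)}$, i.e.\ by assuming what you want to prove): since every $r<s$ with $u_i\cdot\underline r=\underline s$ must be in the increasing configuration, $r^{(2)}$ is forced to equal $s^{(2)}$ with the entry $i+1$ replaced by $i$, which is determined by $s$ and $i$ alone.
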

In the above diagram, we have indicated the action of the $u_i$ by different light coloured arrows.
The crux of this lemma is that such arrows never ``go up by more than one'' and if they do go upwards, they are the only arrow of that colour incident to that tableau.
\begin{proof}
  We will identify elements of $\Omega(n,m)$ with $(n+m)/2$-length tuples representing their lower row.
  Thus the least element of $\Omega(8,2)$ is represented $(2,4,6)$ and the greatest $(6,7,8)$.

  \textbf{Case 1:}
  If $i \in t^{(1)}$ and $i+1 \in t^{(2)}$, then the diagram $\underline{t}$ has a simple cap at $i$ so $u_i \cdot \underline t = \delta \underline{t}$.

  \textbf{Case 2:}
  If $i \in t^{(2)}$ and $i+1 \in t^{(2)}$, then suppose that in $t$, the site at $i$ is connected to the site at $c_0$.
  If $c_-$ and $c_+$ are the elements of $t^{(2)}$ immediately preceding and succeeding $c_0$, then the action of $u_i$ sends
  \begin{equation}
    t = \underline{(k_1,\ldots,c_-,c_+ \ldots, i, i+1, \ldots, k_r)} \mapsto \underline{(k_1, \ldots, c_-, c_0, c_+ \ldots, i+1, \ldots, k_r)} = s
  \end{equation}
  This is clearly lexicographically smaller as $c_0 < c_+$.
  Hence if $s$ and $t$ are comparable $s < t$.

  \textbf{Case 3:}
  If $i \in t^{(1)}$ and $i+1 \in t^{(1)}$ then let $c_0$ be the first site after $i+1$ which lies in the second row of $t$.
  Then
  \begin{equation}
    t= \underline{(k_1,\ldots,c_0, \ldots, k_r)} \mapsto \underline{(k_1, \ldots, i+1, c_0, \ldots, k_r)} = s
  \end{equation}
  where some other site has been removed from the tuple after $c_0$ (it is the site to which $i+1$ is connected).
  Again, $s$ is lexicographically less than $t$.

  \textbf{Case 4:}
  In the final case, $i \in t^{(2)}$ and $i+1 \in t^{(1)}$.
  Here it is clear that
  \begin{equation}
    \underline{(k_1,\ldots,i, \ldots, k_r)} \mapsto \underline{(k_1, \ldots, i+1, \ldots, k_r)}
  \end{equation}
  which is an increase in the partial ordering.
  However, note that each covering strictly increases the value in the $j$-th position of the tuple for some $j$.
  Hence there is no other tuple lying between $t$ and $u_i \cdot t$.

  In case 2 and 3, $u_i$ strictly decreased the tableau under the partial order.
  Hence if there were another $u < s$ such that $u_i \cdot \underline{u} = \underline{s}$ it would have to be under case 4.
  However, it is clear that this cannot occur unless $u = t$ as desired.
\end{proof}

Let $t^-$ be the unique minimum of $\Omega(n,m)$, and $t^+$ the unique maximum.
For example, in $\Omega(9,1)$,
\begin{equation}
  t^- = \young(13579,2468)
  \quad\quad\quad
  t^+ = \young(12345,6789)
\end{equation}
We can now prove an important result about the vanishing of coefficients.
\begin{lemma}
  If $u_i \cdot z = 0$ for all $1 \le i < n$ and some $0 \neq z \in S(n, m)$, then $z$ is uniquely (linearly) determined by its coefficient at $t^+$.
\end{lemma}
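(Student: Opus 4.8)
The plan is to write $z = \sum_{t\in\Omega(n,m)} a_t\,\underline t$ and to show that, under the hypothesis $u_i\cdot z = 0$ for all $i$, each coefficient $a_t$ with $t\neq t^+$ is forced to equal a specific linear combination of the coefficients $a_s$ with $s > t$. Since $\Omega(n,m)$ is finite and $t^+$ is its unique maximum, a descending induction along the partial order then expresses every $a_t$ as a fixed multiple of $a_{t^+}$; this is exactly the claimed uniqueness, and it also shows that the space of such $z$ is at most one-dimensional.

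The first step is to pick, for a given $t\neq t^+$, a suitable index $i$. I would note that $t = t^+$ exactly when $t^{(2)}$ consists of the $\tfrac{n-m}{2}$ largest labels $\{\tfrac{n+m}{2}+1,\dots,n\}$: indeed, if every $i\in t^{(2)}$ with $i<n$ also satisfied $i+1\in t^{(2)}$, then $t^{(2)}$ would be a final segment $\{k,\dots,n\}$ of the required size, hence $= \{\tfrac{n+m}{2}+1,\dots,n\}$. So for $t\neq t^+$ there is some $i\in t^{(2)}$ with $i+1\in t^{(1)}$; this is Case~4 of \cref{lem:dot_induction}, giving $u_i\cdot\underline t = \underline s$ with $s$ covering $t$, $s > t$, and — crucially — $s$ having a simple cap at site $i$.

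Next I would extract the coefficient of $\underline s$ from the relation $u_i\cdot z = 0$. Since $u_i$ sends each basis diagram to $\delta$ times itself or to a single basis diagram, the basis vectors contributing to that coefficient are precisely: $\underline t$ (contributing $a_t$); $\underline s$ itself, since the simple cap at $i$ gives $u_i\cdot\underline s = \delta\underline s$ (contributing $\delta a_s$); and those $t'$ under Cases~2 or~3 of \cref{lem:dot_induction} with $u_i\cdot\underline{t'}=\underline s$ — the uniqueness clause of that lemma rules out any further diagram $r < s$ mapping to $\underline s$. From the explicit descriptions of Cases~2 and~3, the passage $t'\mapsto s$ deletes a larger label from the second row and inserts a smaller one, so the second row of $s$ is componentwise $\le$ that of $t'$; by the componentwise description of the order on $\Omega(n,m)$ this forces $t' > s$. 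Hence the $\underline s$-coefficient of $u_i\cdot z$ is
\begin{equation*}
  a_t + \delta a_s + \sum_{\substack{t' > s\\ u_i\cdot\underline{t'}=\underline s}} a_{t'} = 0,
\end{equation*}
which exhibits $a_t$ as a linear combination of coefficients indexed by tableaux $\ge s$, all strictly above $t$. This completes the inductive step.

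I expect the main obstacle to be the bookkeeping in the third paragraph: correctly enumerating every basis diagram whose $u_i$-image contributes to $\underline s$ and confirming that, apart from $a_t$ itself, each contribution is indexed strictly above $t$. This rests on the observation that the target $s$ of the Case~4 move carries a simple cap at $i$ (so it reappears with coefficient $\delta$), together with the ``covers only, and uniquely'' content of \cref{lem:dot_induction} and the componentwise monotonicity of the order under Cases~2 and~3.
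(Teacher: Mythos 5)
Your proof is correct and follows essentially the same route as the paper's: both arguments extract the coefficient of the "upper" diagram $\underline{s}$ from the relation $u_i\cdot z=0$, use \cref{lem:dot_induction} to see that this coefficient is $a_t+\delta a_s+\sum_{t'>s}a_{t'}$ with $t$ the unique lower contributor, and conclude by induction over the poset $\Omega(n,m)$ (you phrase it as descending induction on elements, the paper as induction on upward-closed subsets, which is the same induction). Your explicit verification that every $t\neq t^+$ admits an $i\in t^{(2)}$ with $i+1\in t^{(1)}$, and that $\underline s$ carries a simple cap at $i$, fills in details the paper leaves terse.
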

\begin{proof}
  We will induct the statement ``the coefficient of the diagram $t$ is uniquely (linearly) determined by that of $t^+$'' using the partial order on $\Omega(n,m)$.
  More precisely, we will show that for each upwards closed subset of $\Omega(n,m)$, $S$, the coefficients of the diagrams in $S$ are determined by that of $t^+$ and induct using inclusion.
  Our base case will be $S = \{ t^+ \}$ which is clear.

  Now, suppose $S \neq \Omega(n,m)$.
  Then there is an element $s \not\in S$ such that there is an $i$ with $u_i \cdot s \in S$.
  Indeed, there is an element $s \not\in S$ such that no $t > s$ exists which is also not in $S$.
  Then $s$ and $t$ differ in exactly two places and $s^{(2)} < t^{(2)}$.
  The two places $s$ and $t$ differ must be labelled $i$ and $i+1$ for some $i$. and so we find ourselves in case 4 of the above proof, where $u_i \cdot \underline{t} = \underline{s}$.

  Let $z = \sum_{u \in \Omega(n,m)} z_u u$.
  Consider now the coefficient of $t$ in $u_i \cdot z$.
  By \cref{lem:dot_induction} it is exactly
  \begin{equation}
    z_s + \delta z_t + \sum_{u \in U} z_u
  \end{equation}
  where $U \subseteq S$.
  Since this vanishes (as $u_i \cdot z = 0$), we see $z_s$ is completely determined by the values in $S$ and so we are done.
\end{proof}

In particular, there is only one (up to scaling) element of $S(n,m)$ killed by $\mathcal{F}^{n-2}(\TL_n)$.
\begin{corollary}\label{cor:single_trivial_submodule}
  There is at most one submodule of $S(n, m)$ isomorphic to the trivial module $S(n,n)$.
\end{corollary}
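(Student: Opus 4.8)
The plan is to reduce the statement immediately to the lemma just proved. The first step is to pin down what a submodule isomorphic to $S(n,n)$ looks like. Recall that $S(n,n)$ has as its basis the single monic $(n,n)$-diagram $\id_n$, so it is one-dimensional, and each generator $u_i$ acts on it as zero since $u_i\cdot \id_n = u_i$ is a diagram of propagation number $n-2<n$ and is therefore killed in $S(n,n)$. Consequently, if $N\subseteq S(n,m)$ is a submodule with $N\cong S(n,n)$, then $N$ is one-dimensional, say $N=Rz$ with $z\neq 0$, and the action on $N$ must agree with the action on $S(n,n)$, forcing $u_i\cdot z=0$ for every $1\le i<n$.

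The second step is to invoke the preceding lemma. Every such $z$ lies in the subspace $Z=\{z\in S(n,m): u_i\cdot z=0 \text{ for all }1\le i<n\}$, and the lemma asserts precisely that a nonzero element of $Z$ is uniquely (linearly) determined by its coefficient at the maximal tableau $t^+$; equivalently, the coefficient map $Z\to R$, $z\mapsto z_{t^+}$, is injective. Hence $\dim_R Z\le 1$, so $Z$ contains at most one line through the origin, and therefore $S(n,m)$ contains at most one submodule isomorphic to $S(n,n)$. (One could also add the converse observation that any nonzero $z\in Z$ really does span a copy of $S(n,n)$, using that $TL_n$ is generated by $\id_n$ and the $u_i$, but this is not needed for the ``at most one'' assertion.)

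Since the substantive work has already been carried out in the two lemmas preceding the corollary, there is essentially no obstacle left: the only point that needs a moment's care is the translation in the first step between ``submodule isomorphic to the trivial module $S(n,n)$'' and ``one-dimensional subspace annihilated by all the $u_i$'', together with the (already recorded) fact that $t^+$ is a well-defined unique maximum of $\Omega(n,m)$ so that the coefficient map $z\mapsto z_{t^+}$ makes sense.
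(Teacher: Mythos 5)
Your proof is correct and is essentially the paper's own argument: the paper likewise deduces the corollary immediately from the preceding lemma, observing that a copy of the trivial module is a line annihilated by all the $u_i$ (equivalently by $\mathcal{F}^{n-2}(TL_n)$), and that the lemma forces the space of such elements to be at most one-dimensional. No gaps.
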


This completes the ingredients for the proof of the claim made at the beginning of the section.

\begin{corollary}
  For $r> s$, the $R$-space $\Hom_{\TL_n}(S(n,r), S(n,s))$ is at most one-dimensional.
\end{corollary}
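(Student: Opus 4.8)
The plan is to pin each morphism down to its ``essential data'' via \cref{lem:morphism_space} and \cref{rem:killed_morphisms}, and then to invoke the uniqueness of a trivial submodule recorded in \cref{cor:single_trivial_submodule}.

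Fix $\theta\in\Hom_{TL_n}(S(n,r),S(n,s))$. By \cref{lem:morphism_space} there is an element $v\in S(r,s)$, namely a linear combination of monic $\underline r\to\underline s$ diagrams, with $\theta(u)=uv$ for every $u\in S(n,r)$; and by \cref{rem:killed_morphisms} this $v$ satisfies $u_iv=0$ for all $1\le i<r$. Thus $v$ lies in the subspace
\begin{equation*}
  T(r,s):=\{\,w\in S(r,s)\ :\ u_iw=0\text{ for all }1\le i<r\,\}.
\end{equation*}
Any nonzero $w\in T(r,s)$ spans a $TL_r$-submodule of $S(r,s)$ on which every $u_i$ acts as zero, i.e.\ a copy of the trivial module $S(r,r)$; hence two linearly independent elements of $T(r,s)$ would give two distinct such submodules, contradicting \cref{cor:single_trivial_submodule}. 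So $\dim_R T(r,s)\le 1$.

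It remains to present $\theta\mapsto v$ as an injective $R$-linear map $\Hom_{TL_n}(S(n,r),S(n,s))\to T(r,s)$. Following the proof of \cref{lem:morphism_space}, I would fix once and for all a pair $y,z\colon\underline n\to\underline r$ with $\langle y|z\rangle=\id_r$ (available when $r\neq0$ and $S(n,r)\neq0$; in the remaining cases the statement is immediate, $v$ being forced to lie in $R\,\id_0$) and take $v$ to be the ``monic part'' of $\langle y|\theta(z)\in\Hom_{\TLcat}(\underline r,\underline s)$. Both $\theta\mapsto\langle y|\theta(z)$ and the passage to the monic part are $R$-linear, so $\theta\mapsto v$ is linear, and it is injective because $v=0$ forces $\theta(u)=uv=0$ on every monic $u$, and these form a basis of $S(n,r)$. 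Combining the two bounds, $\dim_R\Hom_{TL_n}(S(n,r),S(n,s))\le\dim_R T(r,s)\le 1$.

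The step I expect to need genuine care is making ``$\theta\mapsto v$ is well defined and $R$-linear'' rigorous: the $v$ extracted in \cref{lem:morphism_space} depends a priori on the auxiliary choices (of $y,z$, and of which non-monic diagrams to discard), so one must commit to a single canonical recipe and then verify both its additivity and the fact that deleting the non-monic diagrams does not disturb the identity $\theta(u)=uv$ in $S(n,s)$. A minor further point is to note that \cref{cor:single_trivial_submodule} does bound $\dim_R T(r,s)$ and not merely the number of irreducible trivial submodules of $S(r,s)$ --- which is so because a two-dimensional space of trivial-type vectors contains more than one line.
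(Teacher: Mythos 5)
Your proposal is correct and follows the paper's own argument exactly: identify each morphism with an element $v$ via \cref{lem:morphism_space}, observe via \cref{rem:killed_morphisms} that $v$ is killed by all $u_i$ and so spans a trivial submodule of $S(r,s)$, and conclude by \cref{cor:single_trivial_submodule}. Your added care about the linearity and injectivity of $\theta\mapsto v$ is a worthwhile elaboration of a step the paper leaves implicit, but it does not change the route.
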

\begin{proof}
  Every element of $\Hom_{\TL_n}(S(n,r), S(n,s))$ is identified with a morphism $v : \underline{r} \to \underline{s}$ by \cref{lem:morphism_space}.
  If post-composition by this $v$ is a $\TL_n$-morphism from $S(n,r)$ to $S(n,s)$ then any such $v$ spans a trivial submodule of $S(r, s)$ by \cref{rem:killed_morphisms}.  But by \cref{cor:single_trivial_submodule}, we have the result.
\end{proof}

\subsection{Candidates and Composition}
We will denote by $v_{r,s}$ the element of $S(r,s)$ given by the formula
\begin{equation}
  v_{r,s} = \sum_{x} h_{F(x)} x
\end{equation}
where the sum runs over all monic $(r,s)$-diagrams $x$, and $h_{F(x)}$ is the polynomial described in \cref{prop:h_F,def:h_F_x}.
Note that since $h_{F(x)}$ is a polynomial in $\delta$, we can consider this morphism over any ring.
Further, since the diagram $\id_{\frac{r+s}{2}} \otimes \cap^{\frac{r-s}{2}}$ has a totally ordered forest, its coefficient is 1.
Hence $v_{r,s} \neq 0$.
These will be called ``candidate morphisms''.
Through post-composition these engender linear morphisms between all $S(n,r)$ and $S(n,s)$.

\begin{proposition}\label{prop:jwform}\cite[3.6]{graham_lehrer_1998}
  If $s < r < s + 2p^{(k)}$ and $s + r \equiv_{2p^{(k)}} -2$ then the map $x \mapsto x \circ v_{r,s}$ is a morphism of $\TL_n$ modules for every $n$.
\end{proposition}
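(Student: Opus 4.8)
\textit{Proof proposal.}

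The plan is to reduce the statement, via the uniqueness results of \cref{sec:morphism_unique}, to the single assertion that $u_i\cdot v_{r,s}=0$ in $S(r,s)$ for all $1\le i<r$, and then to prove that assertion by a quantum-number computation organised along the $(\ell,p)$-digit structure. For the reduction: post-composition by $v_{r,s}:\underline r\to\underline s$ is automatically a morphism of left $TL_n$-modules $\Hom_{\TLcat}(\underline n,\underline r)\to\Hom_{\TLcat}(\underline n,\underline s)$, so the only point is that it descends to the cell-module quotients $S(n,r)\onto S(n,s)$. Once $u_i\cdot v_{r,s}=0$ for all $i$, then---since every diagram of propagation $<r$ in $TL_r$ is a scalar multiple of a product of the $u_j$ (cf.\ \cref{eq:tln_filt})---$v_{r,s}$ is annihilated by the whole ideal $\mathcal F^{r-2}(TL_r)$, which gives exactly the desired well-definedness and says that $v_{r,s}$ spans a trivial submodule of $S(r,s)$; conversely \cref{rem:killed_morphisms} shows $u_i\cdot v_{r,s}=0$ is necessary, and \cref{cor:single_trivial_submodule} shows the resulting morphism is the unique one. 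So everything comes down to proving $u_i\cdot v_{r,s}=0$.

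To evaluate $u_i\cdot v_{r,s}=\sum_x h_{F(x)}\,(u_i\cdot x)$ I would apply the trichotomy of \cref{lem:dot_induction} to each monic $(r,s)$-diagram $x$: if $x$ has a simple cup at $\{i,i+1\}$ then $u_i\cdot x=\delta x$; if sites $i,i+1$ are two adjacent through-strands of $x$ then $u_i\cdot x$ develops a cup among its target sites and hence is zero in $S(r,s)$; and otherwise $u_i\cdot x$ is again a monic diagram $x'$, now carrying a simple cup at $\{i,i+1\}$. Thus $u_i\cdot v_{r,s}$ is supported on the monic diagrams $y$ carrying a simple cup at $\{i,i+1\}$; collecting the surviving terms by their target and invoking the final clause of \cref{lem:dot_induction} (at most one diagram lying below $y$ maps onto $y$), the coefficient of such a $y$ is an explicit combination, with $\delta$-coefficients, of $h_{F(y)}$ and of the (at most two, one ``from the left'' and one ``from the right'') hook-polynomials $h_{F(x)}$ with $u_i\cdot x=y$. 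The content is to show each of these combinations vanishes.

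I would prove this by induction of the ``fractal'' type dictated by the $(\ell,p)$-digits. In the base case $k=1$---where $r$ and $s$ are mirror images of each other about a point $\equiv-1\pmod\ell$ with $r-s<2\ell$---the hook-polynomials of the monic $(r,s)$-diagrams are explicit, the local move $x\mapsto x'$ changes the nesting forest $F(x)$ by a single rotation and so multiplies $h$ by a definite ratio of quantum numbers, and the required cancellation telescopes under repeated use of $\delta[m]=[m+1]+[m-1]$; the hypotheses $s<r<s+2\ell$ and $s+r\equiv_{2\ell}-2$ are exactly what forces every quantum number appearing as a denominator in these ratios to be nonzero, so that the manipulation is legitimate. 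For $k>1$ one mirrors the multiplicative structure of quantum numbers: \cref{lem:quantum_shatter} and the quantum Lucas theorem \cref{thm:lucas} let a monic $(r,s)$-diagram and its forest split compatibly into an outer $p$-scale part and an inner $p^{(k-1)}$-scale part, with $h_{F(x)}$ factoring as a Gaussian binomial in the outer indices times a hook-polynomial at the previous scale; the inductive hypothesis then disposes of the inner cancellation and \cref{thm:lucas} of the outer one.

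The crux---and where I expect the real work to lie---is the combinatorial bookkeeping of the last paragraph: pinning down the precise quantum-number ratio produced by a single rotation of a nesting forest, and then checking that the ``left'', ``right'' and diagonal contributions to each coefficient of $y$ cancel (this is where $\delta[m]=[m+1]+[m-1]$ enters and where the congruence on $r,s$ is used). Choosing an induction hypothesis strong enough to survive the restriction $TL_r\downarrow TL_{r-1}$ and to cover the boundary cases $i=1$, $i=r-1$, and those $y$ receiving no contribution from a lower diagram, is the delicate part; and since the statement genuinely fails outside the stated range---there $S(r,s)$ carries no trivial submodule at all, so no such morphism can exist---one must also be sure no \emph{other} vanishing of quantum numbers is being tacitly invoked.
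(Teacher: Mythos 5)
First, a point of comparison: the paper does not prove this proposition --- it is quoted from \cite[3.6]{graham_lehrer_1998}, with only a caveat about the first line of that proof --- so there is no in-paper argument to measure yours against. Your opening reduction is correct and matches the framework the paper builds around \cref{rem:killed_morphisms} and \cref{cor:single_trivial_submodule}: post-composition by $v_{r,s}$ is automatically $TL_n$-equivariant on $\Hom$-spaces, and descent to the cell quotients is equivalent to $u_i\cdot v_{r,s}=0$ in $S(r,s)$ for all $1\le i<r$. The difficulty is that everything after the reduction is a plan, not a proof. The entire content of the proposition is the coefficient identity that makes $u_i\cdot v_{r,s}$ vanish --- the exact ratio by which $h_{F(x)}$ changes under a local move, the resulting linear relation, and the place where the hypotheses $s<r<s+2p^{(k)}$ and $r+s\equiv_{2p^{(k)}}-2$ enter --- and you explicitly defer all of this (``where I expect the real work to lie''). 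Note also that the congruence is needed to make certain quantum numbers \emph{vanish} in the cancellation, not (as you suggest) to keep denominators nonzero: the $h_F$ are polynomials, so no denominators occur.

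The bookkeeping you do set up fails at a concrete point. You claim each $y$ carrying an arc at $\{i,i+1\}$ receives contributions from at most two diagrams $x\neq y$ with $u_i\cdot x=y$. \cref{lem:dot_induction} only gives uniqueness of the preimage lying \emph{below} $y$ (case 4); the preimages lying above $y$ (cases 2 and 3) correspond to the other arcs of $y$ bounding the face just outside the arc $\{i,i+1\}$, and there can be arbitrarily many. For instance, for $y$ with arcs $\{1,8\},\{2,3\},\{4,5\},\{6,7\}$ in $S(8,0)$ and $i=4$ there are three such $x$, and nesting a further arc around everything produces four. So the linear combination whose vanishing you must establish is not the one you describe. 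Finally, a warning for your proposed base case: with the paper's conventions ($u_i^2=\delta u_i$ and $h_F$ a ratio of quantum factorials), the telescoping you invoke yields $[j-1]+\delta[j]+[j+1]=2\delta[j]$ rather than $0$; the element of $S(n,n-2)$ annihilated by every $u_j$ is $\sum_i(-1)^{i}[i]x_i$, not $\sum_i[i]x_i$. Signs --- equivalently the substitution $\delta\mapsto-\delta$ relating these conventions to Graham--Lehrer's --- must be tracked before any cancellation can legitimately be claimed.
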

Readers who follow the proof of Graham and Lehrer for \cref{prop:jwform} should be cautioned that the first line should ask for the map $\mu : t \to s$ to be \emph{standard} instead of monic.
By the argument in \cref{sec:morphism_unique}, we can phrase \cref{prop:jwform} as constructing the trivial submodules of $S(r,s)$ for certain $r$ and $s$.

Despite not all of these candidate morphisms being $\TL_n$ morphisms, some compositions of them are.
We investigate composition of these morphisms below.

\begin{proposition}
  Let $a \ge b \ge c$ be naturals of the same parity.
  Then
  \begin{equation}
    v_{a,b}\circ v_{b,c} =\gaussianquant{\frac{a-c}{2}}{\frac{a-b}{2}} v_{a,c}.
  \end{equation}
\end{proposition}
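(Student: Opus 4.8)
The plan is to reduce the general composition law to the single case in which the ``jump'' $(a-b)/2$ equals $1$, by an induction that peels off one cup at a time and is glued together by the Gaussian‑binomial recursion $\binom{N}{k}_\delta=\tfrac{[N]_\delta}{[k]_\delta}\binom{N-1}{k-1}_\delta$. To begin, note that $v_{a,b}$ and $v_{b,c}$ are $\Z[\delta]$-linear combinations of monic diagrams, and that the composite of a monic $(a,b)$-diagram with a monic $(b,c)$-diagram is again a monic $(a,c)$-diagram: no closed loop can appear, since a monic diagram has no cup joining two of its target sites, so there is nothing on the $(a,b)$-side to close off a cup of the $(b,c)$-side, and exactly $c$ through-strands survive. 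Hence both $v_{a,b}\circ v_{b,c}$ and $v_{a,c}$ lie in the free $\Z[\delta]$-module on monic $(a,c)$-diagrams, and --- since an identity of $\Z[\delta]$-combinations holding over $\Q(\delta)$ holds over $\Z[\delta]$, hence over every pointed ring by base change --- it suffices to work over $\Q(\delta)$, where non-zero quantum numbers may be inverted.

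\textbf{The induction.} I would induct on $k=(a-b)/2$. For $k=0$ the claim is $v_{a,a}\circ v_{a,c}=v_{a,c}$. For $k=1$ it is the base case
\begin{equation*}
  v_{a,a-2}\circ v_{a-2,c}=\Bigl[\tfrac{a-c}{2}\Bigr]_\delta\, v_{a,c}\tag{$\ast$}
\end{equation*}
(for all admissible parameters), treated below. For $k\ge 2$, apply $(\ast)$ with bottom $b$ to write $v_{a,b}=[k]_\delta^{-1}\,v_{a,a-2}\circ v_{a-2,b}$; then by associativity $v_{a,b}\circ v_{b,c}=[k]_\delta^{-1}\,v_{a,a-2}\circ\bigl(v_{a-2,b}\circ v_{b,c}\bigr)$. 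The inner composite has jump $k-1$, so the inductive hypothesis rewrites it as $\binom{N-1}{k-1}_\delta\,v_{a-2,c}$ with $N=(a-c)/2$; applying $(\ast)$ to $v_{a,a-2}\circ v_{a-2,c}=[N]_\delta\, v_{a,c}$, the total coefficient $[k]_\delta^{-1}[N]_\delta\binom{N-1}{k-1}_\delta$ is exactly $\binom{N}{k}_\delta$.

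\textbf{The base case.} A monic $(a,a-2)$-diagram carries a single cup, necessarily at adjacent sites $(i,i+1)$; write it $x_i$. Rotating its $a-2$ target sites as in \cref{def:h_F_x}, the forest $F(x_i)$ is a single tree --- a chain of length $i-1$ above a node whose two children are the leaf $(i,i+1)$ and a chain of length $a-i-1$ --- so \cref{prop:h_F} gives $h_{F(x_i)}=[a-i]_\delta$ and $v_{a,a-2}=\sum_{i=1}^{a-1}[a-i]_\delta\, x_i$. Composing $x_i$ on the left with a monic $(a-2,c)$-diagram just re-inserts the cup $(i,i+1)$, so the coefficient of a fixed monic $(a,c)$-diagram $z$ in $v_{a,a-2}\circ v_{a-2,c}$ is $\sum_{(i,i+1)\subset z}[a-i]_\delta\, h_{F(z\ominus(i,i+1))}$, where the sum is over cups of $z$ at adjacent sites $(i,i+1)$ and $z\ominus(i,i+1)$ deletes that cup. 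Since deleting the leaf $(i,i+1)$ from $F(z)$ produces $F(z\ominus(i,i+1))$, $(\ast)$ is equivalent to the leaf-deletion identity $\sum_{(i,i+1)\subset z}[a-i]_\delta\, h_{F(z\ominus(i,i+1))}=\bigl[\tfrac{a-c}{2}\bigr]_\delta\, h_{F(z)}$ for the hook-forest polynomial, which I would prove by induction on $|F(z)|$ by peeling off a root of $F(z)$ and invoking the two multiplicative relations from the proof of \cref{prop:h_F} ($h_F=\binom{|F|}{|F_1|}_\delta h_{F_1}h_{F_2}$ for a disjoint union, $h_F=h_{F\setminus\{\mathrm{root}\}}$ for a tree) together with $q$-Pascal.

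\textbf{The hard part.} The skeleton above is routine; the real work is the last leaf-deletion identity, the delicate points being how the ``positional'' weight $a-i$ of a cup transforms under root-peeling, and that the through-strand nodes of $F(z)$ --- which are not cups of $z$ --- only contribute through the net factor $\bigl[\tfrac{a-c}{2}\bigr]_\delta$. An alternative, of comparable difficulty, avoids $(\ast)$ altogether: prove directly that for each monic $(a,c)$-diagram $z$, $\sum_{z=x\circ y}h_{F(x)}h_{F(y)}=\binom{(a-c)/2}{(a-b)/2}_\delta\, h_{F(z)}$, after identifying factorisations $z=x\circ y$ with size-$\tfrac{a-b}{2}$ order ideals in the nesting forest of the cups of $z$ --- by planarity, a cup of $x$ can never enclose a cup of $y$ --- and comparing hook-forest polynomials.
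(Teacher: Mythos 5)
Your route is genuinely different from the paper's. The paper does not attempt a self-contained argument: it cites Cox--Graham--Martin for the fact that $v_{a,b}\circ v_{b,c}$ is \emph{some} scalar multiple of $v_{a,c}$, and then pins the scalar down by computing the coefficient of a single well-chosen (maximally nested) diagram, which factors through $\underline{b}$ in exactly one way, with $h$-values $\gaussianquant{\frac{a-c}{2}}{\frac{b-c}{2}}$ and $1$. Your skeleton would instead give a complete proof from first principles, and the parts you actually carry out are correct: the composite of monic diagrams is monic with no closed loops, so the identity lives in the free $\Z[\delta]$-module on monic $(a,c)$-diagrams and may be checked over $\Q(\delta)$; the peeling induction glued by $\gaussianquant{N}{k}=\frac{[N]}{[k]}\gaussianquant{N-1}{k-1}$ is valid; $h_{F(x_i)}=[a-i]$ is the right value (for $i=1$ the forest is two disjoint chains rather than a single tree, but the $h$-value is unaffected); and the coefficient-of-$z$ computation correctly identifies factorisations through $\underline{a-2}$ with the adjacent cups of $z$.

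The genuine gap is exactly where you place it: the leaf-deletion identity $\sum_{(i,i+1)\subset z}[a-i]\,h_{F(z\ominus(i,i+1))}=\bigl[\tfrac{a-c}{2}\bigr]h_{F(z)}$ \emph{is} the base case, and ``peel a root and use $q$-Pascal'' is not yet a proof. The two obstacles you name are real: the weights $[a-i]$ depend on the absolute site $i$ and do not restrict cleanly to the subforests produced by root-peeling, and the through-strand nodes of $F(z)$, which never occur on the left-hand side, must be shown to contribute exactly the factor $\bigl[\tfrac{a-c}{2}\bigr]$. Until that identity is established, the proposition is not proved. Your alternative --- identifying factorisations $z=x\circ y$ with size-$\tfrac{a-b}{2}$ order ideals in the cup-nesting forest of $z$ (the enclosure argument you give for why $x$-cups cannot contain $y$-cups is correct) and proving $\sum_{z=x\circ y}h_{F(x)}h_{F(y)}=\gaussianquant{\frac{a-c}{2}}{\frac{a-b}{2}}h_{F(z)}$ directly --- is precisely the all-coefficients version of the paper's one-coefficient computation and is probably the cleaner target, but it too is left unproved. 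In fairness, the paper delegates the corresponding difficulty to the literature, so completing either identity would supply something the paper itself does not contain.
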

\begin{proof}

The maximum monic diagram $\underline{a} \to \underline{c}$ factors through $\underline{b}$ only by the following diagrams.

\begin{equation}
  \vcenter{\hbox{
  \begin{tikzpicture}
    \draw (0,5) -- (0,-1.5);
    \draw[very thick] (0,4.5) -- (1,4.5);
    \draw[very thick] (0,3.5) -- (1,3.5);
    \draw[very thick] (0,3.3) -- (1,3.3);
    \draw[very thick] (0,2.3) -- (1,2.3);

    \draw[very thick] (0,2.1) arc (90:-90:1);
    \draw[very thick] (0,1.9) arc (90:-90:0.8);
    \draw[very thick] (0,1.3) arc (90:-90:0.2);

    \draw[very thick] (0,-.1) -- (1,-.1);
    \draw[very thick] (0,-1.1) -- (1,-1.1);

    \node at (0.5, 4.1) {$\vdots$};
    \node at (0.5, 2.9) {$\vdots$};
    \node at (0.5, -.5) {$\vdots$};
    \node at (0.49, 1.1) {$\cdots$};

    \draw [decorate,decoration={brace,amplitude=5}] (-.1,3.45) -- (-.1,4.55);
    \draw [decorate,decoration={brace,amplitude=5}] (-.1,2.25) -- (-.1,3.35);
    \draw [decorate,decoration={brace,amplitude=5}] (-.1,0.05) -- (-.1,2.15);
    \draw [decorate,decoration={brace,amplitude=5}] (-.1,-1.15) -- (-.1,-.05);

    \node at (-1, 4.1) {$c$};
    \node at (-1, 2.9) {$\frac{b-c}{2}$};
    \node at (-1, 1.1) {$a-b$};
    \node at (-1, -.5) {$\frac{b-c}{2}$};
  \end{tikzpicture}}}
    \quad\quad\quad
    \quad\quad\quad
  \vcenter{\hbox{
  \begin{tikzpicture}
    \draw (0,3.5) -- (0,-0.1);
    \draw[very thick] (0,3.3) -- (1,3.3);
    \draw[very thick] (0,2.3) -- (1,2.3);

    \draw[very thick] (0,2.1) arc (90:-90:1);
    \draw[very thick] (0,1.9) arc (90:-90:0.8);
    \draw[very thick] (0,1.3) arc (90:-90:0.2);

    \node at (0.5, 2.9) {$\vdots$};
    \node at (0.49, 1.1) {$\cdots$};

    \draw [decorate,decoration={brace,amplitude=5}] (-.1,2.25) -- (-.1,3.35);
    \draw [decorate,decoration={brace,amplitude=5}] (-.1,0.05) -- (-.1,2.15);

    \node at (-1, 2.9) {$c$};
    \node at (-1, 1.1) {$b-c$};
  \end{tikzpicture}}}
\end{equation}
They have coefficients $\gaussianquant{\frac{a-c}{2}}{\frac{b-c}{2}}$ and 1 respectively.
Thus if indeed $v_{a,b}\circ v_{b,c}$ is a multiple of $v_{a,c}$, it must have the given coefficient.
The proof that the composition of candidate morphisms is a candidate morphism (up to a scalar) can be found in~\cite{cox_graham_martin_2003} where an infinite tower of algebras is employed.
\end{proof}

\section{Decomposition Numbers}\label{sec:decomposition}
We first prove a lower bound on the decomposition numbers of $S(n,m)$.

\begin{theorem}\label{thm:lower}
  Let $n + 1 = [n_i, n_{i-1}, \ldots, n_0]_{p,\ell}$.
  Then the standard module $S(n,m)$ has a trivial composition factor if $m$ lies in
  \begin{equation}
\supp(n) = \{n_i p^{(i)} \pm n_{i-1}p^{(i-1)} \pm \cdots \pm n_1p^{(1)} \pm n_0p^{(0)} -1 \}.
  \end{equation}
\end{theorem}
Before proceeding to the proof of \cref{thm:lower} it is worth considering the structure of $\supp(n)$.
In \cref{fig:in} we show the sets $\supp(n)$ for $0 \le n \le 700$ where $\ell = 5$ and $p = 3$.
A pixel in the $m$-th column of the $n$-th row is black iff $m \in \supp(n)$.
We note it exhibits a beautiful fractal-like structure.

\begin{figure}[htpb]
\begin{center}
  \includegraphics[width=0.5\textwidth]{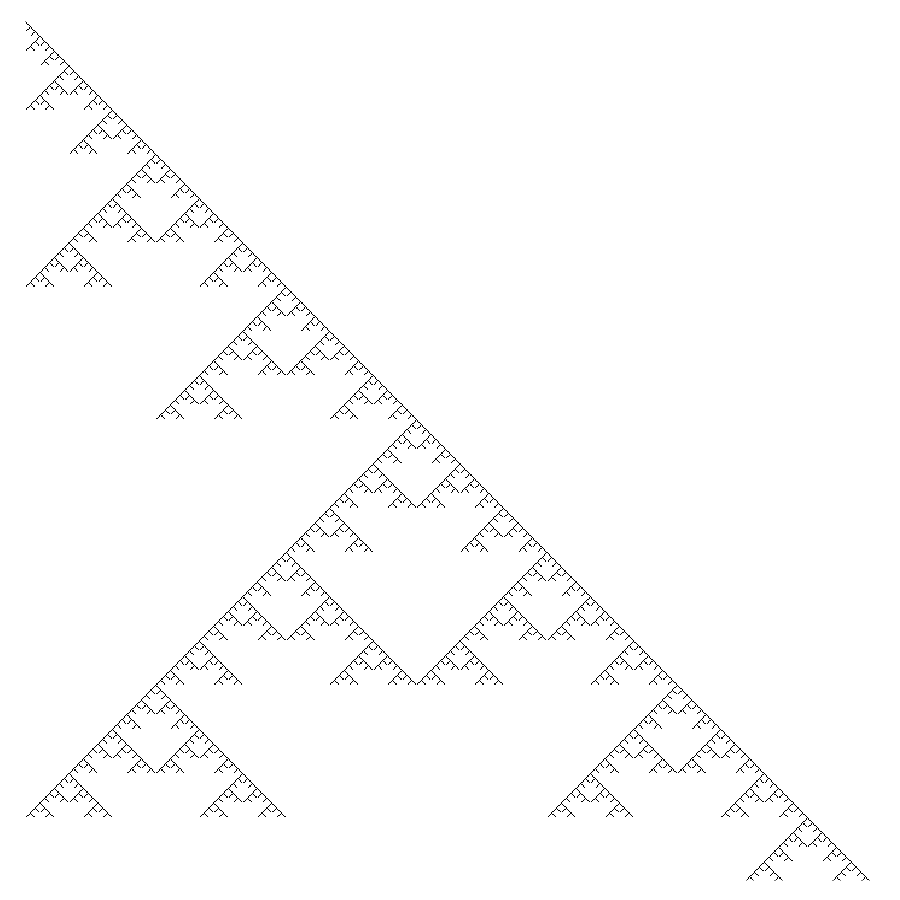}
\end{center}
\caption{Values of $\supp(n)$ for $\ell = 5$ and $p=3$.}%
\label{fig:in}
\end{figure}

As in the theory for the algebraic group $SL_2$, we define a $k$-wall to be a number of the form $ap^{(k)}-1$.

\begin{lemma}
  Suppose that $a p^{(k)} -1 \le m\le n < (a+1)p^{(k)}-1$.
  Then if $m' = 2ap^{(k)}-2 - m$ is the reflection of $m$ about the $k$-wall beneath it, $m \in \supp(n) \Rightarrow m' \in \supp(n)$.
\end{lemma}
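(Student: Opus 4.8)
The plan is to argue purely combinatorially from the definition of $I_n$, working in the shifted coordinate $M := m+1$; there the $k$-wall $ap^{(k)}-1$ sits at $ap^{(k)}$ and the claimed reflection is $M \mapsto M' := 2ap^{(k)}-M$, so the assertion is that $M \in I_n$ (together with the range hypotheses) implies $M' \in I_n$. Write the $(\ell,p)$-adic expansion $n+1 = \sum_{j=0}^{i} n_j p^{(j)}$ and split it at level $k$: set $A := \sum_{j\ge k} n_j p^{(j)}$ and $B := \sum_{j<k} n_j p^{(j)}$, so $n+1 = A+B$. The first point is that $B$ is small: from $n_0<\ell=p^{(1)}$ and $n_j<p$ ($j\ge1$) one gets $B \le (\ell-1) + (p-1)\sum_{j=1}^{k-1}\ell p^{j-1} = p^{(k)}-1$, which is just the fact that the $(\ell,p)$-numeration is a genuine mixed-radix system. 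Since every $p^{(j)}$ with $j\ge k$ is divisible by $p^{(k)}$, the decomposition $n+1=A+B$ with $0\le B<p^{(k)}$ is division of $n+1$ by $p^{(k)}$ with remainder; hence the hypothesis $ap^{(k)}-1\le n<(a+1)p^{(k)}-1$ forces $A=ap^{(k)}$. Assuming (as is implicit in having a wall at all) that $a\ge1$, it follows that $p^{(k)}\le A\le n+1<p^{(i+1)}$, so $k\le i$. Finally, the hypothesis $ap^{(k)}-1\le m\le n$ says precisely that $M=m+1$ lies in the half-open alcove $[A,\,A+p^{(k)})$.

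The substance of the proof is then to identify $I_n\cap[A,\,A+p^{(k)})$. A member of $I_n$ has the form $x=\sum_{j=0}^{i}\epsilon_j n_j p^{(j)}$ with $\epsilon_i=+1$ and $\epsilon_j\in\{\pm1\}$ otherwise; split it as $x=x_{\mathrm{hi}}+x_{\mathrm{lo}}$ along $j\ge k$ versus $j<k$. Then $x_{\mathrm{hi}}$ is an integer multiple of $p^{(k)}$ with $x_{\mathrm{hi}}\le A$ (flipping a high sign only decreases the sum), while $|x_{\mathrm{lo}}|\le B<p^{(k)}$. Writing $x_{\mathrm{hi}}=hp^{(k)}$ with $h\le a$, the requirement $x\in[ap^{(k)},\,(a+1)p^{(k)})$ forces $x_{\mathrm{lo}}\in[(a-h)p^{(k)},\,(a-h+1)p^{(k)})$, which with $|x_{\mathrm{lo}}|<p^{(k)}$ gives $h=a$. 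Hence every $x\in I_n\cap[A,\,A+p^{(k)})$ has the shape $x=A+c$ with $c=\sum_{j<k}\epsilon_j n_j p^{(j)}\ge0$; in particular $M=A+c$ for some admissible choice of low-part signs.

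The conclusion then follows immediately. The reflected point is $M'=2A-M=A-c$, and this again lies in $I_n$: take the sign pattern $\epsilon'_j=+1$ for $k\le j\le i$ (permissible because $k\le i$, so in particular $\epsilon'_i=+1$) and $\epsilon'_j=-\epsilon_j$ for $0\le j<k$, which gives $\sum_j\epsilon'_j n_j p^{(j)} = A-c$. Translating back, $m'+1=M'\in I_n$, as required. The remaining checks are routine: $c\le B<p^{(k)}\le A$ gives $M'\ge1$, so $m'\ge0$; $M'=A-c\le A\le n+1$ gives $m'\le n$; and $M'-M=2A-2M$ is even, so $m'$ and $m$ have the same parity and $S(n,m')$ is a bona fide cell module.

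The only step carrying real content is the middle one: the observation that confinement to a single alcove $[A,\,A+p^{(k)})$ forces every high-digit sign of a member of $I_n$ to equal $+$ and leaves a non-negative residual low-digit sum. Everything else — the mixed-radix bound $B<p^{(k)}$, the sign flip, the parity and range bookkeeping — is mechanical. Conceptually, the lemma says exactly that $I_n$ is reflection-symmetric within each alcove about the walls beneath it, which is the self-similarity responsible for the Sierpiński-type picture of \cref{fig:in}.
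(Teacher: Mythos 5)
Your proof is correct and takes essentially the same approach as the paper's: the hypotheses $ap^{(k)}-1\le m\le n<(a+1)p^{(k)}-1$ force every sign $\epsilon_j$ with $j\ge k$ to be $+$ (so the high part of $m+1$ equals $ap^{(k)}$), and the reflection is realised by flipping the signs at levels $j<k$ — your version just spells out the forcing argument that the paper compresses into one line. Your reading of the reflection as the parity-preserving $m'+1 = 2ap^{(k)} - (m+1)$ (rather than the literal $m' = 2ap^{(k)}-1-m$, which would change the parity of $m$) is the correct interpretation of the statement.
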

\begin{proof}
  Write $n+1 = \sum_{j=0}^k n_{j}p^{(j)}$ and let $m+1 = \sum_{R^+}n_{j}p^{(j)} - \sum_{R^-}n_{j}p^{(j)}$ for $R^+ \sqcup R^- = \{0,\ldots,k\}$ and $k \in R^+$ with $n_k = a$.
  Then $k-1 \in R^+$ lest $m < a p^{(k)}-1$ and 
  \begin{align}
    m' &= 2ap^{(k)} - \sum_{j \in R^+}n_j p^{(j)} + \sum_{j \in R^-} n_j p^{(j)}-1\nonumber\\
    &= \sum_{j \in R^-\cup\{k\}} n_j p^{(j)}- \sum_{j \in R^+\setminus\{k\}}n_j p^{(j)} - 1 \in \supp(n)
  \end{align}
\end{proof}

We now return to \cref{thm:lower}.

\begin{proof}
  The reflection of $n+1$ about the line $n_{i}p^{(i)}$ is $-\sum_{r=0}^{i-1}n_{j}p^{(j)} + n_{i}p^{(i)}$.
  Hence if $m_{i}$ is the reflection of $n$ about the $i$-wall below $n$, this is the value of $m_{i} + 1$.
  Note that $m_{i}$ is the least value in $\supp(n)$.

  Similarly, if $m_j$ is the reflection of $n$ about the $j$-wall below $n$, then
  \begin{equation}\label{eq:mjdef}
    m_j+1 = -\sum_{r = 0}^{j-1}n_{r}p^{(r)} + \sum_{r=j}^i n_r p^{(r)}
  \end{equation}

  Now, $n - m_j = 2\sum_{r = 0}^{j-1}n_{r}p^{(r)} < 2 p^{(j)} = 2\ell p^{j-1}$ and $n + m_{j} = 2\sum_{r=j}^n n_{r}p^{(r)} - 2$.
  Thus by \cref{prop:jwform} $v_{n,m_j}$ encodes a nonzero $\TL_n$-morphism.
  Hence each $S(n, m_j)$ has a trivial composition factor in the image of $v_{n, m_j}$, namely a trivial submodule.

  Now suppose $m \in \supp(n)$.
  Let $R_m^+ \sqcup R_m^-$ be the partition of $\{0, \ldots, i\}$ such that $m + 1 = \sum_{j\in R_m^+}n_{j}p^{(j)} - \sum_{j\in R_m^-}n_{k}p^{(j)}$ and let $\epsilon_j^m = +$ if $j \in R_m^+$ and $-$ if $j \in R_m^-$.
  We can now define a partial order on such $m$ where $m \prec m'$ iff $R_m^+ \subset R_{m'}^+$.
  In this order, $n$ is the unique maximal element and $m_{i}$ is the unique minimal element.
  This order is refined by the usual order on the naturals.

  Suppose $m \prec m'$.
  Then $m' - m = 2\sum_{j \in R_{m'}^- \setminus R_m^-} n_j p^{(j)}$.
  If further $m' \prec m''$, it is clear that in the $(\ell, p)$-adic decomposition of $(m''-m')/2$ and $(m''-m)/2$, the digits of the former are at least as large as the digits of the latter and hence by \cref{thm:lucas} we see that
  \begin{equation}
    \gaussianquant{\frac{m''-m}{2}}{\frac{m''-m'}{2}}
  \end{equation}
  is nonzero.
  Thus the composition of $v_{m'',m'}$ and $v_{m', m}$ is a nonzero multiple of $v_{m'', m}$.

  In particular, let $m'_{j}$ be the reflection of $m_{i}$ around the $j$-wall \emph{above} $m_{i}$.
  That is,
  \begin{equation}\label{eq:m'jdef}
    m'_j + 1 = \sum_{r = 0}^{j-1} n_{r}p^{(r)} - \sum_{r = j}^{i-1}n_{r}p^{(r)} + n_{i}p^{(i)}
  \end{equation}
  Similarly to the above argument, we see $v_{m'_j,m_i}$ encodes a nonzero $\TL_n$ morphism.
  Its kernel is thus a submodule of $S(n, m'_j)$.
  However, the image of $v_{n, m'_j}$ is a one dimensional linear subspace of $S(n, m'_j)$ and the quotient by the kernel of $v_{m'_j, m_i}$ is isomorphic to the image of $v_{n,m_i}$ --- the trivial $\TL_n$-module.
  Hence the trivial module appears as a composition factor of $S(n,m'_j)$.

  It remains to show the result for all members of $\supp(n)$ not of the form given in \cref{eq:m'jdef,eq:mjdef}.
  The proof is by induction, and \cref{eq:m'jdef,eq:mjdef} can be considered the base cases (although the base case of $n$ and $m_1$ would also be sufficient).

  We will make the slightly stronger statement that not only do these $S(n,m)$ have a trivial composition factor, but that this factor is ``embodied'' by $v_{n, m}$.
  That is to say that there is a $\TL_n$-submodule, $N(n,m)$, of $S(n,m)$ such that the image of $v_{n,m}$ is isomorphic to the trivial module in $S(n,m)/N(n,m)$.
  In the above,  $N(n,m_j)$ is the zero submodule and $N(n, m'_j) = \ker v_{m'_j, m_i}$.

  Recall the definitions of the ``signs'', $\epsilon^m$ such that
  \begin{equation}
    m+1 = \sum_{j=0}^i \epsilon^m_j n_j p^{(j)}.
  \end{equation}
  For example, the tuple $\epsilon^n = (+,+,\ldots,+)$, and $\epsilon^{m_i} = (-,-,\ldots,-,+)$.
  Note that $\epsilon^m_i = +$ for all $m \in \supp(n)$.

  We will now state our inductive step before tying it all in together in a well-formed induction argument.
  Suppose that the stronger result holds for $m \prec m'$ where $\epsilon^{m}_j = -$ for all $0\le j \le t$ and $\epsilon^{m'}_j = +$ for all $0\le j \le t$.
  Further, suppose $\epsilon^{m} = \epsilon^{m'}$ for larger indices.

  Then $m$ and $m'$ are reflections of each other about a $t$-wall and so $v_{m',m}$ is a $\TL_n$-morphism (to be precise: $m' - m = 2\sum_{j=0}^t n_{j}p^{(j)} < 2p^{(t+1)}$ and their sum is $2\sum_{j=t+1}^i n_j p^{(j)} - 2$ so the conditions of \cref{prop:jwform} are met).

  Since $m \prec m' \prec n$, we see that $v_{m', m} \circ v_{n,m'}$ is a nonzero multiple of $v_{n,m}$.
  As before, for any $m \prec m'' \prec m'$, a reflection of $m'$ about a $s$-wall for $s < t$, we see that $v_{m', m''}$ must incur a trivial factor in its image.
  Similarly if $m''$ is a reflection of $m$ over a $s$-wall, the kernel of $v_{m'', m}$ forms $N(n, m'')$.

  The trivial factor in $m''$ is embodied (in either case) by $v_{m', m''} \circ v_{n,m'}$.
  Again, as $m''\prec m' \prec n$, this is a nonzero multiple of $v_{n, m''}$, so the stronger statement holds for all such $m''$.

  We are at last ready to state our induction.
  For any $m \in \supp(n)$, let the ``twistiness'' of $m$ be the number of times $\epsilon^m$ changes sign when read.
  That is, the twistiness of $m$ is the number of indices $j$ such that $\epsilon^m_j \neq \epsilon^m_{j+1}$.

  The numbers $n$ and $m_i$ each have twistiness 1 and form our base case.
  Suppose that $m\in \supp(n)$ and the result is known for all numbers of lesser twistiness.
  Let $t$ be the smallest index such that $\epsilon^m_t \neq \epsilon^m_{t+1}$ and $\tilde m$ be that element such that $\epsilon^{\tilde m}_j = \epsilon^m_{t+1}$ for all $j \le t$ and matches $\epsilon^m$ elsewhere.
  Then the twistiness of $\tilde m$ is one less than that of $m$.
  If we repeat the process to $\tilde m$ to get $\tilde m'$ then either $\tilde m' \prec m \prec \tilde m$ or $\tilde m \prec m \prec \tilde m'$.

  \begin{figure}[htpb]
  \begin{center}
    \begin{tikzpicture}[scale=0.4]
  \foreach \i in {0,...,31}
  {
    \fill (\i, 0) circle (0.1);
  };
  \foreach \i in {0,...,15}
  {
    \draw (2*\i+0.5, 1) -- (2*\i, 0)  node[midway, left=-2pt]{\tiny$-$};
    \draw (2*\i+0.5, 1) -- (2*\i+1, 0)  node[midway, right=-2pt]{\tiny$+$};
  };
  \foreach \i in {0,...,7}
  {
    \draw (4*\i+1.5, 2) -- (4*\i+0.5, 1) node[midway, left=-1pt]{\tiny$-$};;
    \draw (4*\i+1.5, 2) -- (4*\i+2.5, 1) node[midway, right=-1pt]{\tiny$+$};;
  };
  \foreach \i in {0,...,3}
  {
    \draw (8*\i+3.5, 3) -- (8*\i+1.5, 2) node[midway, left]{\tiny$-$};
    \draw (8*\i+3.5, 3) -- (8*\i+5.5, 2) node[midway, right]{\tiny$+$};
  };
  \foreach \i in {0,...,1}
  {
    \draw (16*\i+7.5, 4) -- (16*\i+3.5, 3) node[midway, above=-1.5pt]{\tiny$-$};
    \draw (16*\i+7.5, 4) -- (16*\i+11.5, 3) node[midway, above=-1.5pt]{\tiny$+$};
  };
  \foreach \i in {0}
  {
    \draw (32*\i+15.5, 5) -- (32*\i+7.5, 4) node[midway, above]{$-$};
    \draw (32*\i+15.5, 5) -- (32*\i+23.5, 4) node[midway, above]{$+$};
  };
  \node at (10, -.5) {\tiny$m''$};
  \node at (11, -.5) {\tiny$m'$};
  \node at (8, -.5) {\tiny$m$};
  \node at (31, -.5) {\tiny$n$};
  \node at (0, -.5) {\tiny$m_5$};
  \node at (16, -.5) {\tiny$m_4$};
  \node at (24, -.5) {\tiny$m_3$};
  \node at (28, -.5) {\tiny$m_2$};
  \node at (30, -.5) {\tiny$m_1$};
  \node at (15, -.5) {\tiny$m'_4$};
  \node at (7, -.5) {\tiny$m'_3$};
  \node at (3, -.5) {\tiny$m'_2$};
  \node at (1, -.5) {\tiny$m'_1$};
\end{tikzpicture}
  \end{center}
  \caption{An example of the elements of $\supp(n)$, highlighting the elements $m_i$, $m'_j$ and a triple $(m, m', m'')$ from the proof of \cref{thm:lower}.}%
  \label{fig:binary_tree}
  \end{figure}
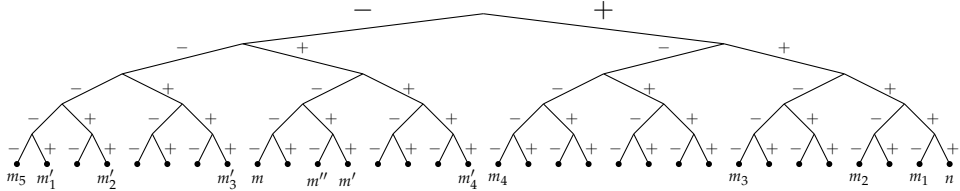

  The conditions of the inductive step are clearly met, and so the result holds for $m$.
\end{proof}

We now show the corresponding upper bound.

\begin{theorem}\label{thm:upper}
  The multiplicity of the trivial module as a composition factor of $S(n,m)$ is 1 if $m \in \supp(n)$ and is zero otherwise.
\end{theorem}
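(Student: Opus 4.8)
\section*{Proof plan for \texorpdfstring{\cref{thm:upper}}{the upper bound}}

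The plan is to pair the lower bound of \cref{thm:lower} with a matching upper bound, so two things remain to be shown: that the trivial multiplicity in $S(n,m)$ never exceeds $1$, and that it vanishes whenever $m$ lies in the same $D_\infty$-orbit as $n$ but $m+1\notin I_n$. When $\delta\neq\pm2$ the elements $m$ outside the orbit of $n$ are handled at once by \cref{thm:linkage}, since $F_n$ then acts on $S(n,m)$ by $\Delta_{m+1}\neq\Delta_{n+1}$ and so $D(n,n)$ cannot be a composition factor; the degenerate parameters $\delta=\pm2$ fall under the same scheme, the only difference being that linkage provides no initial cut (in characteristic zero every cell module is irreducible and $I_n=\{n+1\}$, in agreement with the statement).

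I would then run an induction on $n$, using \cref{cor:simple_mult_triv} as the motor: since $[S(n,m):L(n,r)]=[S(r,m):L(r,r)]$, once the theorem is known for all $TL_r$ with $r<n$ the only decomposition numbers of $TL_n$ still undetermined are the trivial multiplicities $d(n,m):=[S(n,m):L(n,n)]$. For $m\not\equiv_\ell-1$ these drop out of the branching rule $S(n,m){\downarrow}\cong S(n-1,m-1)\oplus S(n-1,m+1)$ together with \cref{prop:incremental_result_1}: the trivial $TL_n$-module restricts to the trivial $TL_{n-1}$-module, so $d(n,m)=d(n-1,m-1)+d(n-1,m+1)$ (with the $m\equiv_\ell-2$ correction), and one checks directly, from the $(\ell,p)$-digit description of $I_n$ and the reflection lemma for $k$-walls proved just before \cref{thm:lower}, that the right-hand side equals $[m+1\in I_n]$.

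The crux is $m\equiv_\ell-1$. Here I would argue with the candidate morphisms of \cref{sec:morphisms}: whenever the numerical hypotheses of \cref{prop:jwform} hold, $v_{n,m'}$ induces a genuine $TL_n$-morphism $S(n,n)\to S(n,m')$ whose image is the unique trivial submodule of $S(n,m')$ (unique by \cref{cor:single_trivial_submodule}); and the composition law $v_{a,b}\circ v_{b,c}=\gaussianquant{(a-c)/2}{(a-b)/2}\,v_{a,c}$, combined with the vanishing pattern of \cref{thm:lucas}, records precisely when chains of such morphisms stay nonzero. A second, downward induction on $m$ then takes over. If $m+1\in I_n$ one exhibits the trivial submodule of $S(n,m)$ through a chain of non-vanishing candidate morphisms reflecting $n$ down to $m$ across successive $k$-walls, exactly as in the proof of \cref{thm:lower}, and must argue that what is left of $S(n,m)$ carries no further trivial; if $m+1\notin I_n$ one shows that no such chain reaches $S(n,n)$, so that $\Hom_{TL_n}(S(n,n),S(n,m))$---which is at most one-dimensional by \cref{sec:morphism_unique}---vanishes, and moreover that the radical $R(n,m)$ of the cell form contains no trivial factor either. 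To close both horns and rule out the residual possibility of multiplicity $\ge2$ I would fall back on the global identity $\dim S(n,m)=\sum_r[S(n,m):L(n,r)]\dim L(n,r)$, the dimensions $\dim L(n,r)$ for $r<n$ being furnished by the induction.

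The main obstacle is exactly this upper bound in the wall case. Unlike the lower bound, which is witnessed by constructing morphisms, the absence of a trivial composition factor has to be deduced from a precise description of the submodule of $S(n,m)$ generated by the images of all candidate morphisms---equivalently, of the radical $R(n,m)$---tight enough to see that no trivial is hidden in a deeper Loewy layer. I expect this to force the reflection lemma for $k$-walls, the one-dimensionality of $\Hom$-spaces between standard modules, and the Lucas-type vanishing of \cref{thm:lucas} to be welded into a single induction that keeps track simultaneously of which $S(n,m')$ acquire a trivial submodule and which acquire one only as a subquotient.
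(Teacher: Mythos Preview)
Your off-wall case ($m\not\equiv_\ell-1$) is essentially the paper's argument, except that the displayed equality $d(n,m)=d(n-1,m-1)+d(n-1,m+1)$ is not an equality: restriction only gives $d(n,m)\le d(n-1,m-1)+d(n-1,m+1)$, because composition factors $L(n,r)$ with $r<n$ may also contain the trivial $TL_{n-1}$-module upon restriction. This is harmless there, since linkage for $TL_{n-1}$ forces at most one of $m,m+2$ into $I_{n-1}$, so the right-hand side is already $\le 1$ and you still close.

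The genuine gap is the wall case $m\equiv_\ell-1$. Your plan there rests on two devices that do not deliver an upper bound. First, the candidate-morphism analysis and the one-dimensionality of $\Hom$ control only the trivial \emph{submodule} of $S(n,m)$; they say nothing about a trivial factor buried in a higher Loewy layer, and you concede as much without proposing a mechanism to exclude it. Second, the dimension identity $\dim S(n,m)=\sum_r[S(n,m):L(n,r)]\dim L(n,r)$ is circular: knowing $TL_r$ for $r<n$ determines the multiplicities $[S(n,m):L(n,r)]$ for $r<n$, but $\dim L(n,m)$ itself depends on $d(n,m)$, so you have one equation in two unknowns. The paper resolves this by keeping the restriction inequality as the engine even on the wall, but running a \emph{second} induction on $n-m$: for each $r$ with $m<r<n$ and $m+1\in I_r$, the factor $L(n,r)$ already present in $S(n,m)$ contributes a trivial $TL_{n-1}$-factor to the restriction, and the count of such $r$ is matched against $|\{m,m+2\}\cap I_{n-1}|$ via the reflection symmetry of the sets $I_{n-1}^k$ about the points $a^k$. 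The one configuration this accounting cannot handle---when $m,m+2$ straddle a symmetry centre, i.e.\ $m+1=a^{k+1}$---is disposed of by \cref{cor:when_simple}, which shows $S(n,m)$ is outright simple there. You have neither the double induction with its symmetry bookkeeping nor the appeal to \cref{cor:when_simple}, and without them the wall case does not close.
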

\begin{proof}
  The lower bound has been proven in \cref{thm:lower} and so all that remains is the upper bound.
  We prove the bound (and hence the result) by induction on $n$ and $n-m$.
  Thus assume the result is known for all $S(n',m')$ for $n' < n$ and for $m' > m$ when $n'=n$.

  The first key observation is that if the trivial module appears as a composition factor of $S(n,m)$, it must appear with at least that multiplicity in the restriction $S(n,m)\restr$.
  This module has a filtration with factors $S(n-1, m-1)$ and $S(n-1,m+1)$ for which we know the multiplicity of the trivial module by induction.

  Now let us assume that $S(n,m)$ contains at least one trivial composition factor.
  By \cref{thm:linkage} we must have that $m + 1 \equiv_{2\ell} \pm(n+1)$.

  Suppose $m + 1 \equiv_{2\ell} n+1$ and $n+1 \not \equiv_\ell 0$.
  Then $m \equiv_{2\ell} n$ but $m + 2 \not \equiv_{2\ell} n$ and $m+2 \not \equiv_{2\ell} -n$.
  Thus only $S(n-1, m-1)$ has a trivial composition factor and it appears only once.
  Write $n = \sum_{j=0}^k n_j p^{(j)}$ and $m = \sum_{j\in R^+} n_j p^{(j)} - \sum_{j\in R^-} n_j p^{(j)}$.
  This is possible by the inductive assumption.
  Then $0 \in R^+$ as $m - n\equiv_{2\ell} 0$ and $0 \le n_0 < \ell-1$.
  Thus $m = (n_0 + 1) + \sum_{j \in R^+\setminus\{0\}} n_j p^{(j)} - \sum_{j\in R^-} n_j p^{(j)} -1 \in \supp(n)$.

  On the other hand, if $m + 1 \equiv_{2\ell} -(n+1)$ and $n+1 \not \equiv_\ell 0$, then $m + 2 \equiv_{2\ell} n $ but $m \not \equiv_{2\ell} n$ and $m \not \equiv_{2\ell}-n$ and so should the trivial must be a factor of $S(n-1, m+1)$.
  The argument runs similarly to the above to show that the multiplicity is at most one and that $m \in \supp(n)$.

  Hence we have proved the result for $\ell \nmid n+1$.
  Henceforth we may assume that $\ell \mid n+1$ and so $\ell \mid m+1$.

  Write $n = \sum_{j = o}^k {n}_j p^{(j)}$ and $n+1 = \sum_{j = 1}^k \tilde{n}_j p^{(j)}$.
  Then by restriction to $n-1$ and the range of $\supp(n-1)$ we see $m+1 \ge \tilde{n}_k p^{(k)} - \sum_{j = 1}^{k-1} \tilde{n}_j p^{(j)} = \min\supp(n)$.
  Let
  \begin{equation}
    \supp(n-1) = I_{n-1}^i \supset I_{n-1}^{i-1} \supset \cdots \supset I_{n-1}^0 = \{n-1\}
  \end{equation}
  be such that
  \begin{equation}
    I_{n-1}^{j} = \{m \in \supp(n-1) \;:\; \epsilon^m_t = + \;\;\forall j \le t \le i\},
  \end{equation}
  and $a^j = \sum_{r = j}^i n_r p^{(r)}-1$ so that the set $I_n^j$ is symmetric about $a^j$.

\vspace{1em}
\begin{center}
    \begin{tikzpicture}[scale=1, transform shape]
    \draw (-.5,0) -- (12.3,0);

    \fill (0,0) circle (0.05);
    \fill (.3,0) circle (0.05);

    \fill (.9,0) circle (0.05);
    \fill (1.2,0) circle (0.05);

    \fill (2.7,0) circle (0.05);
    \fill (3,0) circle (0.05);

    \fill (3.6,0) circle (0.05);
    \fill (3.9,0) circle (0.05);

    \fill (8.1,0) circle (0.05);
    \fill (8.4,0) circle (0.05);

    \fill (9,0) circle (0.05);
    \fill (9.3,0) circle (0.05);

    \fill (10.8,0) circle (0.05);
    \fill (11.1,0) circle (0.05);

    \fill (11.7,0) circle (0.05);
    \fill (12,0) circle (0.05);

    \draw [decorate,decoration={brace,amplitude=1}] (12.1,-0.2) -- (11.9,-0.2)
node [midway,yshift=-9]
{\small$I_{n}^0$};
    \draw [decorate,decoration={brace,amplitude=2}] (12.1,-0.8) -- (11.6,-0.8)
node [midway,yshift=-9]
{\small$I_{n}^1$};
    \draw [decorate,decoration={brace,amplitude=3}] (12.1,-1.4) -- (10.7,-1.4)
node [midway,yshift=-12]
{\small$I_{n}^2$};
    \draw [decorate,decoration={brace,amplitude=4}] (12.1,-2.0) -- (8.0,-2.0)
node [midway,yshift=-12]
{\small$I_{n}^3$};
    \draw [decorate,decoration={brace,amplitude=5}] (12.1,-2.6) -- (-.1,-2.6)
node [midway,yshift=-12]
{\small$I_{n}^4$};

    \draw (6,-0.2) -- (6,0.2);
    \node at (6, 0.3) {\tiny$a^4$};

    \draw (10.05,-0.2) -- (10.05,0.2);
    \node at (10.05, 0.3) {\tiny$a^3$};

    \draw (11.4,-0.2) -- (11.4,0.2);
    \node at (11.4, 0.3) {\tiny$a^2$};

    \draw (11.85,-0.2) -- (11.85,0.2);
    \node at (11.85, 0.3) {\tiny$a^1$};
  \end{tikzpicture}
\end{center}
\vspace{1em}

  We know that $U = \{m-1, m+1\} \cap \supp(n-1)$ is nonempty.
  Suppose that $\{m-1, m+1\} \cap I_n^j$ is either empty or $U$ for each $j$ and let $k$ be the maximum such that the intersection with $I_n^k$ is nonempty.
  Let $m'$ be the reflection of $m$ about $a^k$.
  Then for every $n > n' \ge m'$, we have $m' \in \supp(n') \Rightarrow m \in \supp(n')$.
  As a result, each composition factor, $D(n, n')$ for $m' \le n' < n$ of $S(n,m')$ occurs as a composition factor of $S(n, m)$.

  Further, $|\{m-1, m+1\} \cap \supp(n-1)| = |\{m'-1, m'+1\} \cap \supp(n-1)|$ by the symmetry of $I^k_{n-1}$ and hence every simple composition factor of $S(n,m')$ which upon restriction to $\TL_{n-1}$ has a trivial composition factor contributes also to the restriction of $S(n,m)$.
  We now only have two cases.
  If the trivial module is not a composition factor of $S(n,m')$, then we have accounted for all trivial factors of $S(n,m)\restr$ and so the trivial module is not a factor of $S(n,m')$.
  If the trivial module is a composition factor of $S(n,m')$ so that $m' \in \supp(n)$, then certainly $m \in \supp(n)$ and the multiplicity of the trivial module is bounded above by one.
  This ``missing multiplicity'' may be taken up by one of the factors of $S(n,m)$ of the form $D(n,j)$ for $m\le j < m'$, but by the argument in \cref{thm:lower}, we see that indeed it is a copy of the trivial.

  Finally, we must account for if $\{m-1, m+1\} \cap I_n^k \not \in \{U, \emptyset\}$ for some $k$.
  If this is the case, clearly $\{m-1, m+1\} \subseteq \supp(n)$ and $\{m-1, m+1\} \cap I_n^k = \{m+1\}$ and $m = a^{k+1}$.
  But then \cref{cor:when_simple} applies and $S(n,m)$ has no trivial composition factors as it is simple.
\end{proof}

We have thus proven our main theorem
\begin{theorem}\label{thm:main}
  The module $D(n,r)$ is a composition factor of $S(n,m)$ iff $m\in \supp(r)$, in which case it appears with multiplicity exactly one.
\end{theorem}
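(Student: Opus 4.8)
The plan is to obtain \Cref{thm:main} as a formal consequence of the truncation reduction of \Cref{sec:truncation} together with the two-sided multiplicity bound just established. First I would invoke \Cref{cor:simple_mult_triv}, which collapses an arbitrary decomposition number into the multiplicity of a single distinguished simple module:
\[
  [S(n,m) : D(n,r)] = [S(r,m) : L(r,r)].
\]
So the whole problem reduces to computing $[S(r,m) : D(r,r)]$.

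The next step is to identify $D(r,r)$ with the trivial $TL_r$-module. The cell module $S(r,r)$ is spanned by the unique monic $(r,r)$-diagram, namely $\id_r$, so it is one-dimensional; its standard form takes the value $\langle \id_r, \id_r \rangle = 1$ and is nondegenerate (consistently with \Cref{lem:form_nondegenerate}, since here $m = r$), whence $R(r,r) = 0$ and $D(r,r) = S(r,r)$ is precisely the one-dimensional trivial module. In particular $r$ always lies in the corresponding $\Lambda_0$ for $TL_r$, so $D(r,r)$ is genuinely defined. Thus $[S(n,m) : D(n,r)]$ equals the composition multiplicity of the trivial module in $S(r,m)$.

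Finally I would apply \Cref{thm:upper} with $n$ replaced by $r$: this multiplicity is $1$ when $m+1 \in I_r$ and $0$ otherwise, which is exactly the assertion of the theorem. It remains only to observe the consistency of the degenerate range $S(r,m) = 0$ (that is, $m > r$ or $m \not\equiv_2 r$) with the condition $m+1 \notin I_r$; this is immediate from the shape of $I_r$, since the all-positive sign choice gives $\max I_r = r+1$, any two sign choices differ by an even integer so all of $I_r$ shares the parity of $r+1$, and $\min I_r = m_i+1 \ge 1$ because $\sum_{j<i} n_j p^{(j)} < p^{(i)} \le n_i p^{(i)}$.

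I do not anticipate any genuine obstacle here. All the substantive work has already been carried out --- the lower bound in \Cref{thm:lower} via the candidate morphisms $v_{n,m}$ and their compositions, and the upper bound in \Cref{thm:upper} via restriction to $TL_{n-1}$ together with the simplicity criterion \Cref{cor:when_simple}. The present statement is purely the synthesis of these two results through the truncation functor, and the only care needed is the bookkeeping identifying $D(r,r)$ as the trivial module.
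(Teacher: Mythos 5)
Your proposal is correct and matches the paper's own proof, which likewise just combines \cref{cor:simple_mult_triv} with \cref{thm:upper}; the extra bookkeeping you supply (identifying $D(r,r)$ with the trivial module and checking the degenerate range) is sound and merely makes explicit what the paper leaves implicit.
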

\begin{proof}
  Simply combine \cref{thm:upper} with \cref{cor:simple_mult_triv}.
\end{proof}

The majesty of \cref{thm:main} is that if $p = \infty$, so that we are in a characteristic zero field we see that $\supp(n) = \{n\}$ if $n\equiv_{\ell} -1$ and $\{n,n'\}$ where $n'$ and $n$ are reflections about the highest $\ell$-wall less than $n$ otherwise.
By comparison with the results of~\cite{ridout_saint_aubin_2014}, we see that \cref{thm:main} still holds.
If further, $\ell = \infty$ so the parameter $q$ is not a root of unity (or that $\delta$ never satisfies a quantum number), $\supp(n) = {n}$ and we recover the semi-simple case.

\section{Dimensions of Simple Modules}\label{sec:dimensions}
Recall \cref{prop:incremental_result_1}.
We are now able to fill in the remaining case of $m \equiv_\ell -1$.
\begin{lemma}\label{lem:dim_mod_m}
  If $m \equiv_\ell -1$ then
  \begin{equation}
    \dim D(n,m) = \dim S(n,m) - \sum_{\substack{m' > m \\ m \in \supp(m')}} \dim D(n,m')
  \end{equation}
\end{lemma}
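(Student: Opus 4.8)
The plan is to read the identity straight off the decomposition numbers of \cref{thm:main}, using only that the dimensions of the Jordan--Hölder factors of a finite-dimensional module sum to its dimension.

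First I would write, for the module $S(n,m)$ over $TL^R_n(\delta)$,
\begin{equation*}
  \dim S(n,m) = \sum_{m'}[S(n,m):D(n,m')]\dim D(n,m'),
\end{equation*}
the sum being over all $m'$ for which $D(n,m')$ is defined. By \cref{thm:main} each multiplicity $[S(n,m):D(n,m')]$ equals $1$ when $m+1\in I_{m'}$ and $0$ otherwise, so only the indices $m'$ with $m+1\in I_{m'}$ survive. Among these, $m'=m$ always occurs: taking every sign positive in the defining expression for $I_m$ returns $m+1$, so $m+1\in I_m$; concretely this just records that $D(n,m)$ is the cosocle of $S(n,m)$ (\cref{thm:cell_reps}), which makes sense because $m\equiv_\ell -1$ together with $\ell\ge 2$ (as $[1]=1\neq 0$) forces $m>0$, hence $m\in\Lambda_0$ by \cref{lem:form_nondegenerate}.

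Next I would observe that any composition factor $D(n,m')$ of $S(n,m)$ with $m'\neq m$ has $m'>m$: this is the uni-triangularity of the decomposition matrix (\cref{thm:gg}), and when $\delta\neq\pm 2$ it is also the linkage statement of \cref{thm:linkage}. Pulling the $m'=m$ term out of the sum then gives
\begin{equation*}
  \dim S(n,m) = \dim D(n,m) + \sum_{\substack{m'>m\\ m+1\in I_{m'}}}\dim D(n,m'),
\end{equation*}
and rearranging is exactly the asserted formula.

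I do not anticipate any genuine obstacle: all of the arithmetic content already sits in \cref{thm:main}, and the role of this lemma is merely to repackage it as a recursion for $\dim D(n,m)$ --- the remaining inputs being $\dim S(n,m)=\binom{n}{r}-\binom{n}{r-1}$ with $r=(n-m)/2$ from \cref{lem:diagram-tableaux}, and the values $\dim D(n,m')$ for $m'>m$, which are available by descending induction on $n-m$, the cases $m'\not\equiv_\ell -1$ being supplied by \cref{prop:incremental_result_1}. The hypothesis $m\equiv_\ell -1$ is stated only because this is precisely the case \cref{prop:incremental_result_1} leaves open; the displayed identity in fact holds for every $m$.
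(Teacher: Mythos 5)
Your proof is correct and is exactly the argument the paper intends (the lemma is stated as an immediate consequence of \cref{thm:main}, with no separate proof supplied): sum the dimensions of the composition factors of $S(n,m)$, use \cref{thm:main} to see each multiplicity is $1$ precisely when $m+1\in I_{m'}$, isolate the cosocle term $m'=m$, and rearrange, with the remaining factors satisfying $m'>m$ by uni-triangularity. One small remark: your indexing condition $m+1\in I_{m'}$ is the one consistent with \cref{thm:main}, whereas the lemma as printed writes $m\in I_{m'}$, which appears to be an off-by-one slip in the paper rather than in your argument.
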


This gives a recursive algorithm for the dimensions of all the simple modules of $\TL_n$ with any parameter over any field.
Compare \cref{fig:dimensions} which was computed with \cref{prop:incremental_result_1} and \cref{lem:dim_mod_m} with Table 7 in~\cite{andersen_2019}.

\begin{figure}[htpb]
  \centering\small%
  \begin{tabular}{cccccccccccc}
 $n\setminus m$          & 0   & 1   & 2    & 3    & 4    & 5    & 6    & 7   & 8    & 9   & 10 \\
    \hline
 0         & 1\\
 1         &     & 1\\
 2         & 1   &     & 1\\
 3         &     & 1   &      & 1\\
 4         & 1   &     & 3    &      & 1\\
 5         &     & 1   &      & 4    &      & 1\\
 6         & 1   &     & 9    &      & 4    &      & 1\\
 7         &     & 1   &      & 13   &      & 6    &      & 1\\
 8         & 1   &     & 27   &      & 13   &      & 7    &     & 1\\
 9         &     & 1   &      & 40   &      & 27   &      & 7   &      & 1\\
10         & 1   &     & 81   &      & 40   &      & 34   &     & 9    &     & 1\\
  \end{tabular}
  \caption{Dimensions of Simple Modules for $\ell = 3$ and $p=2$.}%
\label{fig:dimensions}%
\end{figure}

However, the formula above is recursive in nature.
Finding a closed formula is equivalent to inverting the decomposition matrix described in \cref{fig:in}:
\begin{equation}
  d_{n,m} = \begin{cases}
    1 & m \in \supp(n)\\
    0 & \text{else}
  \end{cases}
\end{equation}

Recall the definition of the relation $\triangleright$ from \cref{subsec:comb}.
We will say that $x \,\dot{\triangleright}\, y$ if $x \triangleright y$, $\nu_{(p)}(x) = \nu_{(p)}(y)$ and the $\nu_{(p)}(x)$-th digits of both $x$ and $y$ agree.  Here $\nu_{(p)}(x) = 0$ if $\ell \nmid x$ and $\nu_p(x/ \ell)+1$ otherwise.

Let
\begin{equation}\label{eq:enm}
  \tilde{e}_{n,m} =
  \begin{cases}
    0 & n \not \equiv_2 m \\
    -1 & \nu_{(p)}\left( \frac{n+m}{2} \right) > \nu_{(p)}(m) \text{ and } m \triangleleft \frac{n+m}{2} - 1\\
    1 & \nu_{(p)}\left( \frac{n+m}{2} \right) = \nu_{(p)}(m) \text{ and } m \,\dot{\triangleleft}\, \frac{n+m}{2}\\
    0 & \text{else}
  \end{cases}
\end{equation}

\begin{theorem}
  The matrix $(\tilde{e}_{n,m})$ is inverse to $(\tilde{d}_{n,m})$ where $\tilde{d}_{n,m}$ is 1 if $m \in \supp(n-1)$ and 0 else.
\end{theorem}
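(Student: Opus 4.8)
The plan is to use unitriangularity to reduce the statement to a single identity, and then to verify that identity by an induction that peels off one $(\ell,p)$-digit of $n$ at a time, mirroring the recursive structure of the sets $I_n$.

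First I would record that, with respect to the natural order on the naturals of a fixed parity, both matrices are lower unitriangular. For $\tilde d$ this is immediate: $\tilde d_{n,m}\neq 0$ forces $m\in I_{n-1}$, and writing $n=\sum_{j=0}^i n_jp^{(j)}$ for the $(\ell,p)$-digits of $n$ one has $I_{n-1}=\{\sum_j\epsilon_jn_jp^{(j)}:\epsilon_j\in\{+,-\},\ \epsilon_i=+\}$, every element of which is congruent to $n$ modulo $2$ and lies in the interval $[\,n_ip^{(i)}-(p^{(i)}-1),\ n\,]$, the upper endpoint $m=n$ being realised by the all-plus choice so that $\tilde d_{n,n}=1$. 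For $\tilde e$ it follows by inspection of \eqref{eq:enm}: each of the two non-zero cases forces $m\equiv_2 n$ and $m\le n$, and at $m=n$ the second case applies with $\tilde e_{n,n}=1$. Hence $\tilde d$ admits a unique two-sided inverse, itself lower unitriangular, and it suffices to prove
\begin{equation}
  \sum_{k\in I_{n-1}}\tilde e_{k,m}=\delta_{n,m}\qquad\text{whenever }m\equiv_2 n\text{ and }1\le m\le n.
\end{equation}

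The engine of the induction is the following decomposition. If $t$ is the least index with $n_t\neq 0$ and $t<i$, put $a=n_tp^{(t)}$; then, by the wall-reflection symmetry already exploited in the proof of \cref{thm:lower}, $I_{n-1}=\bigl(I_{(n-a)-1}+a\bigr)\sqcup\bigl(I_{(n-a)-1}-a\bigr)$, the union being disjoint because $a<p^{(t+1)}$ whereas the gaps inside $I_{(n-a)-1}$ are at least $2p^{(t+1)}$. Substituting this into the sum and invoking the inductive hypothesis for $n-a<n$ reduces matters to a local identity among $\tilde e_{v+a,m}$, $\tilde e_{v-a,m}$ and $\tilde e_{v,m}$ for $v\in I_{(n-a)-1}$, together with the boundary term at $v=n-a$. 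This is exactly where the shape of \eqref{eq:enm} enters: the requirement $m\,\dot{\triangleleft}\,\tfrac{n+m}{2}$, demanding both equal $p$-valuation and equal $\nu_{(p)}$-th digit, prevents the contribution coming from the scale $p^{(t)}$ from interacting with the contributions at other scales, so that the two shifted copies $v\pm a$ contribute with opposite signs — via the ``$-1$'' case $m\triangleleft\tfrac{n+m}{2}-1$ against the ``$+1$'' case — except on the single residue class where they combine to recover $\tilde e_{v,m}$ (and to produce $\delta_{n,m}$ at $v=n-a$). The base cases are those $n$ for which $I_{n-1}$ is a singleton, i.e.\ $n=n_ip^{(i)}$: then $\tilde d_{n,m}=\delta_{n,m}$ and one checks directly from \eqref{eq:enm} that $\tilde e_{n,m}=\delta_{n,m}$ for all $m$, the case $n<\ell$ being trivial since there every $p$-valuation in sight vanishes and $\dot{\triangleleft}$ degenerates to equality. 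One could alternatively run the induction through the restriction rule $S(n,m)\!\downarrow\ \cong S(n-1,m-1)\oplus S(n-1,m+1)$ of \cref{prop:incremental_result_1} together with \cref{lem:dim_mod_m} (which fixes the columns $m\equiv_\ell-1$), but the combinatorial bookkeeping is the same.

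The main obstacle is precisely that local identity: one must determine, for fixed $m$ and for $k$ running over $I_{n-1}$, exactly which $k$ make $\tilde e_{k,m}$ non-zero and with which sign, and this hinges on a careful comparison of the $(\ell,p)$-digits of $m$, of $\tfrac{k+m}{2}$ and of $\tfrac{k+m}{2}-1$, and on the relative sizes of $\nu_{(p)}(m)$ and $\nu_{(p)}\!\bigl(\tfrac{k+m}{2}\bigr)$, including the carries produced by reflecting at the walls. Once it is verified that the ``$-1$'' and ``$+1$'' cases of \eqref{eq:enm} partition the contributing indices in just the way the telescoping requires — a statement provable by the same $(\ell,p)$-digit analysis, with \cref{thm:lucas} keeping track of which Gaussian binomials vanish — the remainder is the formal unitriangular reduction above.
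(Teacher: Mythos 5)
Your reduction is the right one and matches the paper's: both matrices are unitriangular, so it suffices to prove the single identity $\sum_{k\in I_{n-1}}\tilde e_{k,m}=\delta_{n,m}$. But from that point on there is a genuine gap. The entire difficulty of the theorem lives in the ``local identity'' you defer to the last paragraph, and you never establish it; you only assert that a careful $(\ell,p)$-digit comparison will show that the $+1$ and $-1$ cases of \cref{eq:enm} pair off as the telescoping requires. That digit comparison \emph{is} the proof --- in the paper it occupies essentially the whole argument --- so as written the proposal proves only the easy formal shell.

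Moreover, the specific induction you choose (peeling off the \emph{least} significant nonzero digit $a=n_tp^{(t)}$ and writing $I_{n-1}=(I_{(n-a)-1}+a)\sqcup(I_{(n-a)-1}-a)$) runs into a structural obstacle that the paper deliberately avoids. The conditions in \cref{eq:enm} are anchored at the low-order digits: they compare $\nu_{(p)}(m)$ with $\nu_{(p)}\bigl(\tfrac{k+m}{2}\bigr)$ and the digit at that valuation. Shifting $k$ by $\pm a$ at the bottom changes these valuations and introduces borrows, so there is no clean relation expressing $\tilde e_{v+a,m}+\tilde e_{v-a,m}$ in terms of $\tilde e_{v,\mu}$, and your appeal to the inductive hypothesis for $n-a$ does not close. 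The paper instead peels off the \emph{most} significant digit $n_Ip^{(I)}$: there one has genuine shift-invariance $\tilde e_{k,m}=\tilde e_{k-n_Ip^{(I)},\,m-n_Ip^{(I)}}$ for $m\ge n_Ip^{(I)}$ (removing the top digit does not disturb the valuations), together with the antisymmetry $\tilde e_{k,m}=-\tilde e_{k',m}$ for $m<n_Ip^{(I)}$, where $k'$ is the total flip of $k$ about the top wall; these two facts are what the paper verifies digit by digit, and they make the sum collapse. To repair your argument you should either switch to the top-digit decomposition and prove those two facts, or else state and prove precisely the bottom-digit identity you need --- at present it is not even clear that it holds in a form that telescopes.
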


\begin{proof}
  It will suffice to show that $\sum_{m \in \supp(n-1)} e_{m,\tilde{n}} = \delta_{\tilde{n},n}$.
  We can thus leverage the structure of $\supp(n-1)$ to show the result.

  We first make the claim that if $n = \sum_{i =0}^I n_{i} p^{(i)}$ and $n_I \neq 0$, then for any $m < n_I p^{(I)}$, $e_{n,m} = - e_{n',m}$ where $n' = n_I p^{(I)} - \sum_{i=0}^{I-1} n_i p^{(i)}$.
  If so, note that for $n_{I}p^{(I)} \le m \le n$, we have that $e_{n,m} = e_{n-n_{I}p^{(I)},m-n_{I}p^{(I)}}$.
  Indeed, $m \triangleleft \frac{n+m}{2}-1 \Rightarrow m- n_{I}p^{(I)} \triangleleft \frac{n+m}{2}-1 - n_{I}p^{(I)}$ as we are simply removing the $I$-th digit on each side, and similarly for $m \,\dot{\triangleleft}\, \frac{n+m}{2}$.
  Since $\nu_{(p)}(m) \le I$, subtracting $n_{I}p^{(I)}$ maintains the valuation on both sides, and so we see the conditions of \cref{eq:enm} are maintained.
  The result then follows by a simple induction on $I$.

  It thus suffices to prove the claim.
  Suppose $e_{n,m} = 1$ and let $j = \nu_{(p)}(m) = \nu_{(p)}( \frac{n+m}{2})$.
  Then $\frac{n+m}{2} = x p^{(j)} + \sum_{i = j+1}^{I} x_i p^{(i)}$ and $m = x p^{(j)} + \sum_{i = j+1}^{I} y_i p^{(i)}$ for some digits $x_i \le y_i$.
  In this case, $\frac{n-m}{2} = \sum_{i=j+1}^{I}(x_i - y_i) p^{(i)}$ is a $(p,\ell)$-digit expansion and $n = xp^{(j)} + \sum_{i=j+1}^I (2x_i - y_i) p^{(i)}$.
  If so, $\frac{n-n'}{2}$ has valuation $j$ and least non-zero significant digit $x$ so $\frac{n'+m}{2} = \frac{n+m}{2}- \frac{n-n'}{2}$ has valuation at least $j$, but zero $j+1$-th digit so the valuation is strictly greater than $j$.
  Further, $\frac{n'+m}{2} = \sum_{i = j+1}^I(y_i-x_i) p^{(i)}$ and since $x_i \le p-1$ for each $i$ (except for $i=0$ where it is bounded by $\ell-1$), when writing out $\frac{n'+m}{2}-1$, we see that the $i$-th digit is larger (or equal to if $x_i = p-1$ and there is a carry) than $y_i$ so the dominance condition holds and $e_{n',m} = -1$.

  If on the other hand $e_{n',m} = -1$, let $\nu_{(p)}\left( \frac{n+m}{2}\right) = j$ and $\nu_{(p)}\left( m\right) = k < j$.
  Then write  $\frac{n+m}{2} = \sum_{i = j}^I x_i p^{(i)}$ and $m = \sum_{i=k}^I y_i p^{(i)}$.
  In this case $\frac{n-m}{2} = \sum_{i=k}^{j-1} (-y_i) p^{(i)} + \sum_{i=j}^I (x_i - y_i) p^{(i)}$ and so
  $n = \sum_{i=k}^{j-1}(-y_i)p^{(i)} + \sum_{i=j}^I(2x_i - y_i)p^{(i)}$.
  Thus we may write
  $\frac{n-n'}{2} = \sum_{i=k}^{j-1} (-y_i) p^{(i)} + \sum_{i=j}^I (2x_i - y_i) p^{(i)} - n_I p^{(I)}$ and so
  $\frac{n'-m}{2} = \sum_{i=k}^{j-1} y_i p^{(i)} + \sum_{i=j}^I (y_i - x_i) p^{(i)} + n_I p^{(I)}$.  It is clear that
$\nu_{(p)}(m) = k = \nu_{(p)}\left( \frac{n+m}{2} \right)$ and that the $k$-th through to $(j-1)$-st digits agree.
  It is hence sufficient to show now that $\sum_{i=j}^I y_i p^{(i)} \,\triangleleft\, \sum_{i=j}^I (y_i - x_i)p^{(i)} + n_I p^{(I)}$.
  Recall $m \,\triangleleft\, \frac{n+m}{2}-1$ so $\sum_{i=j}^I y_i p^{(i)} \,\triangleleft\, \sum_{i=j}^I x_i p^{(i)}$ since $j<k$.
  Hence $x_i \ge y_i$ and each digit of $\sum_{i=j}^I (y_i - x_i)p^{(i)} + n_I p^{(I)}$ is at least the corresponding digit of $m$ as desired.
\end{proof}

This directly leads to a closed form of the dimensions of the simple modules for all Temperley--Lieb algebras defined over fields.
\begin{corollary}\label{cor:dim_simple}
  The dimensions of the simple modules for $\TL_n^R(\delta)$ are given by
  \begin{equation}
  \dim D(n,m) = \sum_{r=0}^{\frac{n-m}{2}} \tilde{e}_{n-2r+1,m+1}\left({\binom{n}{r}} - \binom{n}{r-1}\right)
  \end{equation}
\end{corollary}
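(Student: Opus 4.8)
The plan is to obtain the closed form by inverting the decomposition matrix, and all the ingredients for this are now in hand: \cref{thm:main} records the decomposition numbers, the theorem just proved inverts the associated combinatorial matrix, and \cref{lem:diagram-tableaux} supplies $\dim S(n,m)$.

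Fix $n$ and let $r$ and $m$ range over $\{i : 0\le i\le n,\ i\equiv_2 n\}$. By \cref{thm:main} the Jordan--Hölder multiplicity $[S(n,m):D(n,r)]$ is $1$ when $m+1\in I_r$ and $0$ otherwise; since $\max I_r = r+1$ this forces $m\le r$, so the decomposition matrix $\bigl([S(n,m):D(n,r)]\bigr)_{m,r}$ is upper uni-triangular (cf.\ \cref{thm:gg}) and, over this finite index set, invertible. Comparing with the definition $\tilde d_{n,m}=[\,m\in I_{n-1}\,]$ one reads off $[S(n,m):D(n,r)] = \tilde d_{r+1,m+1}$, so the decomposition matrix is a shift-by-one of the transpose of $(\tilde d_{n,m})$. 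I would then expand $\dim S(n,m)=\sum_r [S(n,m):D(n,r)]\,\dim D(n,r)$ and invert, using that $(\tilde e_{n,m})$ is the inverse of $(\tilde d_{n,m})$ by the preceding theorem, to reach
\begin{equation*}
  \dim D(n,r)=\sum_m \tilde e_{m+1,\,r+1}\,\dim S(n,m).
\end{equation*}

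Finally, substituting $\dim S(n,m)=\binom{n}{(n-m)/2}-\binom{n}{(n-m)/2-1}$ from \cref{lem:diagram-tableaux} and re-indexing the sum by $r'=(n-m)/2$ produces the claimed formula; since $\tilde e_{m+1,r+1}$ vanishes unless $m\ge r$ (lower-triangularity of $(\tilde e_{n,m})$, visible from \eqref{eq:enm}), the variable $r'$ runs exactly over $0\le r'\le (n-r)/2$, matching the stated range after renaming. I expect the only real work to be bookkeeping: keeping the index shift between ``$I_n$'' and ``$\tilde d_{n+1,\cdot}$'' consistent, noting that the one-sided inverse established in the preceding theorem suffices here because both matrices are triangular with unit diagonal (and finite for fixed $n$), and handling the degenerate case $\delta=0$ with $n$ even, where $D(n,0)=0$ by \cref{lem:form_nondegenerate}: with the convention $\dim D(n,0)=0$ everything above goes through unchanged, since $D(n,0)$ can occur as a composition factor only in $S(n,0)$.
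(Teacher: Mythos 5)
Your proposal is correct and is exactly the argument the paper intends: the corollary is stated as a direct consequence of inverting the (unitriangular) decomposition matrix via the preceding theorem and substituting $\dim S(n,m)=\binom{n}{r}-\binom{n}{r-1}$ from \cref{lem:diagram-tableaux}, and your index bookkeeping ($[S(n,m):D(n,r)]=\tilde d_{r+1,m+1}$, transposition, and the reindexing $m'=n-2r$) checks out. Your explicit attention to the one-sidedness of the inverse and to the degenerate case $\delta=0$, $m=0$ is more careful than the paper, which passes over both points in silence.
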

Note that this formula applies even in the semisimple case when $\delta$ is not a quantum root.
Here $\ell = \infty$ so that $(\tilde{e})$ is the identity matrix.

Further, this allows us to bound the dimensions from below in certain cases.
Naturally, if $\ell = \infty$ then we are in a semi-simple case and the simple module dimensions are exactly given by the cell module dimensions.
Thus, we may assume $\ell < \infty$ for the remainder of this section.

It will be useful to let $m_n$ be $\sqrt{n+2}-2$.
This value is chosen so that $\dim S(n,m) \ge \dim S(n,m-2)$ when $m \ge m_n$ and vice versa.

If $p = \infty$ then we know the structure of the cell modules exactly and can use this to give a lower bound on their dimension.
\begin{proposition}\label{prop:dim_bound_1}
If $p = \infty$, $2\ell-1 \le m$ and $r = (n-m)/2$,

\begin{equation*}
    \dim D(n,m) \ge \binom{n}{r}\frac{1}{(n-r+1)(n-r+2)}.
\end{equation*}
\end{proposition}
\begin{proof}
If $\ell \mid m + 1$ or $n - m \le \ell$ then $S(n,m)$ is irreducible and so 
\begin{align*}
  \dim D(n,m) = \dim S(n,m) &= \binom{n}{r} - \binom{n}{r-1} \\&= \binom{n}{r}\frac{n-2r+1}{n-r+1}\\&\ge\binom{n}{r}\frac{1}{(n-r+1)(n-r+2)}.
\end{align*}
Otherwise, it has two factors, $D(n,m)$ and $D(n,m')$ where if $m + 1 = [m_1, m_0]_{\ell, \infty}$ then $m' +1 = [m_1+1, \ell-m_0]_{\ell, \infty}$.  Note $m' \ge m +2$.

If, further, $m > m_n$,
\begin{align*}
  \dim D(n,m) &=\dim S(n,m) - \dim D(n,m')\\&\ge \dim S(n,m) - \dim S(n,m') \\&\ge \dim S(n,m) - \dim S(n,m+2)\\& = \frac{n!}{r!(n - r + 2)!}\big(n - (m+2)^2 - 2\big) \\&\ge \binom{n}{r}\frac{1}{(n-r+1)(n-r+2)}
\end{align*}

If, on the other hand, $2\ell \le m < m_n+4$ then let $m' + 1 = [m_1 - 1, \ell - m_0]_{\ell, \infty}$ and $m'' = m - 2\ell$.
Then by the structure of $S(n,m_1)$, we have the bound
\begin{align*}
    \dim S(n,m') &= \dim D(n,m') + \dim D(n,m) \\
                  &\le \dim S(n,m'') + \dim D(n,m)
\end{align*}
and so,
\begin{align*}
    \dim D(n,m) &\ge \dim S(n,m') - \dim S(n,m'') \\
                &\ge \dim S(n,m') - \dim S(n,m'-2) \\
                &= \frac{n!}{(r'+1)!(n - (r'+1) + 2)!}\big((m'-2)^2 - n + 2\big)
\end{align*}
where $m' = n-2r'$.
Notice that the bounds on $m$ ensure that $m'-2 < m_n$ and so the term $((m'-2)^2 - n + 2)$ is at least 1.  Hence, since $\lfloor n/2 \rfloor \ge r' \ge r + 1$
\begin{align*}
    \dim D(n,m) &\ge \binom{n}{r'+1}\frac{1}{(n-r'+1)(n-r'+2)}\\
     &\ge \binom{n}{r}\frac{1}{(n-r+1)(n-r+2)}.
\end{align*}
\end{proof}

We note that the condition $2\ell - 2 \le m$ is necessary. Indeed, if $\ell = 3$ so that $\delta = 1$ and $m = 1$ or $0$ depending on the parity of $n$, then $D(n,m)$ is given by the other one dimensional simple representation arising from the Temperley-Lieb monoid.
This is the representation where all diagrams act as unity.
It is clear that this dimension is constant over all $n$ and so does not obey an exponential bound.

In the modular case we have to use \Cref{cor:dim_simple}. We can rephrase it as
\begin{equation}\label{eq:dimension}
    \dim D(n,m) = \sum_{r = 0}^{\frac{n-m}{2}} e_{m + 2r+1, m+1} \dim S(n, m+2r)
\end{equation}
where
\begin{equation}
    e_{m+2r,m} =
  \begin{cases}
    -1 & \nu_{(p)}\left( m+r \right) > \nu_{(p)}(m) \text{ and } m \triangleleft m+r-1\\
    1 & \nu_{(p)}\left( m+r \right) = \nu_{(p)}(m) \text{ and } m \,\dot{\triangleleft}\, m+r\\
    0 & \text{else}
  \end{cases}
\end{equation}

This allows us to state a bound on the modular dimensions too.
\begin{proposition}\label{prop:dim_bound_2}
If $m \ge m_n$ and $r = (n-m)/2$, then
\begin{equation*}
    \dim D(n,m) \ge \binom{n}{r}\frac{1}{(n-r+1)(n-r+2)}.
\end{equation*}
\end{proposition}
\begin{proof}
Suppose that $e_{m+2r} = 1$ and write
\begin{align*}
    m &= [ m_k, \ldots, m_{s+1}, m_s, 0,\ldots, 0]_{\ell,p}\\
    m + r &= [ a_k, \ldots, a_{s+1}, m_s,0,\ldots, 0]_{\ell,p}
\end{align*}
where each $a_i \ge m_i$.  Then $r = [a_k-m_k, \ldots, a_{s+1}-m_{s+1},0,\ldots, 0]$ and 
\begin{equation*}
    m+2r = [2a_k - m_k, \ldots, 2a_{s+1}-m_{s+1}, m_s, 0, \ldots, 0]
\end{equation*}
so in particular, the reflection of $m+2r$ about the wall above is $m + 2r + 2t$ for $t = (p-m_s)p^{(s)}$.  If we write this as $m + 2r'$ then 
\begin{align*}
m + r' &= m + r + t\\ &= [a_k, \ldots, a_{s+1}, p-m_s+m_s,0,\ldots, 0]_{\ell, p}\\ &= [a_k, \ldots, a_{s+1}+1, 0,\ldots, 0]_{\ell, p}
\end{align*}
and
\begin{align*}
m + r'-1 = [a_k, \ldots, a_{s+1}, p-1,\ldots,p-1, \ell-1]_{\ell, p}
\end{align*}
so $e_{m+2r'} = -1$.

On the other hand, suppose that $e_{m+2r} = -1$ and write
\begin{align*}
    m &= [ m_k, \ldots, m_{s+1}, m_s, 0,\ldots, 0]_{\ell,p}\\
    m + r &= [ a_k, \ldots, a_{s+1}, m_{s'},0,\ldots, 0]_{\ell,p}
\end{align*}
for some $s' > s$.
Then, since $m \triangleleft m + r - 1$, we must have that $m_i \le a_i$ and $m_{s'} \le a_{s'}-1$.  We note that
\begin{equation*}
    r = [a_k - m_k, \ldots, a_{s'} - m_{s'} - 1, p-m_{s'-1}, \ldots, p-m_s, 0,\ldots, 0]
\end{equation*}
so that
\begin{equation*}
    m + 2r = [2a_k - m_k, \cdots\cdots, p-m_s, 0,\ldots, 0].
\end{equation*}
Here we have ommitted the majority of the coefficient as we only require the least significant digit to calculate the reflection over the wall below.  Here, we see that for $t = (p-m_s)p^{(s)}$, we seek $m + 2r' = m+2r-2t$ so that
\begin{align*}
m + r' = [a_k, \ldots, a_{s'}-1, p-1, \ldots, p-1, m_s, 0,\ldots, 0]
\end{align*}
and so $e_{m+2r', m} = 1$.

Thus, for a given $m$, we have paired off all the nonzero coefficients appearing in \cref{eq:dimension}.
If $m > m_n$, we thus have that $\dim S(n,m+2r) > \dim S(n,m+2r')$ and so the sum of each of these pairings contributes a non-negative term to the sum.  In particular, if $m'$ is the reflection of $m$ over the wall above $m$, we have shown
\begin{align*}
    \dim D(n,m) &\ge \dim S(n,m) - \dim S(n,m')\\
    &\ge \dim S(n,m) - \dim S(n,m+2)\\
    &=\frac{n!}{r!(n - r + 2)!}\big(n - (m+2)^2 - 2\big)\\
    &\ge \binom{n}{r}\frac{1}{(n-r+1)(n-r+2)}.
\end{align*}
\end{proof}
Note that we have the same bound in both \Cref{prop:dim_bound_1} and \Cref{prop:dim_bound_2}.
\begin{figure}[htpb]
\begin{center}
  \includegraphics[width=0.8\textwidth]{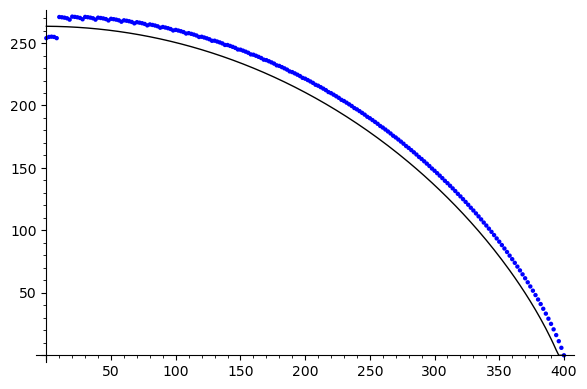}
\end{center}
\caption{The dimensions of the simple modules of $\TL_{400}$ under characteristic zero with $\ell = 10$ plotted on a logarithmic scale. The dimensions are plotted in blue and the bound given by the propositions is in black. Note that the bound fails for sufficiently small $m$.}%
\label{fig:sierpinski}
\end{figure}

\section{Applications and Prospects}\label{sec:applications}
We highlight a few applications of the above analysis of the Temperley-Lieb algebra over arbitrary fields.

\subsection{$p$-Kazhdan-Lusztig polynomials}
In~\cite{jensen_williamson_2015}, Jensen and Williamson define the $p$-canonical basis of a Hecke algebra of a crystallographic Coxeter system.
Elias shows that the category of Soergel Bimodules for a dihedral Coxeter group is equivalent to that of the Temperley--Lieb algebra.
Moreover, the rank of the ``local intersection form'' is exactly that of the Gram matrix mentioned in \cref{sec:cell-modules}.
Hence \Cref{prop:incremental_result_1} and \Cref{lem:dim_mod_m} compute the $p$-canonical basis of the Hecke algebra of the non-crystallographic dihedral groups for any $p$ and over any (not neccessarily integral) realisation.

\subsection{Steinberg Maps and $\ell$-Dilation}
If $n \equiv_\ell -1$ then
the cell modules indexed by $m \equiv_\ell -1$ obey a recursion relation.
Let $d_\ell(t) = (t+1)/\ell-1$.
\begin{theorem}
  The simple module $D(n, m')$ is a composition factor of $S(n,m)$ iff $D(d_\ell(n), d_\ell(m'))$ is a composition factor of $S(d_\ell(n), d_\ell(m))$.
\end{theorem}
\begin{proof}
  This is a simple application of \cref{thm:upper}.
\end{proof}

Let $\TL_n'\subseteq \TL_n$ be the direct sum of all the projective covers of such $S(n,m)$ in $\TL_n$.
Note that this is possibly not a single block (if, for example, the trivial module is projective), but is the sum of some number of blocks of $\TL_n$.
As such it is an associative algebra (although not a subalgebra of $\TL_n$ as its unit differs).

\begin{conjecture}
  $\TL_n'$ is Morita equivalent to $\TL_{d_\ell(n)}$.
\end{conjecture}

In the representation theory of $U_q(\mathfrak{sl}_2)$, there is a Steinburg functor on modules.
It is currently unclear what the translation of this functor to the Temperley-Lieb theory is, however it is expected to relate the theory of $\TL_n$ to that of $\TL_{d_\ell(n)}$ when $\ell = p$.

\subsection{Jones-Wenzl idempotents}
The Jones-Wenzl idempotent is a celebrated element of $\TL^{\Q(\delta)}_n(\delta)$.
It is the unique idempotent $\JW_n$ such that $u_i\cdot \JW_n = 0$ for all $1\le i<n$.
Its existance is equivalent to the statement that the trivial $\TL^{\Q(\delta)}_n(\delta)$ module is projective.

As can be seen from \cref{fig:in}, this is no longer the case in characteristic zero, positive characteristic or mixed characteristic.
In fact, it only occurs if $\supp(n) = \{n\}$ --- that is if $n <\ell$ or $n = ap^{(i)}-1$ for $1\le a < p$.
If so, we can use idempotent lifting techniques to show that the idempotent defining the projective cover of the trivial is in fact the reduction of $\JW_n$.

This answers the question ``when is the Jones-Wenzl idempotent defined'' over mixed characteristic to be that where $n <\ell$ or $n = ap^{(i)}-1$ for $1\le a < p$.
Equivalently, by comparing with \cref{cor:nonzero_binoms} we see that this is equivalent to all the quantum binomials $\gaussianquant{n}{r}$ being nonzero for $0\le r\le n$.

This fact has been shown by Webster in the appendix of~\cite{elias_libedinsky_2017}, however his construction employs the use of linear endomorphisms of $U_q(\mathfrak{sl}_2)$ modules which do not exist in the Temperley-Lieb algebra.
This is the first proof of which the author is aware that is purely diagrammatic.

\section*{Acknowledgements}
\noindent
The author would like to thank Dr Stuart Martin for his support and Alexis Langlois--R\'emillard for comments on an earlier draft.
The author is supported by a DTP studentship from the Department of Pure Mathematics and Mathematical Sciences of the University of Cambridge funded by the Engineering and Physical Sciences Research Council.

%
%

\bibliographystyle{alpha}
\bibliography{all_cite}

\end{document}